\newlength{\defbaselineskip}
\DeclarePairedDelimiter{\dotp}{\langle}{\rangle}
\newcommand{\nonl}{\renewcommand{\nl}{\let\nl\oldnl}}
\def\1{\bm{1}}
\newcommand{\dom}{\textnormal{dom}\,}
\newcommand{\epi}{\textnormal{epi}\,}
\newcommand{\R}{\mathbb{R}}
\newcommand{\EE}{\mathbb{E}}
\newcommand{\RR}{\mathbb{R}}
\newcommand{\cX}{\mathcal{X}}
\newcommand{\bbe}{\mathbb{E}}
\newtheorem{thm}{Theorem}
\newtheorem{defn}[thm]{Definition}
\newtheorem{cor}[thm]{Corollary}
\newtheorem{rem}[thm]{Remark}
\newtheorem{lem}[thm]{Lemma}
\newtheorem{assum}{Assumption}
\newcommand{\algorithmfootnote}[2][\footnotesize]{%
  \let\old@algocf@finish\@algocf@finish
  \def\@algocf@finish{\old@algocf@finish
    \leavevmode\rlap{\begin{minipage}{\linewidth}
    #1#2
    \end{minipage}}%
  }%
}
\patchcmd{\@maketitle}{\LARGE}{\Large}{}{}
\providecommand{\keywords}[1]
{
  \small	
  \textbf{\textit{Keywords---}} #1
}
\begin{document}

\title{Adaptive First-and Zeroth-order Methods
for Weakly Convex Stochastic Optimization Problems
}

	\vspace{0.8cm}
\author{{Parvin Nazari}\thanks{Department of Mathematics \& Computer Science, Amirkabir University of Technology, Email: \texttt{p$\_$nazari@aut.ac.ir}}\and{Davoud Ataee Tarzanagh }\thanks{ Department of Mathematics \& UF Informatics Institute, University of Florida, Email: \texttt{tarzanagh@ufl.edu}
}
\and{George Michailidis}\thanks{ Department of Statistics \& UF Informatics Institute, University of Florida, Email: \texttt{gmichail@ufl.edu}
}
}

\date{}
	\maketitle
	
\title{}
	\maketitle

\begin{abstract}
In this paper, we design and analyze a new family of adaptive subgradient methods for solving an important class of weakly convex (possibly nonsmooth) stochastic optimization problems. Adaptive methods which use exponential moving averages of past gradients to update search directions and learning rates have recently attracted a lot of attention for solving optimization problems that arise in machine learning. Nevertheless, their convergence analysis almost exclusively requires smoothness and/or convexity of the objective function. In contrast, we establish non-asymptotic rates of convergence of first and zeroth-order adaptive methods and their proximal variants for a reasonably broad class of nonsmooth \& nonconvex optimization problems. Experimental results indicate how the proposed algorithms empirically outperform stochastic gradient descent and its zeroth-order variant for solving such optimization problems.
\end{abstract}

\keywords{Adaptive methods, subgradient, weakly convex, Moreau envelope.}

\section{Introduction}

Stochastic first-order methods are of core practical importance for solving numerous optimization problems including training deep learning networks. Standard stochastic gradient descent (\textsc{Sgd}) \cite{robbins1985stochastic} has become a widely used technique for the latter task. However, its convergence crucially depends on the tuning and update of the learning rate over iterations in order to control the variance of the gradient (due to the use of a small mini-batch for approximating the full gradient) in the stochastic search directions, especially for nonconvex functions. To overcome this issue, several improved variants of \textsc{Sgd} that automatically update the search directions and learning rates using a metric constructed from the history of iterates have been proposed. The pioneering work in this line of research is \cite{jacobs1988increased}, while recent developments include \cite{duchi2011adaptive,zeiler2012adadelta,tielemandivide,kingma2014adam,dozat2016incorporating,reddi2018convergence,nazari2019dadam,nazari2019adaptive,luo2019adaptive}.

Exponential moving average (EMA)-type adaptive methods such as \textsc{RMSprop} \cite{tielemandivide} and \textsc{Adam} \cite{kingma2014adam}, which use exponential moving average of past gradients to update search directions and learning rates simultaneously, can achieve significantly better performance compared to \textsc{Sgd}. However, as shown in \cite{reddi2018convergence}, these techniques may not converge to stationary points when a constant mini-batch size is employed. To address this issue, there is recent work on modified variants of EMA-type algorithms in the nonconvex setting by \cite{zaheer2018adaptive,ward2018adagrad,nazari2019dadam,chen2018convergence,zhou2018convergence}.
However, to the best of our knowledge, all the aforementioned developments require explicitly the objective function to be smooth. Otherwise, it remains unclear whether EMA-type methods still converge or not.

Zeroth-order (gradient-free) optimization has received a lot of interest in diverse optimization and machine learning fields where explicit expressions of the gradients are hard or even infeasible to obtain. Recent examples include, but are not limited to, parameter inference of black-box systems \cite{fu2002optimization,lian2016comprehensive}, black-box classification and also generating adversarial examples from such systems \cite{chen2017zoo,chen2019zo}. Zeroth-order algorithms approximate the gradient with estimates that only utilize values of the objective function \cite{brent2013algorithms,shamir2017optimal,duchi2015optimal,nesterov2017random,shalev2012online}. Although many (stochastic) zeroth-order algorithms have recently been developed and analyzed \cite{balasubramanian2018zeroth, chen2019zo, liu2018signsgd, liu2017zeroth,liu2018zeroth}, they are mainly designed for convex and/or smooth settings, which limits their applicability to a wide range of nonsmooth \& nonconvex machine learning problems.
\begin{table}[t]
 \centering
\begin{minipage}[b]{0.97\linewidth}
  \resizebox{\columnwidth}{!}{
\begin{tabular}{|c|c|c|c|c|c|c|}
  \hline
 {Setting}  & {Algorithm}  & {Adaptive?}  & {Stationary Measure} & $\alpha_t $ &  $\mu$  & {Rate}  \\ \hline
 \addlinespace
\multicolumn{7}{c}{$M_\psi$-smooth loss function}
\\ \hline
 \multirow{2}*{First-order} & RSG  \cite{Ghadimi13:stochastic_nonconvex} & \textsc{No}   & $\EE[\| \nabla \psi(x_{t^*})\|^2]$ & $O(\min\{\frac{1}{M_\psi},\frac{1}{\sqrt{T}}\})$ & -- & $O(\frac{1}{\sqrt{T}})$ \\  \cline{2-7}
 & \textsc{Adam}  \cite{zaheer2018adaptive} & \textsc{Yes}& $\EE [\| \nabla \psi(x_{t^*})\|^2]$ & $\frac{\alpha}{2M_\psi}$ & --&$O(\frac{1}{T} +\frac{1}{b})$ \\  \hline
 \multirow{1}*{Zeroth-order} & RSGF  \cite{Ghadimi13:stochastic_nonconvex} & \textsc{No}  & $\EE[\| \nabla \psi(x_{t^*})\|^2]$& $O(\min\{ \frac{1}{dM_\psi},\frac{1}{\sqrt{dT}}\})$& $O(\frac{1}{d\sqrt{T}})$ & $O(\frac{\sqrt{d}}{\sqrt{T}}+\frac{d}{T})$ \\
 \hline
 \addlinespace
\multicolumn{7}{c}{$M_\psi$-smooth loss function + convex regularizer}
\\ \hline
  \multirow{1}*{First-order} & RSPG  \cite{ghadimi2016mini} &\textsc{No}  & $\EE[\| G_{\mathcal{X}}(x_{t^*})\|^2]$ &  $\frac{\alpha}{2M_\psi}$ & -- &$O(\frac{1}{\sqrt{T}}+ \frac{1}{b})$ \\  \hline
    \multirow{1}*{Zeroth-order}  & RSPGF  \cite{ghadimi2016mini} & \textsc{No}  & $ \EE [\| G_{\mathcal{X}}(x_{t^*})\|^2]$ & $\frac{\alpha}{2M_\psi}$  & $O(\frac{1}{\sqrt{dT}})$ &$O(\frac{d}{b} +\frac{d^2}{bT})$ \\   \hline \addlinespace
\multicolumn{7}{c}{$\rho$-weakly convex loss function+  convex regularizer} \\ \hline
 \multirow{2}*{First-order}  &PSG \cite{davis2019stochastic} & \textsc{No} &$\EE [\|\nabla \psi_{1/(2 \rho)}( x_{t^*})\|^2 ]$  & $O(\frac{1}{\sqrt{T}})$&--& $O(\frac{1}{\sqrt{T}})$  \\
 \cline{2-7}
 &  \textbf{\textsc{Fema} } &   \textsc{Yes}   & \textbf{$\EE [\|\nabla \psi_{1/(2 \rho),Q}(x_{t^*})\|^2 ]$} &  \textbf{  $O(\frac{1}{\sqrt{T}})$} & -- & \textbf{ $O(\frac{1}{\sqrt{T}})$ }  \\ \hline
 Zeroth-order &   \textbf{\textsc{Zema}}&   \textsc{Yes}   &  \textbf{$\EE [\|\nabla \psi_{1/(2 \rho),Q}(x_{t^*})\|^2 ]$} & \textbf{{$O(\frac{1}{\sqrt{T}})$}} & $O(\frac{d}{\sqrt{T}})$ & \textbf{$O(\frac{d^2}{\sqrt{T}})$} \\ \hline
 \end{tabular}
 } 
\end{minipage}
  \caption{Comparison of the representative stochastic first and zeroth-order algorithms for non-convex stochastic problems. Here, $T$ is the total number of iterations; $d$ is the dimension of $x$; $\alpha$ is a constant;  $M_\psi$ is the Lipschitz constant for $\nabla \psi(x)$; $b$ is the batch-size ; $\mu$ is the smoothing parameter; $\alpha_t$ is the stepsize at the $t^{th}$ iteration; $G_{\mathcal{X}}(x_t)=\alpha_t^{-1}\|x_t-x_{t-1}\|$ is the gradient mapping at $x_t$; and $\nabla \psi_{\zeta,Q}(x) $ is the scaled Moreau envelope for a given positive definite matrix  $Q$  (see, Subsection \ref{sec:morea}).
}\label{tab:1}
\end{table}

\paragraph{Contributions.}
This paper focuses on developing EMA-type methods for solving an important class of nonsmooth \& nonconvex  stochastic programming problems. The key technical contributions are:

\begin{itemize}
\item [(i)] \textit{A First-order EMA-type method (\textsc{Fema}).}
Developing a family of EMA-type methods, when access to stochastic subgradient information is assumed, and subsequently establishing that under mild technical conditions, each member of the family converges to first-order stationary points of the stochastic weakly convex objective function. The conditions provided are shown to aid convergence for certain stochastic algorithms that are not based on adaptive optimization \cite{davis2019stochastic,duchi2018stochastic}. In contrast to these results, we first define the scaled Moreau envelop as the stationary measure and subsequently derive the rate of convergence for each member of the proposed EMA-type methods, which depends explicitly on the adaptive moment estimate parameters.

\item [(ii)] \textit{A Zeroth-order EMA-type method (\textsc{Zema}).}
Providing the first sample complexity bounds for a number of popular adaptive zeroth-order algorithms for a reasonably broad class of nonsmooth \& nonconvex optimization problems and a novel adaptive zeroth-order algorithm, called \textsc{Zema}. Theorems~\ref{thm:stochastic_subzero} and \ref{thm:stochastic_sub2zero} establish convergence guarantees of \textsc{Zema} under certain useful parameter settings for solving weakly convex problems.

\item [(iii)] \textit{Guideline for selecting parameters of EMA-type methods.} Our analysis also provides practical suggestions for the parameters of EMA-type methods in the weakly convex setting. In contrast to
\cite{zaheer2018adaptive}, our theoretical analysis of EMA-type methods does not require big-batch stochastic gradients, and further establishes the convergence rate of first and zeroth order stochastic EMA-type methods to a broader class of objective functions.
\end{itemize}

\paragraph{Outline.}
In the next section, we introduce necessary definitions and present preliminary results on subdifferentials and the Moreau envelope. Section~\ref{sec1} provides a detailed description of the proposed adaptive methods and establishes theoretical guarantees for such methods for solving weakly convex problems. Section~\ref{Experiments} presents numerical experiments to assess the performance of the proposed algorithms, while some concluding remarks are drawn in Section~\ref{conclusion}.


\section{Mathematical preliminaries and notations}

\subsection{Notation}Throughout, $\Bbb{R}_+$ and $\Bbb{R}^d$ denote the sets of nonnegative real numbers and real coordinate space of $d$ dimensions, respectively. We let $\mathcal{S}^d_{++} $ denote the set of all symmetric positive definite $d\times d$ matrices. The minimum and maximum eigenvalues of the matrix $Q$ are denoted by $\lambda_{\min}(Q)$ and $\lambda_{\max}(Q)$, respectively. For any vectors $a,b\in\Bbb{R}^d$, we use $\sqrt{a}$ to denote element-wise square root, $a^2$ to denote element-wise square, $a/b$ to denote element-wise division, $\max(a,b)$
to denote element-wise maximum and $ \langle a , b \rangle$ to denote the standard Euclidean inner product. For any positive integers $d$ and $T$, we set $[d]:=\{1,\ldots,d\}$ and $(T]:=\{0,\ldots,T\}$. Further, for any vector $x_t\in \Bbb{R}^d$, $(x_{t})_i$ denotes its $i^{th}$ coordinate where $i\in[d]$.
We use $\|\cdot\|$, $\|\cdot\|_1$ and $\|\cdot\|_{\infty}$ to denote the $\ell_2$-norm, $\ell_1$-norm and the infinity norm, respectively. We let $\text{diag}(x)$ denote the diagonal matrix with diagonal entries $x_1, \ldots , x_d$. We also let $\vec{\mathbf{1}}$ denote all-ones vector.

For any matrix $Q\in \mathcal{S}^d_{++}$ and any set $\mathcal{X}$, $\Pi_{\mathcal{X},Q}~[x]$ denotes the scaled Euclidean projection of a vector $x$ onto $\mathcal{X}$, i.e.,
\begin{equation}\label{proj}
\Pi_{\mathcal{X},Q}~\big[x\big]=\arg\min_{y\in \mathcal{X}} \|Q^{1/2}(x-y)\|^2.
\end{equation}
Given two sequences $\{a_n\}$ and $\{b_n\},$ we write $a_n=O(b_n)$ if there exists a constant $0<c<+\infty$ such that $a_n\leq c b_n$. The expectation conditioned on all the realizations $\xi_0,\xi_1,\ldots, \xi_{t-1} \sim P$ is denoted by $\mathbb{E}_{t}[\cdot]$.

\subsection{Subdifferentials, weak convexity and scaled Moreau envelope} \label{sec:morea}

In this section, we will present some preliminaries, describing the setup and reviewing some background material that will be useful in the sequel.

\begin{defn}[domain and epigraph]\textnormal{\cite{rockafellar1970convex}} For a function $\psi:\RR^d \rightarrow \RR \cup \{+\infty\}$, let
\begin{equation}\label{eqn:epi}
  \dom \psi:=\{x\in \Bbb{R}^d:\psi(x)<+\infty\},\quad \textnormal{and} \quad
  \epi \psi := \{(x, z)\in \RR^d \times \RR \mid \psi(x) \leq z\}.
\end{equation}
Then, a function $\psi$ is said to be \emph{closed} if $\epi \psi$ is a closed set. A function $\psi$ is called \emph{proper} if $\dom~\psi \neq \emptyset$, i.e., a function is proper if its effective domain is nonempty and it never attains $+\infty$.
\end{defn}

\begin{defn} [Lipschitz continuity and smoothness] A function $\psi$ is $L_\psi$-Lipschitz continuous if, for some $L_\psi \in \R_{+}$,
$$
|\psi(x) - \psi(y) | \leq L_\psi \|x-y\|, ~~ \forall  ~~x,y \in \dom \psi,
$$
Further, a function $\phi$ is $M_\phi$-smooth if its gradient $\nabla \phi $ is $M_\phi$-Lipschitz continuous.
\end{defn}

\begin{defn}[subdifferential]\textnormal{\cite{rockafellar1970convex}}
For a function $\psi \colon \RR^d \rightarrow \RR\cup \{+\infty\} $ and a point $x \in \dom~ \psi$, we let
\begin{align*}  \label{eqn:weak_subdiff}
\partial \psi(x) =  \big\{ w \in \RR^d \mid  \; \psi(y) \geq \psi(x) + \dotp{w, y-x} + o(\| Q^{1/2}(y - x)\|), \quad \textnormal{as} \quad y\rightarrow x \big\}
\end{align*}
denote the \textit{Fr\'{e}chet subdifferential} of $\psi$ at $x$. We set $\partial \psi(x) = \emptyset$ if $x \notin \dom~\psi$.
\end{defn}


Next, we introduce the the concept of $(\rho,Q)$-weakly convex and discuss some important properties of this class of nonconvex (possibly nonsmooth) functions.

\begin{defn} [convexity and weak convexity] \label{def:weak}
A function $\psi \colon \RR^d \rightarrow \RR\cup \{+\infty\} $ is called convex if, for all $x,y \in  \RR^d $ and $ t \in [0, 1]$,
$$
\psi\big(tx+(1-t)y\big) \leq t\psi(x) + (1-t)\psi(y).$$
A function $\psi$ is said to be $(\rho,Q)$-weakly convex if, for some $\rho\in \R $ and some $Q\in \mathcal{S}^d_{++}$, the function $x\mapsto \psi(x)+\frac{\rho}{2}\|Q^{1/2}x\|^2$ is convex.
\end{defn}
In the case when $Q=I$, $\psi(x)$ is called $\rho$-weakly convex \cite{vial1983strong}. It is worth mentioning that weak convexity is a special yet very common case of nonconvex functions, which contains all convex (possibly nonsmooth) functions and Lipschitz smooth functions. For example, if $\psi$ is $M_\psi$-smooth, then it is $M_\psi$-weakly convex. We refer the reader to \cite{davis2019stochastic,duchi2018stochastic,davis2019proximally,zhang2018convergence} for some examples and applications.

%
%
The following lemma provides some useful properties of the $(\rho,Q)$-weakly convex functions. The proof is given in the appendix.

\begin{lem}[subdifferential characterization]\label{lem:weak:hyp}
Let $\psi:\Bbb{R}^d\rightarrow \Bbb{R}\cup \{+\infty\}$ be a closed function. Then, the following are equivalent:
\begin{enumerate}
  \item [\textnormal{(i)}] $\psi$ is $(\rho,Q)$-weakly convex for some $\rho\in \R$ and some $Q\in \mathcal{S}^d_{++}$.
  \item [\textnormal{(ii)}] For all $ x,y\in\Bbb{R}^d$ with $\varpi\in \partial \psi(x),$
  \begin{equation}\label{eqn:stronger_ineq}
    \psi(y)\geq \psi(x)+\langle \varpi,y-x \rangle-\frac{\rho}{2}\|Q^{1/2}
    (y-x) \|^2.
  \end{equation}
  \item [\textnormal{(iii)}] The subdifferential map is hypomonotone. That is, for all $x,y\in \Bbb{R}^d$ with  $\omega \in \partial \psi(x)$ and $\varpi\in \partial \psi(y)$,
  \begin{equation}\label{eqn:stronger_ineq_hypo}
    \langle \omega-\varpi,x-y \rangle\geq -\rho \|Q^{1/2}(y-x)\|^2.
  \end{equation}
\end{enumerate}
\end{lem}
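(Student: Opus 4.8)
The plan is to route all three statements through the auxiliary function
\[
g(x) \;:=\; \psi(x) + \tfrac{\rho}{2}\|Q^{1/2}x\|^2 \;=\; \psi(x) + \tfrac{\rho}{2}\langle x, Qx\rangle ,
\]
which by Definition~\ref{def:weak} is exactly the object whose convexity statement (i) asserts, and which is again closed whenever $\psi$ is, since adding a finite continuous quadratic preserves both lower semicontinuity and properness. Two elementary facts are used repeatedly. First, because $Q\in\mathcal{S}^d_{++}$, the quantities $\|Q^{1/2}(y-x)\|$ and $\|y-x\|$ are comparable, so the $o(\|Q^{1/2}(y-x)\|)$ term in the definition of $\partial\psi$ coincides with the usual $o(\|y-x\|)$, and $\partial\psi$ is the ordinary Fr\'echet subdifferential. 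Second, since $x\mapsto\tfrac{\rho}{2}\langle x,Qx\rangle$ is smooth with gradient $\rho Qx$, the exact sum rule $\partial g(x)=\partial\psi(x)+\rho Qx$ holds for every $x$; in particular, statement (i) holds if and only if $g$ is convex.

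Granting this reduction, I would prove (i)$\Rightarrow$(ii) by direct computation: if $g$ is convex and $\varpi\in\partial\psi(x)$, then $\varpi+\rho Qx\in\partial g(x)$, so the convex subgradient inequality for $g$ gives, for all $y$,
\[
\psi(y)+\tfrac{\rho}{2}\langle y,Qy\rangle \;\ge\; \psi(x)+\tfrac{\rho}{2}\langle x,Qx\rangle + \langle \varpi+\rho Qx,\,y-x\rangle .
\]
Moving the quadratic terms to the right and collecting them,
\[
\tfrac{\rho}{2}\langle x,Qx\rangle - \tfrac{\rho}{2}\langle y,Qy\rangle + \rho\langle Qx,\,y-x\rangle \;=\; -\tfrac{\rho}{2}\langle y-x,\,Q(y-x)\rangle \;=\; -\tfrac{\rho}{2}\|Q^{1/2}(y-x)\|^2 ,
\]
which is precisely \eqref{eqn:stronger_ineq}. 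For (ii)$\Rightarrow$(iii) I would apply \eqref{eqn:stronger_ineq} once with subgradient $\omega\in\partial\psi(x)$ and test point $y$, once with $\varpi\in\partial\psi(y)$ and test point $x$, and add the two inequalities; the function values $\psi(x),\psi(y)$ cancel and one is left with $\langle\omega-\varpi,\,x-y\rangle\ge-\rho\|Q^{1/2}(y-x)\|^2$, i.e.\ \eqref{eqn:stronger_ineq_hypo}.

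The implication (iii)$\Rightarrow$(i) is the only substantive one. The first step is to observe that \eqref{eqn:stronger_ineq_hypo}, together with the sum rule, says exactly that $\partial g$ is a monotone operator: for $u=\omega+\rho Qx\in\partial g(x)$ and $v=\varpi+\rho Qy\in\partial g(y)$,
\[
\langle u-v,\,x-y\rangle \;=\; \langle\omega-\varpi,\,x-y\rangle + \rho\|Q^{1/2}(x-y)\|^2 \;\ge\; 0 .
\]
The second step is to invoke the classical characterization that a proper, lower semicontinuous function on $\R^d$ whose Fr\'echet subdifferential is monotone is necessarily convex; applied to $g$, this yields (i). I expect this last step to be the main obstacle: unlike the other implications, which are finite algebraic manipulations, it rests on a genuine monotonicity-to-convexity (equivalently, a subgradient mean value) argument, and one must be careful to quote it in the form valid for the Fr\'echet subdifferential of an arbitrary closed function rather than the more familiar versions stated only for $C^1$ or locally Lipschitz functions.
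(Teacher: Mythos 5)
Your proposal is correct and follows essentially the same route as the paper: both pass to the shifted function $\psi(x)+\tfrac{\rho}{2}\|Q^{1/2}x\|^2$, use the sum rule to translate subgradients, prove (i)$\Rightarrow$(ii) via the convex subgradient inequality plus the quadratic identity, (ii)$\Rightarrow$(iii) by adding two instances of \eqref{eqn:stronger_ineq}, and (iii)$\Rightarrow$(i) by observing that the shifted subdifferential is globally monotone and invoking the monotonicity-implies-convexity theorem (the paper cites \cite[Theorem 12.17]{rockafellar2009variational}, exactly the result you flag as the substantive ingredient). No meaningful difference in approach.
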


Next, we give the definition and some important properties of the scaled Moreau envelope function and the associated scaled proximal mapping.

\begin{defn}[Moreau envelope and proximal mapping]
 For a proper, lower semicontinuous function $\psi:\RR^d \rightarrow \RR \cup \{+\infty\}$, a positive constant $\zeta$ and $Q\in \mathcal{S}^d_{++}$,  the scaled Moreau envelope and the scaled proximal mapping are defined respectively as
\begin{equation}\label{def:moreau}
 \psi_{\zeta,Q}(x):=\min_{y }~ \left\{\psi(y)+\tfrac{1}{2\zeta}\|Q^{1/2}(x-y)\|^2\right\} ,
\end{equation}
	\begin{equation}\label{prox}
	\textnormal{prox}_{\zeta \psi,Q}(x):=\arg\min_{y}\, \left\{\psi(y)+\tfrac{1}{2\zeta}\|Q^{1/2}(x-y)\|^2\right\}.
	\end{equation}
In the case when $Q=I$, \eqref{def:moreau} and \eqref{prox} reduce to the (unscaled) Moreau envelope and proximal mapping which are denoted by $ \psi_{\zeta}(x) $  and $\textnormal{prox}_{\zeta \psi}(x)$, respectively.
\end{defn}

The Moreau envelope and the proximal mapping play crucial roles in numerical analysis and optimization both theoretically and computationally \cite{attouch1984variational,rockafellar1970convex,parikh2014proximal}. It can be easily shown that the set of minimizers of $  \psi_{\zeta,Q}(x)$ coincides with the set of minimizers of $ \psi(x)$. The scaled proximal mappings share some key properties with the conventional ones \cite{lee2014proximal}. We collect a few of them in the following lemma.
\begin{lem} [properties of proximal mapping] \label{mc}
Let $\psi:\Bbb{R}^d\rightarrow \Bbb{R}\cup \{+\infty\}$ be a $(\rho,Q)$-weakly convex function. Then, for all $\zeta\in (0,\rho^{-1})$, and  $x\in \Bbb{R}^d$ the following hold:
\begin{itemize}
\item [\textnormal{(i)}] $\textnormal{prox}_{\zeta \psi,Q}(x)$ exists and is unique.
\item [\textnormal{(ii)}] $\textnormal{prox}_{\zeta \psi,Q}(x)$ is nonexpansive. That is, for all $y\in \Bbb{R}^d$,
  \begin{equation*}
  \|Q^{1/2}\big(\textnormal{prox}_{\zeta \psi,Q}(x)-\textnormal{prox}_{\zeta \psi,Q}(y)\big)\|^2 \le \|Q^{1/2}(x-y)\|^2.
  \end{equation*}
\end{itemize}
\begin{proof}
Since the function $\psi(y)+\tfrac{1}{2\zeta}\|Q^{1/2}(x-y)\|^2$ is strongly convex for any $\zeta\in (0,\rho^{-1})$, (i) follows.  To show (ii), we write $u=\textnormal{prox}_{\zeta \psi,Q}(x)$ and $ v= \textnormal{prox}_{\zeta \psi,Q}(x)$. Then,
$$
  \langle (u-v), Q (x-y) \rangle \geq \|Q^{1/2}(u-v)\|^2.
$$
The above inequality together with Cauchy-Schwarz inequality \footnote{ $\forall a,b: ~~ 2 \dotp{a,b} \leq \|a\|^2 +\|b\|^2 $} implies (ii).
\end{proof}
\end{lem}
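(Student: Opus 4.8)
The plan is to route both parts through the strongly convex surrogate $h_x(w) := \psi(w) + \tfrac{1}{2\zeta}\|Q^{1/2}(x-w)\|^2$, whose unique minimizer is, by definition \eqref{prox}, $\textnormal{prox}_{\zeta \psi,Q}(x)$. For part (i), I would expand the quadratic and regroup $h_x$ as $\big[\psi(w) + \tfrac{\rho}{2}\|Q^{1/2}w\|^2\big] + \tfrac{1}{2}\big(\tfrac{1}{\zeta}-\rho\big)\|Q^{1/2}w\|^2 - \tfrac{1}{\zeta}\langle Qx, w\rangle + c$, where $c$ collects the $w$-independent terms. The bracketed term is convex by the very definition of $(\rho,Q)$-weak convexity, and the hypothesis $\zeta \in (0,\rho^{-1})$ forces $\tfrac{1}{\zeta}-\rho > 0$, so the middle term is a $Q$-positive-definite quadratic. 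Hence $h_x$ is proper, closed and $(\tfrac{1}{\zeta}-\rho)$-strongly convex, which guarantees a minimizer that is unique; this settles (i).

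For part (ii), set $u = \textnormal{prox}_{\zeta \psi,Q}(x)$ and $v = \textnormal{prox}_{\zeta \psi,Q}(y)$. First I would record the first-order optimality conditions $0 \in \partial h_x(u)$ and $0 \in \partial h_y(v)$; since the gradient of the quadratic in $w$ is $-\tfrac{1}{\zeta}Q(x-w)$, these become
\[
\tfrac{1}{\zeta}Q(x-u) \in \partial\psi(u), \qquad \tfrac{1}{\zeta}Q(y-v) \in \partial\psi(v).
\]
These are the two subgradients I would feed into the subdifferential characterization of Lemma~\ref{lem:weak:hyp}(iii), evaluated at the points $u$ and $v$. The target is the firm-nonexpansiveness inequality $\langle Q(x-y), u-v\rangle \ge \|Q^{1/2}(u-v)\|^2$. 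Granting it, the conclusion follows in one line: Cauchy--Schwarz in the $Q$-inner product bounds the left-hand side by $\|Q^{1/2}(x-y)\|\,\|Q^{1/2}(u-v)\|$, and cancelling one factor of $\|Q^{1/2}(u-v)\|$ (the case $u=v$ being trivial) yields $\|Q^{1/2}(u-v)\| \le \|Q^{1/2}(x-y)\|$, i.e.\ the claimed nonexpansiveness.

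The step I expect to be the main obstacle is precisely establishing the key inequality with coefficient one. Substituting the two optimality relations into the hypomonotonicity estimate of Lemma~\ref{lem:weak:hyp}(iii), namely $\langle \tfrac{1}{\zeta}Q(x-u) - \tfrac{1}{\zeta}Q(y-v),\, u-v\rangle \ge -\rho\|Q^{1/2}(u-v)\|^2$, and clearing the factor $\tfrac{1}{\zeta}$, gives $\langle Q(x-y), u-v\rangle \ge (1-\rho\zeta)\|Q^{1/2}(u-v)\|^2$. The delicate point is thus the passage from this $(1-\rho\zeta)$-weighted bound to the coefficient-one form on which nonexpansiveness rests; this is exactly where the monotonicity of $\partial\psi$ would be needed, so I would isolate and secure this inequality first, and only then assemble the Cauchy--Schwarz step above.
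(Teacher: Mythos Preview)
Your approach is essentially the paper's own: for (i), strong convexity of the surrogate $h_x$; for (ii), first-order optimality conditions combined with the subdifferential monotonicity property and then Cauchy--Schwarz. The paper's proof is terser but follows the same outline, simply asserting the key inequality $\langle Q(x-y),\,u-v\rangle \ge \|Q^{1/2}(u-v)\|^2$ and then invoking Cauchy--Schwarz.

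The obstacle you isolate is genuine and is not an artifact of your derivation. From hypomonotonicity (Lemma~\ref{lem:weak:hyp}(iii)) one obtains only $\langle Q(x-y),\,u-v\rangle \ge (1-\rho\zeta)\,\|Q^{1/2}(u-v)\|^2$, and no further manipulation will upgrade this to the coefficient-one form under the stated hypotheses. In fact the statement of (ii) is false for $(\rho,Q)$-weakly convex functions with $\rho>0$: take $d=1$, $Q=1$, and $\psi(w) = -\tfrac{\rho}{2}w^2$; then $\textnormal{prox}_{\zeta\psi}(x) = x/(1-\rho\zeta)$, which has Lipschitz constant $1/(1-\rho\zeta)>1$. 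The paper's proof glosses over this; the asserted inequality is exactly the firm-nonexpansiveness property of the proximal operator of a \emph{convex} function, and it does require $\rho\le 0$, i.e.\ monotonicity of $\partial\psi$, just as you suspected. What makes the paper internally consistent is that every invocation of Lemma~\ref{mc}(ii) in the subsequent theorems is to the projection $\Pi_{\cX,\widehat V_t^{1/2}}$ or to $\textnormal{prox}_{\alpha_t h,\widehat V_t^{1/2}}$ with $h$ convex, so the case $\rho=0$ suffices there and your bound collapses to the desired one.
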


The following lemma extends the result of \cite{rockafellar1970convex} to the $(\rho, Q)$-weakly convex functions and shows that the scaled Moreau envelope is differentiable, with a meaningful gradient. The proof is almost identical to that of \cite[Theorem 31.5]{rockafellar1970convex}.


\begin{lem}[smoothness of the envelope]\label{lem:moreau_grad} Let $\psi\colon\R^d\to \R\cup\{+\infty\}$ be a $(\rho,Q)$-weakly convex function. Then, for any  $\zeta\in (0,\rho^{-1})$, the scaled Moreau envelope $\psi_{\zeta,Q}$ is continuously differentiable at any point $x \in \dom~\psi$ with gradient given by
\begin{equation}\label{eqn:grad_form}
  \nabla \psi_{\zeta,Q}(x)=\zeta^{-1} Q(x- \textnormal{prox}_{\zeta \psi,Q}(x)).
\end{equation}
\end{lem}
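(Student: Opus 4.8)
The plan is to strip off the matrix $Q$ by a linear change of variables and then reduce to the classical (convex) Moreau-envelope smoothness result. Since $Q\in\mathcal{S}^d_{++}$, put $u=Q^{1/2}x$ and define $\tilde\psi(u):=\psi(Q^{-1/2}u)$. The map $u\mapsto\tilde\psi(u)+\frac{\rho}{2}\|u\|^2$ is the composition of the convex function $x\mapsto\psi(x)+\frac{\rho}{2}\|Q^{1/2}x\|^2$ with the linear bijection $u\mapsto Q^{-1/2}u$, hence convex, so $\tilde\psi$ is $\rho$-weakly convex in the usual (unscaled) sense. Substituting $v=Q^{1/2}y$ in \eqref{def:moreau} and \eqref{prox} shows $\psi_{\zeta,Q}(x)=\tilde\psi_\zeta\big(Q^{1/2}x\big)$ and $\textnormal{prox}_{\zeta\psi,Q}(x)=Q^{-1/2}\textnormal{prox}_{\zeta\tilde\psi}\big(Q^{1/2}x\big)$, where $\tilde\psi_\zeta$ and $\textnormal{prox}_{\zeta\tilde\psi}$ are the unscaled objects. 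Consequently, once one establishes the unscaled statement $\nabla\tilde\psi_\zeta(u)=\zeta^{-1}\big(u-\textnormal{prox}_{\zeta\tilde\psi}(u)\big)$ together with continuity of this map, the chain rule yields $\nabla\psi_{\zeta,Q}(x)=Q^{1/2}\nabla\tilde\psi_\zeta(Q^{1/2}x)=\zeta^{-1}Q\big(x-\textnormal{prox}_{\zeta\psi,Q}(x)\big)$, which is exactly \eqref{eqn:grad_form}. So it remains to treat the case $Q=I$ with $\psi$ (now) $\rho$-weakly convex.

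For that case I would complete the square. With $\beta:=\zeta^{-1}-\rho>0$ (well defined because $\zeta\in(0,\rho^{-1})$) and the closed, proper, convex function $g(v):=\psi(v)+\frac{\rho}{2}\|v\|^2$, a short algebraic rearrangement of $\psi_\zeta(x)=\min_v\big\{g(v)-\frac{\rho}{2}\|v\|^2+\frac{1}{2\zeta}\|x-v\|^2\big\}$ expresses $\psi_\zeta(x)$ as the convex Moreau envelope $g_{1/\beta}$ evaluated at the rescaled point $x/(\zeta\beta)$, plus an explicit smooth quadratic in $x$; crucially, completing the square does not move the minimizer, so $\textnormal{prox}_{\zeta\psi}(x)$ coincides, after this reparametrization, with $\textnormal{prox}_{(1/\beta)g}(x/(\zeta\beta))$. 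Since $g$ is convex, the standard fact (essentially \cite[Theorem~31.5]{rockafellar1970convex}) gives that $g_{1/\beta}$ is $C^1$ with gradient $\beta(\,\cdot-\textnormal{prox}_{(1/\beta)g})$; substituting back and collecting the quadratic term cancels everything except $\zeta^{-1}(x-\textnormal{prox}_{\zeta\psi}(x))$. Continuity of the gradient is then immediate from Lemma~\ref{mc}(ii) with $Q=I$, which makes $x\mapsto\textnormal{prox}_{\zeta\psi}(x)$ nonexpansive, hence continuous.

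A more self-contained alternative avoids the convex reduction altogether: set $p:=\textnormal{prox}_{\zeta\psi}(x)$ (well defined by Lemma~\ref{mc}(i)) and $w:=\zeta^{-1}(x-p)$, which lies in $\partial\psi(p)$ by the optimality condition for \eqref{prox}. Testing the minimum in \eqref{def:moreau} at the point $p$ and expanding $\|x'-p\|^2$ yields the one-sided bound $\psi_\zeta(x')\le\psi_\zeta(x)+\langle w,x'-x\rangle+\frac{1}{2\zeta}\|x'-x\|^2$ for every $x'$. For the matching lower bound one writes $p':=\textnormal{prox}_{\zeta\psi}(x')$, applies the weak-convexity inequality of Lemma~\ref{lem:weak:hyp}(ii) at $p$ with subgradient $w$ and test point $p'$, and expands $\|x'-p'\|^2$ about $x-p$; the $\langle w,p'-p\rangle$ terms cancel and the leftover cross term is absorbed by Cauchy--Schwarz, producing $\psi_\zeta(x')\ge\psi_\zeta(x)+\langle w,x'-x\rangle-C\|x'-x\|^2$ for a constant $C=C(\zeta,\rho)$. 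The two estimates together give $|\psi_\zeta(x')-\psi_\zeta(x)-\langle w,x'-x\rangle|=O(\|x'-x\|^2)$, so $\psi_\zeta$ is differentiable at $x$ with gradient $w$, and continuity follows as before. I expect the lower estimate to be the only genuine obstacle: one has to make sure the $\langle w,p'-p\rangle$ contributions really cancel and the residual cross term is dominated, so that the remainder is $o(\|x'-x\|)$; the change of variables, the upper estimate, and the final bookkeeping are routine.
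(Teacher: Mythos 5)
Your proposal is correct. The first route (diagonalize away $Q$ by the substitution $u=Q^{1/2}x$, then complete the square to write $\psi_\zeta$ as a convex Moreau envelope $g_{1/\beta}$ of $g=\psi+\tfrac{\rho}{2}\|\cdot\|^2$ evaluated at a rescaled point plus an explicit quadratic) is exactly the reduction the paper has in mind when it says the proof is ``almost identical to that of \cite[Theorem 31.5]{rockafellar1970convex}''; you have simply made the two reductions explicit, and the algebra does close up: with $\beta=\zeta^{-1}-\rho$ one gets $\nabla\psi_\zeta(x)=\tfrac{1}{\zeta\beta}\nabla g_{1/\beta}(x/(\zeta\beta))+(\tfrac{1}{\zeta}-\tfrac{1}{\zeta^2\beta})x=\zeta^{-1}(x-\textnormal{prox}_{\zeta\psi}(x))$ as claimed. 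Your second, self-contained route is genuinely different and also works: the upper estimate is immediate from testing the min at $p$, and in the lower estimate the identity $x'-p'=(x-p)+(x'-x)-(p'-p)$ makes the $\langle w,p'-p\rangle$ terms cancel exactly, leaving $\psi_\zeta(x')\ge\psi_\zeta(x)+\langle w,x'-x\rangle-\tfrac{\rho}{2}\|p'-p\|^2+\tfrac{1}{2\zeta}\|(x'-x)-(p'-p)\|^2$, after which a Lipschitz bound on the prox gives the $O(\|x'-x\|^2)$ remainder. One small caution on that last step: for a $\rho$-weakly convex $\psi$ with $\rho>0$ the proximal map is Lipschitz with constant $1/(1-\zeta\rho)$ rather than genuinely nonexpansive (the hypomonotonicity inequality gives $\langle x'-x,p'-p\rangle\ge(1-\zeta\rho)\|p'-p\|^2$), so you should not lean on Lemma~\ref{mc}(ii) literally as stated; but any finite Lipschitz constant suffices for your quadratic remainder, so the argument is unaffected. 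The direct route buys you independence from Rockafellar's convex machinery at the cost of a page of bookkeeping; the reduction route is shorter but imports the classical theorem.
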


Our convergence analysis of EMA-type methods leverages the scaled Moreau envelope's connections to \textit{scaled near stationarity}:\begin{eqnarray}\label{grad}
\nonumber
&& \|Q(\bar{x} -x)\| =\zeta\| \nabla \psi_{\zeta,Q}(x)\|,\\
\nonumber
&& \psi(\bar{x}) \leq \psi(x), \\
&& \text{dist}\big(0;\partial \psi(\bar{x})\big) \leq  \| \nabla \psi_{\zeta,Q}(x)\|,
  \end{eqnarray}
 where $\bar{x}=\textnormal{prox}_{\zeta \psi,Q}(x)$.

In the case when $Q=I$, these conditions reduced to the ones derived in \cite{davis2019stochastic}. The above conditions imply that any nearly stationary point $x$ of $\psi_{\zeta,Q}$ is close to a nearly stationary point $x$ of the original function $\psi(\cdot)$. Thus, in order to prove convergence of our algoithms to the stationary points, it is sufficient to show that $\nabla \psi_{\zeta,Q}(x)\rightarrow 0$. For further details and a historical account, we refer the interested reader to \cite{davis2019stochastic}.

\subsection{Review of adaptive gradient methods}\label{sec:revi}

Our algorithm is based on the EMA-type adaptive gradient algorithms
\cite{tielemandivide,kingma2014adam,reddi2018convergence} designed for minimization of a function $\psi(x)$ subject to the constraint $x \in \mathcal{X}$. We begin by describing the standard version of these algorithms, and then discuss the extensions for the weakly convex setting of interest in this paper.

Adaptive gradient algorithms generate a sequence $\{x_t\}_{t=0}^T$ of iterates of the form
\begin{equation*}
 x_{t+1} = \Pi_{\mathcal{X},\widehat{V}_t^{1/2}} \big[x_{t}- \alpha_t \widehat{V}_t^{-1/2} m_{t} \big],
\end{equation*}
where $\widehat{V}_t=\text{diag}(\widehat{\upsilon}_{t})$; $\mathcal{X}$ is a closed convex set; $\alpha_t$ is the stepsize; $m_t$ and $ \widehat{\upsilon}_{t}$ are \textit{averaging} functions relate to the historical gradients. Throughout, we refer $\alpha_t \widehat{V}_t^{-1/2}$ as learning rate. We note that the standard (stochastic) gradient descent falls within this framework by setting:
$$
m_t =g_t, \quad \text{and} \quad
\widehat{\upsilon}_{t}= \vec{\mathbf{1}}.
$$

One practical issue for the stochastic gradient descent is that in order to ensure convergence of it, the $\alpha_t$ has to decay to zero which leads to slower convergence. The key idea of adaptive methods is to choose averaging functions appropriately, so as to ensure good convergence. \textsc{Adagrad} \cite{duchi2011adaptive} uses
\begin{eqnarray*}
 m_t=g_t, \quad \text{and} \quad  \widehat{\upsilon}_{t}  = \frac{1}{t}\sum_{s=1}^{t}g_s^2,
\end{eqnarray*}
and stepsize $\alpha_t=\alpha/\sqrt{t}$ for all $t\in(T].$ This setting for function $m_t$ gives a modest stepsize decay of $\alpha/\sqrt{\sum_{s}g_{s,j}^2}$ for $j\in [d]$. Further, when the gradients are sparse, this setting gives significant improvement in the empirical performance of gradient-based methods. Although \textsc{Adagrad} works well in sparse convex settings, its performance appears to deteriorate in (dense) nonconvex settings. To tackle this issue, variants of \textsc{Adagrad} such as \textsc{Adam} and \textsc{RMSProp} have been proposed, which replace the sum of the outer products with an exponential moving average. In particular, \textsc{Adam} \cite{kingma2014adam} uses the following recursion formulas:
\begin{eqnarray*}
\nonumber
  m_{t} &=&\beta_{1,t}m_{t-1}+(1-\beta_{1,t})g_{t}, \\
 \widehat{\upsilon}_{t}  &=& \beta_2  \widehat{\upsilon}_{t-1}+(1-\beta_2)g^2_{t},
\end{eqnarray*}
where $\{\beta_{1,t}\}_{t=0}^T,\beta_2 \in [0,1)$ and $m_{-1} =\widehat{\upsilon}_{-1} = 0$. In the case when $\beta_{1,t} = 0$, \textsc{Adam} reduces  to \textsc{RMSProp} \cite{tielemandivide}. In practice the momentum term arising due to non-zero $\beta_{1,t}$ appears to significantly improves the performance.

More recently, the work \cite{reddi2018convergence} pointed out the convergence issues of \textsc{Adam} even in the convex setting and proposed \textsc{AMSGrad}, a corrected version of \textsc{Adam}. \textsc{AMSGrad} uses the following updates:
\begin{eqnarray*}
\nonumber
m_{t} &=&\beta_{1,t}m_{t-1}+(1-\beta_{1,t})g_{t}, \\
\nonumber
\upsilon_{t} &=& \beta_2\upsilon_{t-1}+(1-\beta_2)g^2_{t}, \\
 \widehat{\upsilon}_{t} &=& \max(\widehat{\upsilon}_{t-1} ,  \upsilon_{t}),
\end{eqnarray*}
where  $\{\beta_{1,t}\}_{t=0}^T,\beta_2 \in [0,1)$ and $m_{-1} =\upsilon_{-1}=\widehat{\upsilon}_{-1} = 0$.

We will mainly focus on \textsc{AMSGrad} algorithm due to this generality but our arguments also apply to \textsc{Adam} and other algorithms such as \textsc{Adagrad}.

\section{Algorithms and convergence analysis}\label{sec1}

In this section, we introduce a new class of EMA-type algorithms for solving composite problems of the form
\begin{equation} \label{eqn:problem_class}
\min_{x \in \R^d} \psi(x)= f(x) + h(x),
\end{equation}
where $\psi, f$ and $h$ satisfy the following assumption.
\begin{assum}\label{it1}
$f$ is $(\rho,Q)$-weakly convex for some $\rho \in \R$ and $Q= \textnormal{diag}(q) $ where $q \in \R^d_{++}$ ; $h$ is closed convex; and $\psi$ is bounded below over its domain, i.e., $\psi^*=\min_{x \in \R^d} \: \psi(x)$ is finite.
\end{assum}
We also assume that we only have access to noisy subgradients about the function $f(x)$. In particular, in the first-order setting, we assume that problem \eqref{eqn:problem_class} is to be solved by EMA-type adaptive methods which acquire the subgradients of $f$ via subsequent calls to a stochastic first-order oracle (\textit{SFO}) that satisfies the following assumption.
\begin{assum}\label{it2}
Let $(\Omega, \mathcal{F}, P)$ denote a probability space and equip $ \Bbb{R}^d$ with the Borel $\sigma$-algebra. Then, it is possible to generate i.i.d.\ realizations $\xi_0,\xi_1, \ldots \sim P$. Further, there is an open set $U$ containing $\dom h$ and a measurable mapping $G \colon U \times \Omega \rightarrow \RR^d$ satisfying  $$\EE_{\xi}[G(x,\xi)]\in \partial f(x), \quad  \textnormal{for all} \quad  x\in U. $$
\end{assum}
Assumption~\ref{it2} is standard in analysis of  stochastic subgradient methods \cite{davis2019stochastic,duchi2018stochastic} and it is identical to Assumptions (A1) and (A2) in \cite{davis2019stochastic} .

\subsection{First-order EMA-type method (\textsc{Fema})}
\label{sec:fema}

\begin{algorithm}[t]
\caption{First-order EMA-type method (\textsc{Fema})}\label{alg:subgradient}
\SetKwInput{Input}{input~}
\SetKwInput{Output}{output~}
\SetKwInput{optionone}{option~I~}
\SetKwInput{optiontwo}{option~II~}
\Input{ Initial point $x_0 \in \dom h$, number of iterations $T$,  stepsizes $\{\alpha_t\}_{t=0}^T\subset\R_+$, decay parameters $\{\beta_{1,t}\}_{t=0}^T, \beta_2, \beta_3 \in [0,1)$ and a vector $q \in \mathbb{R}_{+}^d$ satisfying Assumption~\ref{it1}\;}
\BlankLine
Initialize $m_{-1}= \upsilon_{-1}=0 $ and $\widehat{\upsilon}_{-1}=q$\;
\For{ $0\leftarrow t : T$ }{\label{forins}
Draw sample $\xi_{t}$ from $P$ \;
$g_t =  G(x_t,\xi_{t})$\;
$m_{t}= \beta_{1,t} m_{t-1}+(1-\beta_{1,t})g_{t}$\; 
$\upsilon_{t}= \beta_2\upsilon_{t-1}+(1-\beta_2) g_{t}^2$\;
 $\widehat{\upsilon}_{t}= \beta_{3} \widehat{\upsilon}_{t-1}+(1-\beta_{3}) \max (\widehat{\upsilon}_{t-1},\upsilon_t)$, and  $\widehat{V}_t =  \text{diag}(\widehat{\upsilon}_{t})$\;
 $ x_{t+1}=\textnormal{prox}_{\alpha_th, \widehat{V}_t^{1/2}}\big(x_{t} - \alpha_t\widehat{V}_t^{-1/2} m_t\big)$\label{m11} \;
}

\BlankLine
\Output{Choose $x_{t^*}$ from $\{x_t\}_{t=0}^{T}$ with probability $\mathbb{P}(t^*=t)=\alpha_t/\sum_{t=0}^T \alpha_t$.}
\end{algorithm}

Our approach, as given in Algorithm \ref{alg:subgradient}, is a new model-based adaptive method using EMA of past gradients and is inspired by \textsc{AMSGrad}. Similar to this method, \textsc{Fema} scales down the gradient by the square roots of EMA of past squared gradients. However, \textsc{Fema} takes a larger step toward the optimum in comparison to \textsc{AMSGrad} ($\beta_3=0$) and yet incorporates the intuition of slowly decaying the effect of previous gradients on the learning rate.

Next, a few remarks about the \textsc{Fema} method are in order: i) The vector operations such as square root and the maximum operators are taken elementwise;  ii) At iteration $t$, with input $x_t$, the $\emph{SFO}$ outputs a stochastic subgradient $G(x_t, \xi_{t})$, where $\xi_{t}$ and $G(x_t, \xi_{t})$ satisfy Assumption~\ref{it2} for all $t \in (T]$; iii) \textsc{Fema} includes well-known adaptive algorithms as special cases, including \textsc{Adagrad}, \textsc{Adam}, \textsc{RMSprop}, \textsc{AMSGrad}, and \textsc{Sgd} (see, Subsection \ref{sec:revi}).

We establish convergence analysis of \textsc{Fema}  under Assumptions~ \ref{it1} and \ref{it2} for solving stochastic composite problem \eqref{eqn:problem_class}. We first present a simplified argument for the case of $h$ being the indicator function of a nonempty closed convex set $\cX$. In particular, we provide convergence guarantees of Algorithm~\ref{alg:subgradient} for solving the constrained $(\rho,Q)$-weakly convex problems of the following form
\begin{equation}\label{eqn:target_constr}
\min_{x\in\cX}~  \psi(x)= f(x),
\end{equation}
where $\mathcal{X}$ is a nonempty closed convex set.

\begin{thm}[\textbf{projected \textsc{Fema}}]\label{Sthm:stochastic_sub}
Suppose Assumptions~\ref{it1}--\ref{it2} hold, $\|G(x,\cdot)\|_{\infty} \leq G_{\infty}$ and $\|x-y\|_{\infty} \leq D_{\infty}$ for all $x, y\in \cX$. Further, for all $ t \in (T]$, let
\begin{align*}
0 \leq \beta_{1,t}\leq\beta_1,~~~\{\beta_i\}_{i=1}^3  \in [0,1), ~~~\tau = \frac{\beta_1}{\sqrt{\beta_2}}<1, \quad \textnormal{and} \quad  \bar{\rho} >\rho.
\end{align*}
Then, for $x_{t^*}$ generated by Algorithm~\ref{alg:subgradient}, we have
\begin{eqnarray}\label{Projected}
&&\EE \left[\|\nabla \psi_{1/\bar \rho,\widehat{V}_{t^*}^{1/2}}(x_{t^*})\|^2\right] \leq
 \frac{ \bar \rho \Delta_{\psi}+ \bar \rho^2  (\sum_{t=0}^{T} \alpha_t^2 C_{1,t}+  C_{2,T} )}{(\bar \rho-\rho)(1-\beta_1)\sum_{t=0}^T \alpha_t },
\end{eqnarray}
where $\Delta_{\psi} = \psi_{1/\bar \rho,\widehat{V}_{-1}^{1/2}}(x_{0}) - \psi^*$,
\begin{align}\label{pj}
\nonumber
C_{1,t} &= \frac{  \sum_{k=0}^t\tau^{t-k} \EE [\|g_{k}\|_1]}{(1-\beta_1)\sqrt{(1-\beta_2)(1-\beta_3)}} , \quad \textnormal{and}\\
C_{2,T} &= \frac{D_{\infty}^2}{2} \big(\sum_{t=0}^T \beta_{1,t}^2 \EE[ \|\widehat{\upsilon}_{t}^{1/2} \|_1] +\EE[ \|\widehat{\upsilon}_{T}^{1/2} \|_1]\big).
\end{align}
\end{thm}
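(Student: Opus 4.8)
The plan is to run a descent-style argument on the scaled Moreau envelope $\Phi_t(x) := \psi_{1/\bar\rho,\widehat V_t^{1/2}}(x)$, tracking how its value changes along the iterates $x_t$ of Algorithm~\ref{alg:subgradient}. The key object is the scaled proximal point $\bar x_t := \textnormal{prox}_{\psi/\bar\rho,\widehat V_t^{1/2}}(x_t)$; by Lemma~\ref{lem:moreau_grad} we have $\nabla\Phi_t(x_t) = \bar\rho\,\widehat V_t^{1/2}(x_t - \bar x_t)$, so controlling $\|\nabla\Phi_t(x_t)\|$ amounts to controlling the weighted distance $\|\widehat V_t^{1/4}(x_t-\bar x_t)\|$. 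First I would upper bound $\Phi_{t}(x_{t+1})$ using the definition of the Moreau envelope with the \emph{suboptimal} choice $y = \bar x_t$ inside the $\min$, namely $\Phi_t(x_{t+1}) \le \psi(\bar x_t) + \tfrac{\bar\rho}{2}\|\widehat V_t^{1/4}(x_{t+1}-\bar x_t)\|^2$. I would then expand $\|\widehat V_t^{1/4}(x_{t+1}-\bar x_t)\|^2 = \|\widehat V_t^{1/4}(x_t-\bar x_t)\|^2 + 2\langle \widehat V_t^{1/2}(x_{t+1}-x_t),\, x_t-\bar x_t\rangle + \|\widehat V_t^{1/4}(x_{t+1}-x_t)\|^2$. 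The middle cross term is where the algorithm's update $x_{t+1} = \textnormal{prox}_{\alpha_t h,\widehat V_t^{1/2}}(x_t - \alpha_t \widehat V_t^{-1/2} m_t)$ enters: using the prox optimality / projection inequality (here $h = \iota_{\cX}$, so it is the scaled projection onto $\cX$), $\langle \widehat V_t^{1/2}(x_{t+1} - x_t + \alpha_t \widehat V_t^{-1/2} m_t),\, \bar x_t - x_{t+1}\rangle \ge 0$, which lets me replace the increment direction by $-\alpha_t m_t$ up to terms involving $\|x_{t+1}-x_t\|$.

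Next I would split $m_t$ into its ``true subgradient'' part and the EMA/noise parts. Write $g_t = G(x_t,\xi_t)$ with $\EE_t[g_t]\in\partial f(x_t)$, and $m_t = \beta_{1,t} m_{t-1} + (1-\beta_{1,t}) g_t$. The inner product $\langle \alpha_t g_t,\, x_t - \bar x_t\rangle$ is handled by weak convexity: since $f$ is $(\rho,Q)$-weakly convex and $\EE_t[g_t]\in\partial f(x_t)$, inequality \eqref{eqn:stronger_ineq} of Lemma~\ref{lem:weak:hyp} gives (in expectation) $\langle \EE_t[g_t], \bar x_t - x_t\rangle \le \psi(\bar x_t) - \psi(x_t) + \tfrac{\rho}{2}\|Q^{1/2}(\bar x_t - x_t)\|^2$ — this is the step that produces the gap $\psi(\bar x_t) - \psi(x_t)$ (which telescopes against $\Phi_{t-1}(x_t) \ge \psi(\bar x_t)$, giving the $\Delta_\psi$ term) and, crucially, the factor $(\bar\rho - \rho)$: after combining with the $+\tfrac{\bar\rho}{2}$ from the envelope bound and using $Q = \widehat V_{-1}$-type comparability of the weight matrices, the coefficient of $\|\widehat V_t^{1/4}(x_t-\bar x_t)\|^2$ becomes $-(\bar\rho-\rho)/(2\bar\rho)$ times $\|\nabla\Phi_t(x_t)\|^2$. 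The momentum contribution $\beta_{1,t}\langle \alpha_t m_{t-1}, x_t - \bar x_t\rangle$ I would control by unrolling the recursion so $m_t = \sum_{k}\big(\prod(\cdots)\big)(1-\beta_{1,k})g_k$, giving the geometric-sum weights $\sum_{k=0}^t \tau^{t-k}\|g_k\|$ with $\tau = \beta_1/\sqrt{\beta_2}<1$ after pulling in the $\widehat V_t^{-1/2}$ scaling (here $\widehat\upsilon_t \ge (1-\beta_3)(1-\beta_2)\sum \beta_2^{t-k}\beta_3^{\cdots} g_k^2$ from the $\widehat\upsilon$ recursion, which is what yields the $\sqrt{(1-\beta_2)(1-\beta_3)}$ denominators in $C_{1,t}$); these are the $\alpha_t^2 C_{1,t}$ error terms. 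The quadratic step-size terms $\|\widehat V_t^{1/4}(x_{t+1}-x_t)\|^2 \le \alpha_t^2 \|\widehat V_t^{-1/4} m_t\|^2$ also feed into $C_{1,t}$ via $\|m_t\|\le \sum\tau^{t-k}\|g_k\|$ and $\widehat V_t^{-1/2}$ bounds.

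Finally, I would handle the change of the envelope's \emph{own} weight matrix from $\widehat V_{t-1}$ to $\widehat V_t$: since $\{\widehat\upsilon_t\}$ is nondecreasing coordinatewise (that is the purpose of the $\max$ in the $\widehat\upsilon_t$ update), $\Phi_t(x)$ and $\Phi_{t-1}(x)$ differ in a controlled monotone way, and the bounded-domain hypotheses $\|x-y\|_\infty\le D_\infty$, $\|G\|_\infty\le G_\infty$ turn the accumulated discrepancy into the telescoping-and-bounded quantity $C_{2,T}$ involving $\sum_t \beta_{1,t}^2 \EE\|\widehat\upsilon_t^{1/2}\|_1 + \EE\|\widehat\upsilon_T^{1/2}\|_1$. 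Summing the per-iteration inequality over $t=0,\dots,T$, taking total expectation, telescoping the $\psi(\bar x_t)$/envelope terms down to $\Delta_\psi$, dividing by $(\bar\rho-\rho)(1-\beta_1)\sum_t\alpha_t$, and identifying $\EE[\|\nabla\Phi_{t^*}(x_{t^*})\|^2]$ with the $\alpha_t$-weighted average via the sampling rule $\mathbb P(t^*=t)=\alpha_t/\sum\alpha_t$ gives \eqref{Projected}. I expect the main obstacle to be the bookkeeping around the \emph{two} different weight matrices appearing simultaneously — the envelope weight $\widehat V_t^{1/2}$ and the weak-convexity weight $Q$ — and ensuring the momentum error, once rescaled by the data-dependent $\widehat V_t^{-1/2}$, collapses cleanly into the stated $C_{1,t}$ with exactly the constants $(1-\beta_1)\sqrt{(1-\beta_2)(1-\beta_3)}$; getting the $(\bar\rho-\rho)$ factor to survive with the right sign is the delicate algebraic point, but it follows from choosing $\bar\rho>\rho$ so that the negative quadratic term dominates.
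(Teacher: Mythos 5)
Your proposal follows essentially the same route as the paper's proof: descend on the scaled Moreau envelope via the suboptimal point $\bar x_t$, expand the squared weighted distance after the projection step, invoke weak convexity (Lemma~\ref{lem:weak:hyp}(ii)) on the expected subgradient to extract the $(\bar\rho-\rho)$ margin, absorb the momentum and the $\widehat V_{t-1}\to\widehat V_t$ drift into $C_{1,t}$ and $C_{2,T}$ using the geometric sums and the bounded-domain hypothesis, and finish by telescoping and the $\alpha_t$-weighted sampling rule. The only differences are cosmetic --- you use the projection's variational inequality where the paper uses nonexpansiveness, and you unroll $m_t$ directly where the paper applies Young's inequality and then cites Lemma~\ref{lem:spar} --- so the argument is correct and matches the paper.
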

\begin{proof}
Let $\bar x_t =\textnormal{prox}_{\psi/\bar \rho,\widehat{V}_t^{1/2}}(x_t)$. For the sequence $\{x_t\}_{t=0}^T$ generated by Algorithm~\ref{alg:subgradient}, we have
\begin{eqnarray}\label{myeqn:keydef}
\nonumber
   &&  \EE_t[ \psi_{1/\bar \rho,\widehat{V}_t^{1/2}}(x_{t+1})] \leq \EE_t[ f(\bar x_t) + \frac{\bar \rho}{2} \|\widehat{V}_t^{1/4}(x_{t+1} - \bar x_t)\|^2]\notag\\
&=& f(\bar x_t) + \frac{\bar \rho}{2}\EE_t[ \|\widehat{V}_t^{1/4}\big(\Pi_{\cX,\widehat{V}_t^{1/2}}(x_{t} - \alpha_t \widehat{V}_t^{-1/2}m_t) -  \Pi_{\cX,\widehat{V}_t^{1/2}}(\bar x_t) \big)\|^2] \nonumber\\
&\leq & f(\bar x_t) + \frac{\bar \rho}{2}\EE_t[ \|\widehat{V}_t^{1/4}\big(x_{t}  - \bar x_t- \alpha_t \widehat{V}_t^{-1/2}m_t\big)\|^2] \nonumber\\
\nonumber
&\leq & f(\bar x_t) + \frac{\bar \rho}{2} \EE_t[\|\widehat{V}_t^{1/4}(x_{t} - \bar x_t)\|^2] \\
&+& \bar \rho\alpha_t\EE_t[ \dotp{\bar x_t-x_t , m_t}] + \frac{\alpha_t^2\bar \rho}{2}\EE_t[ \|\widehat{V}_t^{-1/4}m_t\|^2],
\end{eqnarray}
where the first inequality follows from the definition of the scaled proximal map in \eqref{prox} and the second inequality uses Lemma~\ref{mc}(ii).

Now, for the third term on the right hand side of \eqref{myeqn:keydef}, it follows from the update rule of $m_t$ in Algorithm~\ref{alg:subgradient} that
\begin{subequations}
\begin{equation} \label{eq:thm012}
\EE_t[ \dotp{\bar x_t-x_t , m_t}] = \beta_{1,t}\dotp{\bar x_t-x_t , m_{t-1}}  + (1-\beta_{1,t})\EE_t[ \dotp{\bar x_t-x_t , g_t}].
\end{equation}
On the right hand side of this equality, we multiple and divide the first term by $ \alpha_t^{1/2} \widehat{V}_{t}^{-1/4}$ and then use the Cauchy-Schwarz inequality to obtain
\begin{eqnarray}\label{eq:thm12}
\nonumber
&& \dotp{ m_{t-1}, \beta_{1,t} (\bar x_t-x_t) }\\
\nonumber
&\leq & \frac{\alpha_t}{2}\EE_t[\|\widehat{V}_{t}^{-1/4}  m_{t-1}\|^2]+ \frac{\beta_{1,t}^2}{2\alpha_t} \EE_t[\|\widehat{V}_{t}^{1/4}(\bar{x}_t-x_t)\|^2]\\
&\leq & \frac{\alpha_t}{2}\|\widehat{V}_{t-1}^{-1/4}  m_{t-1}\|^2+ \frac{\beta_{1,t}^2}{2\alpha_t} \EE_t[\|\widehat{V}_{t}^{1/4}(\bar{x}_t-x_t)\|^2],
\end{eqnarray}
where the second inequality follows since $(\widehat{V}_{t})_{ii}\geq (\widehat{V}_{t-1})_{ii}$ for all $i \in [d]$.

Observe that
\begin{eqnarray}\label{eq:thm0012}
\nonumber
&& \EE_t[\|\widehat{V}_t^{1/4}(x_{t} - \bar x_t)\|^2] \\
\nonumber
&=& \|\widehat{V}_{t-1}^{1/4}(x_{t} - \bar x_t)\|^2+\EE_t[\|\widehat{V}_t^{1/4}(x_{t} - \bar x_t)\|^2] -\|\widehat{V}_{t-1}^{1/4}(x_{t} - \bar x_t)\|^2\\
\nonumber
&=&
\|\widehat{V}_{t-1}^{1/4}(x_{t} - \bar x_t)\|^2+\EE_t[\|\widehat{V}_t^{1/4}(x_{t} - \bar x_t)\|^2 -\|\widehat{V}_{t-1}^{1/4}(x_{t} - \bar x_t)\|^2] \\
&\leq &
\|\widehat{V}_{t-1}^{1/4}(x_{t} - \bar x_t)\|^2+\EE_t[\|(\widehat{V}_t^{1/4}-\widehat{V}_{t-1}^{1/4})(x_{t} - \bar x_t)\|^2].
\end{eqnarray}
\end{subequations}

Insert \eqref{eq:thm012}, \eqref{eq:thm12} and \eqref{eq:thm0012} in \eqref{myeqn:keydef} to get
\begin{eqnarray*}
\nonumber
&&  \EE_t[ \psi_{1/\bar \rho,\widehat{V}_t^{1/2}}(x_{t+1})]
 -\psi_{1/\bar \rho,\widehat{V}_{t-1}^{1/2}}(x_{t})
 \leq
   \bar \rho(1-\beta_{1,t}) \alpha_t \EE_t[ \dotp{\bar x_t-x_t ,g_t}]\\
   \nonumber
   &+& \frac{ \bar \rho}{2} \EE_t[ \|(\widehat{V}_{t}^{1/4}-\widehat{V}_{t-1}^{1/4})(x_{t} - \bar x_t)\|^2]
  + \frac{\beta_{1,t}^2 \bar \rho}{2} \EE_t[ \|\widehat{V}_{t}^{1/4}(x_{t} - \bar x_t)\|^2]\\
& +&  \frac{\bar \rho  \alpha_t^2 }{2} \big( \EE_t[ \|\widehat{V}_t^{-1/4}m_t\|^2] + \|\widehat{V}_{t-1}^{-1/4} m_{t-1}\|^2 \big).
\end{eqnarray*}
Let $\gamma_t = \EE_t[g_t]$ for all $t \in (T]$. Taking expectations on both sides of the above inequality w.r.t. $\xi_0,\xi_1,\ldots,\xi_{t-1}$ and using the law
of total expectation, we get
\begin{eqnarray}\label{eqn:total:low}
\nonumber
 && \EE[ \psi_{1/\bar \rho,\widehat{V}_T^{1/2}}(x_{T+1})] - \psi_{1/\bar \rho,\widehat{V}_{-1}^{1/2}}(x_{0})
  \leq \bar \rho \sum_{t=0}^T(1-\beta_{1,t}) \alpha_t \EE [ \dotp{\bar x_t-x_t ,\gamma_t}]\\
  \nonumber
  &+ & \frac{ \bar \rho}{2} \sum_{t=0}^T \EE[ \|(\widehat{V}_{t}^{1/4}-\widehat{V}_{t-1}^{1/4})(x_{t} - \bar x_t)\|^2]
 +\beta_{1,t}^2 \EE[ \|\widehat{V}_{t}^{1/4}(x_{t} - \bar x_t)\|^2] \\
&+&  \frac{\bar \rho}{2}  \sum_{t=0}^T \alpha_t^2 \big( \EE[ \|\widehat{V}_t^{-1/4}m_t\|^2] +  \EE[ \|\widehat{V}_{t-1}^{-1/4} m_{t-1}\|^2 ] \big).
\end{eqnarray}	
Next, we upper bound each sum on the R.H.S. of the the above inequality. To bound the last sum, it follows from Lemma~\ref{lem:spar} that
\begin{subequations}
\begin{eqnarray}\label{eq:fcccthmi1}
\nonumber
 && \frac{\bar{\rho}}{2}  \sum_{t=0}^{T}  \alpha_t^2   \big( \EE[ \|\widehat{V}_t^{-1/4}m_t\|^2] +  \EE[ \|\widehat{V}_{t-1}^{-1/4} m_{t-1}\|^2 ] \big) \\
& \leq &  \bar{\rho}  \sum_{t=0}^{T}  \frac{   \alpha_t^2  \sum_{k=0}^t{\tau}^{t-k} \EE[ \|{g_{k}}\|_1] }{{(1-\beta_1)\sqrt{(1-\beta_2)(1-\beta_3)}}} = \bar{\rho}   \sum_{t=0}^{T}  \alpha_t^2 C_{1,t},
\end{eqnarray}
where $ C_{1,t}$ is defined in \eqref{pj}.

To bound the second sum on the R.H.S. of \eqref{eqn:total:low}, we have that
\begin{eqnarray}\label{pp}
\nonumber
 &&  \frac{\bar \rho}{2} \sum_{t=0}^T \EE[ \|(\widehat{V}_{t}^{1/4}-\widehat{V}_{t-1}^{1/4})(x_{t} - \bar x_t)\|^2] + \beta_{1,t}^2 \EE[ \|\widehat{V}_{t}^{1/4}(x_{t} - \bar x_t)\|^2] \\
 \nonumber
  &\leq& \frac{\bar \rho }{2}  \sum_{t=0}^T  \big(\EE [\|\widehat{\upsilon}_{t}^{1/4}-\widehat{\upsilon}_{t-1}^{1/4} \|_1^2]  + \beta_{1,t}^2 \EE [\|\widehat{\upsilon}_{t}^{1/2}\|_1]\big) \|x_t - \bar{x}_t\|_{\infty}^2 \\
 \nonumber
 &\leq&  \frac{\bar \rho D_{\infty}^2}{2}  \sum_{t=0}^T  \EE [\sum_{i=1}^{d}  ((\widehat{\upsilon}_{t})_i^{1/4}-(\widehat{\upsilon}_{t-1})_i^{1/4})^2]  +  \beta_{1,t}^2 \EE [\|\widehat{\upsilon}_{t}^{1/4}\|_1] \\
 \nonumber
 &\leq &  \frac{\bar \rho D_{\infty}^2}{2} \sum_{t=0}^T \EE [\sum_{i=1}^{d} (\widehat{\upsilon}_{t})_i^{1/4}((\widehat{\upsilon}_{t})_i^{1/4}-(\widehat{\upsilon}_{t-1})_i^{1/4})]+ \beta_{1,t}^2 \EE [\|\widehat{\upsilon}_{t}^{1/2}\|_1]\\
&=&\frac{\bar \rho D_{\infty}^2}{2} \big(\EE[\| \widehat{\upsilon}_{T}^{1/2}\|_1]+ \sum_{t=0}^T \beta_{1,t}^2 \EE [\|\widehat{\upsilon}_{t}^{1/2}\|_1]\big) =\bar{\rho} C_{2,T},
\end{eqnarray}
where the first inequality follows from H\"{o}lder's inequality;  the second inequality uses the bounded domain assumption; the third inequality is obtained from the fact that $(\widehat{V}_{t})_{ii}\geq (\widehat{V}_{t-1})_{ii}$ for all $i \in [d]$; the first equality follows from the method of differences for telescoping series; and the last equality uses \eqref{pj}.

To bound the first sum on the R.H.S. of \eqref{eqn:total:low}, using Assumption~\ref{it1} and Lemma~\ref{lem:weak:hyp}(ii), we have
\begin{eqnarray}\label{fklm}
\nonumber
 && \sum_{t=0}^T (1-\beta_{1,t})\alpha_t \EE  [\dotp{\bar x_t- x_t  , \gamma_t }] \\
&\leq &  \sum_{t=0}^T (\beta_{1,t}-1)\alpha_t \EE [f(x_t) - f(\bar x_t) - \frac{\rho}{2}\|\widehat{V}_t^{1/4}(x_t - \bar x_t)\|^2].
\end{eqnarray}
\end{subequations}
By Assumption~\ref{it1}, the function $\psi$ defined in \eqref{eqn:problem_class} is bounded below over its domain, i.e., $\psi^*=\min_{x} \: \psi(x)$ is finite.  Now, substituting \eqref{eq:fcccthmi1}--\eqref{fklm}  into \eqref{eqn:total:low} and utilizing the fact that $\psi^*\leq \psi(x_{T+1})$, we get
\begin{eqnarray}\label{myeqn:main_ineqs}
&&\frac{1}{\sum_{t=0}^T \alpha_t}  \sum_{t=0}^T (1-\beta_{1,t})\alpha_t \EE [f(x_t) - f(\bar x_t) - \frac{\rho}{2}\|\widehat{V}_t^{1/4}(x_t - \bar x_t)\|^2]\notag\\
 &\leq&  \frac{1}{\bar \rho\sum_{t=0}^T \alpha_t}
\Big(\psi_{1/\bar \rho,\widehat{V}_{-1}^{1/2}}(x_{0}) - \psi^* +\bar{\rho} \sum_{t=0}^{T} \alpha_t^2 C_{1,t}
+ \bar{\rho} C_{2,T} \Big).
\end{eqnarray}
Further, by Assumption~\ref{it1}, the function $x\mapsto f(x)+\frac{\bar \rho}{2}\|\widehat{V}_{t^*}^{1/4}(x-x_{t^*})\|^2$ is strongly convex with the modulus $\bar \rho-\rho >0$. Hence, for $x_{t^*}$ generated by  Algorithm~\ref{alg:subgradient}, we have
\begin{eqnarray}\label{klks}
\nonumber && (1-\beta_{1,t^*})\left(f(x_{t^*}) - f(\bar x_{t^*}) - \frac{\rho}{2}\|\widehat{V}_{t^*}^{1/4}(x_{t^*} - \bar x_{t^*})\|^2\right)  \\
\nonumber &= &
(1-\beta_{1,t^*})\Big(\big(f(x_{t^*})+\frac{\bar \rho}{2}\|\widehat{V}_{t^*}^{1/4}(x_{t^*} - x_{t^*})\|^2\big) \\
\nonumber
&-& \big( f(\bar x_{t^*}) + \frac{\bar \rho}{2}\|\widehat{V}_{t^*}^{1/4}(x_{t^*} - \bar x_{t^*})\|^2\big)
 + \frac{\bar \rho - \rho}{2}\|\widehat{V}_{t^*}^{1/4}(x_{t^*} - \bar x_{t^*})\|^2\bigg)  \\
\nonumber
&\geq & (1-\beta_{1,t^*})(\bar \rho-\rho)\|\widehat{V}_{t^*}^{1/4}(x_{t^*} - \bar x_{t^*})\|^2 \\
&= & \frac{ (1-\beta_{1,t^*}) (\bar \rho-\rho)}{\bar \rho^2}\|\nabla \psi_{1/\bar \rho,\widehat{V}_{t^*}^{1/2}}(x_{t^*})\|^2,
\end{eqnarray}
where the last equality follows from \eqref{eqn:grad_form}.

Finally, we substitute the lower bound \eqref{klks} into the L.H.S. of \eqref{myeqn:main_ineqs} to obtain \eqref{Projected}.
\end{proof}

Theorem ~\ref{Sthm:stochastic_sub} implies that the convergence rate of \textsc{Fema} has a dependency on $\alpha_t$ and $\bar \rho$ for weakly convex problems. The following corollary shows how to choose $\alpha_t$  and  $\bar \rho$ appropriately in the projected setting.

\begin{cor}\label{Scor:complexity}
Under the same conditions as in Theorem~\ref{Sthm:stochastic_sub}, for all $t \in (T]$, let the parameters be set to
\begin{align*}
\beta_{1,t}=\beta_1\pi^{t-1},~~~\pi \in (0,1), ‍~~~\bar \rho=2\rho, \quad \textnormal{and} \quad \alpha_t =\frac{\alpha}{\sqrt{T+1}},
\end{align*}
for some $\alpha>0$. Then, for $x_{t^*}$ generated by Algorithm~\ref{alg:subgradient}, we have
\begin{align*}
\EE \left[\|\nabla \psi_{1/(2\rho),\widehat{V}_{t^*}^{1/2}}(x_{t^*})\|^2\right]
\leq O(\frac{d}{\sqrt{T}}).
\end{align*}	
\end{cor}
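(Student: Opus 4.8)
The plan is to specialize the bound \eqref{Projected} of Theorem~\ref{Sthm:stochastic_sub} to the prescribed parameters and show that every term in the numerator is $O(d)$ and, crucially, bounded uniformly in $T$, while the denominator grows like $\sqrt{T+1}$. Setting $\bar\rho=2\rho$ gives $\bar\rho/(\bar\rho-\rho)=2$ and $\bar\rho^2/(\bar\rho-\rho)=4\rho$, so the right-hand side of \eqref{Projected} becomes
\[
\frac{2\rho\,\Delta_\psi + 4\rho\bigl(\sum_{t=0}^T \alpha_t^2 C_{1,t} + C_{2,T}\bigr)}{(1-\beta_1)\sum_{t=0}^T \alpha_t}.
\]
Since $\alpha_t=\alpha/\sqrt{T+1}$ is constant in $t$, $\sum_{t=0}^T\alpha_t=\alpha\sqrt{T+1}$ and $\sum_{t=0}^T\alpha_t^2=\alpha^2$. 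Moreover $\Delta_\psi=\psi_{1/\bar\rho,\widehat V_{-1}^{1/2}}(x_0)-\psi^*\le \psi(x_0)-\psi^*$ (take $y=x_0$ in \eqref{def:moreau}), which is a finite constant independent of $T$ by Assumption~\ref{it1}. It remains to bound $\sum_{t=0}^T\alpha_t^2C_{1,t}$ and $C_{2,T}$ by a constant multiple of $d$.

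For the first term, I would use $\|g_k\|_\infty\le G_\infty$ to get $\EE[\|g_k\|_1]\le d\,G_\infty$, and exploit $\tau=\beta_1/\sqrt{\beta_2}<1$ so that $\sum_{k=0}^t\tau^{t-k}\le(1-\tau)^{-1}$. By the definition of $C_{1,t}$ in \eqref{pj} this yields the $t$-uniform bound $C_{1,t}\le d\,G_\infty\big/\bigl[(1-\beta_1)(1-\tau)\sqrt{(1-\beta_2)(1-\beta_3)}\bigr]$, hence $\sum_{t=0}^T\alpha_t^2C_{1,t}\le\alpha^2 d\,G_\infty\big/\bigl[(1-\beta_1)(1-\tau)\sqrt{(1-\beta_2)(1-\beta_3)}\bigr]=O(d)$.

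The main point needing care is $C_{2,T}$, which reduces to a uniform-in-$t$ bound on $\widehat\upsilon_t$. A short induction on the recursions of Algorithm~\ref{alg:subgradient} gives $\upsilon_t\preceq G_\infty^2\,\vec{\mathbf 1}$ (as $\upsilon_{-1}=0$ and $g_t^2\preceq G_\infty^2\vec{\mathbf 1}$), and then $\widehat\upsilon_t\preceq \max(\|q\|_\infty,G_\infty^2)\,\vec{\mathbf 1}=:\bar G^2\,\vec{\mathbf 1}$ (as $\widehat\upsilon_{-1}=q$ and each step is a convex combination of a dominated vector with $\max(\widehat\upsilon_{t-1},\upsilon_t)$). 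Consequently $\|\widehat\upsilon_t^{1/2}\|_1\le d\bar G$ for every $t$, and with $\beta_{1,t}^2=\beta_1^2\pi^{2(t-1)}$ and $\sum_{t=0}^T\pi^{2(t-1)}\le \pi^{-2}(1-\pi^2)^{-1}$, the definition of $C_{2,T}$ in \eqref{pj} gives
\[
C_{2,T}\le \frac{D_\infty^2}{2}\Bigl(d\bar G\sum_{t=0}^T\beta_{1,t}^2 + d\bar G\Bigr)\le \frac{D_\infty^2 d\bar G}{2}\Bigl(1+\frac{\beta_1^2}{\pi^2(1-\pi^2)}\Bigr)=O(d).
\]
Plugging these bounds into the displayed fraction, the numerator is $O(d)$ and the denominator is $(1-\beta_1)\alpha\sqrt{T+1}$, so $\EE[\|\nabla\psi_{1/(2\rho),\widehat V_{t^*}^{1/2}}(x_{t^*})\|^2]=O(d/\sqrt{T+1})=O(d/\sqrt T)$, as claimed. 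I expect the only genuinely non-routine step to be the monotone boundedness of $\widehat\upsilon_t$ and hence the $O(d)$ control of $\|\widehat\upsilon_t^{1/2}\|_1$; the rest is bookkeeping of geometric series and of $\ell_1$-norms of $d$-vectors with bounded entries.
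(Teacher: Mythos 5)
Your proposal is correct and follows essentially the same route as the paper: specialize the bound of Theorem~\ref{Sthm:stochastic_sub} with $\bar\rho=2\rho$ and constant $\alpha_t$, then show $C_{1,t}$ and $C_{2,T}$ are $O(d)$ uniformly in $t$ and $T$ via the $\ell_\infty$ bound on $g_t$, $\widehat\upsilon_t$ (the paper cites Lemma~\ref{lm:infty} where you re-derive it by induction) and geometric-series sums. Two harmless deviations: you skip the paper's final optimization over $\alpha$, which is unnecessary for the stated $O(d/\sqrt{T})$ conclusion since $\alpha$ is a fixed constant, and your bound $\sum_{t=0}^T\pi^{2(t-1)}\le \pi^{-2}(1-\pi^2)^{-1}$ is in fact cleaner than the paper's factor $\frac{1}{1-2\pi}$, which is only sensible for $\pi<1/2$.
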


\begin{proof}
Substituting $\beta_{1,t}=\beta_1\pi^{t-1}$ in  $C_{2,T}$ defined in \eqref{pj} and using Lemma~\ref{lm:infty}, we obtain
\begin{eqnarray} \label{eqn:bc23}
\nonumber
C_{2,T} &=&\frac{D_{\infty}^2 }{2}  \big(\sum_{t=0}^T  \beta_1^2 \pi^{2(t-1)}   \EE[\|\widehat{\upsilon}_{t}^{1/2}\|_1]  +  \EE[\|\widehat{\upsilon}_{T}^{1/2}\|_1]\big)\\
&\leq& \frac{d D_{\infty}^2 G_{\infty}}{2} \big(\frac{\beta_1^2}{1-2\pi} + 1\big)=: C_2.
\end{eqnarray}	
Further, for $C_{1,t}$ defined in \eqref{pj}, we have
\begin{eqnarray}\label{eqn:bc12}
\nonumber
 C_{1,t} &=&  \frac{ \sum_{k= 0}^t \tau^{t-k} \EE [\|{g_{k}}\|_1]}{ (1-\beta_1)\sqrt{(1-\beta_2)(1-\beta_3)} } \\
\nonumber
&\leq&  \frac{  \sum_{k= 0}^t \tau^{t-k} d G_{\infty}}{ (1-\beta_1)\sqrt{(1-\beta_2)(1-\beta_3)} }\\
 &\leq &  \frac{   d G_{\infty}}{(1-\tau) (1-\beta_1)\sqrt{(1-\beta_2)(1-\beta_3)} } =:C_1,
\end{eqnarray}
where the first inequality is obtained from Lemma~\ref{lm:infty} and the last inequality uses  $$\sum_{t=0}^{T} \tau^t \leq \frac{1}{1-\tau}, \qquad \text{where} \quad  \tau = \beta_1/\sqrt{\beta_2}<1. $$
Now, substituting \eqref{eqn:bc23} and \eqref{eqn:bc12} into \eqref{Projected} and using $\alpha_t=\frac{\alpha}{\sqrt{T+1}}$ and $\bar \rho=2\rho$, we get
\begin{eqnarray*}
&&\EE \left[\|\nabla \psi_{1/(2 \rho),\widehat{V}_{t^*}^{1/2}}(x_{t^*})\|^2\right] \leq  \frac{ 2\rho \Delta_\psi+4\rho^2 (\alpha^2 C_1+C_2) }{\rho (1-\beta_1)\alpha} \cdot \frac{1}{\sqrt{T+1}}.
\end{eqnarray*}
Minimizing the right-hand side of the above inequality
w.r.t. $\alpha$ yields the choice $\alpha=\sqrt{\frac{ \Delta_\psi+2\rho C_2}{2\rho C_1}}$. Thus,
\begin{align*}
\nonumber
\EE \left[\|\nabla \psi_{1/(2 \rho),\widehat{V}_{t^*}^{1/2}}(x_{t^*})\|^2\right] &\leq   \frac{ 4\sqrt{2\rho C_1}\sqrt{ \Delta_\psi+2 \rho C_2}}{(1-\beta_1)} \cdot \frac{1}{\sqrt{T+1}} \\
 &= O(\frac{d}{\sqrt{T}}).
\end{align*}
\end{proof}

\begin{rem}
The above corollary shows that \textsc{Fema} attains an $ O(d/\sqrt{T})$ rate of complexity which matches the ones obtained in \cite{davis2019stochastic,zaheer2018adaptive} for nonsmooth and smooth problems, respectively. However, in contrast to \cite{davis2019stochastic}, \textsc{Fema} uses adaptive learning rates which can significantly accelerate the convergence and improves the performance for sparse problems. Further, in comparison to results obtained in \cite{zaheer2018adaptive}, our analysis does not require batching while expand the class of functions for which the convergence rate of the stochastic gradient methods is known.
\end{rem}

\begin{rem}
It is worth mentioning that our assumption $\|G(x ; \xi)\|_{\infty} \leq G_{\infty}$ is slightly stronger than the assumption $\|G(x ; \xi)\| \leq G_{2}$ used in \cite{davis2019stochastic,duchi2018stochastic}. Indeed, since $\|\cdot\|_{\infty} \leq \|\cdot\| \leq \sqrt{d}\|\cdot\|_{\infty}$, the latter assumption implies the former with $G_\infty = G_2$. However, $G_\infty$ will be tighter than $G_2$ by a factor of $\sqrt{d}$ when each coordinate of $G(x; \xi)$ almost equals to each other. Furthermore, the assumption $\|G(x ; \xi)\|_{\infty} \leq G_{\infty}$  is crucial for the convergence analysis of adaptive subgradient
methods in the nonconvex setting and it has been widely considered in the literature; see, for instance \cite{zaheer2018adaptive,ward2018adagrad,nazari2019dadam,chen2018convergence,zhou2018convergence}.
\end{rem}

Next, we consider a more general composite problem \eqref{eqn:problem_class} and establish the convergence of Algorithm~\ref{alg:subgradient} for solving such stochastic problems. To do so, we first provide the following lemma.

\begin{lem}\label{slem:prox_ident}
For  $\bar{\rho} > \rho$, let
\begin{eqnarray*}
\bar x_t =\textnormal{prox}_{\psi/\bar \rho,Q}(x_t) \quad \textnormal{and} \quad
\bar{\gamma}_t =\EE_t[G(\bar{x}_t,\xi_t)]\in\partial f(\bar{x}_t).
\end{eqnarray*}
Then, under Assumption~\ref{it1}, we have
\begin{equation*}
\bar x_t = \textnormal{prox}_{\alpha_t h,Q}\big(\alpha_t \bar \rho x_t - \alpha_t Q^{-1}  \bar{\gamma}_t + (1-\alpha_t \bar \rho)\bar x_t\big).
\end{equation*}
\end{lem}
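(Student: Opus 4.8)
The plan is to prove the identity by matching first-order optimality conditions on its two sides. First I would unfold the scaled proximal map with parameter $\zeta=1/\bar\rho$, writing $\bar x_t=\arg\min_y\{f(y)+h(y)+\tfrac{\bar\rho}{2}\|Q^{1/2}(y-x_t)\|^2\}$. By Assumption~\ref{it1} and Lemma~\ref{lem:weak:hyp}, since $\bar\rho>\rho$ the map $y\mapsto f(y)+\tfrac{\bar\rho}{2}\|Q^{1/2}(y-x_t)\|^2$ is strongly convex, and adding the closed convex $h$ keeps the objective strongly convex; hence $\bar x_t$ is its unique minimizer (as in Lemma~\ref{mc}(i)) and is characterized by the inclusion
\[ 0 \in \partial f(\bar x_t) + \bar\rho\,Q(\bar x_t - x_t) + \partial h(\bar x_t), \]
obtained from the subdifferential sum rule (the quadratic is smooth, so it shifts the subdifferential pointwise by its gradient $\bar\rho Q(\bar x_t-x_t)$, while $h$ is convex). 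Reading this inclusion with the element of $\partial f(\bar x_t)$ it furnishes --- which is $\bar\gamma_t=\EE_t[G(\bar x_t,\xi_t)]$, automatically so when $f$ is differentiable and taken as the selected subgradient otherwise --- gives $\bar\rho\,Q(x_t-\bar x_t)-\bar\gamma_t\in\partial h(\bar x_t)$.

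Next I would verify directly that $\bar x_t=\textnormal{prox}_{\alpha_t h,Q}(z_t)$ with $z_t:=\alpha_t\bar\rho x_t-\alpha_t Q^{-1}\bar\gamma_t+(1-\alpha_t\bar\rho)\bar x_t$. Because $h$ is convex, $\textnormal{prox}_{\alpha_t h,Q}(z_t)$ is the unique minimizer of the strongly convex map $y\mapsto h(y)+\tfrac{1}{2\alpha_t}\|Q^{1/2}(z_t-y)\|^2$, hence characterized by $\tfrac{1}{\alpha_t}Q(z_t-y)\in\partial h(y)$ at $y=\bar x_t$. A one-line computation gives $z_t-\bar x_t=\alpha_t\big(\bar\rho(x_t-\bar x_t)-Q^{-1}\bar\gamma_t\big)$, so $\tfrac{1}{\alpha_t}Q(z_t-\bar x_t)=\bar\rho Q(x_t-\bar x_t)-\bar\gamma_t$, which lies in $\partial h(\bar x_t)$ by the inclusion derived above. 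Thus $\bar x_t$ satisfies the optimality condition defining $\textnormal{prox}_{\alpha_t h,Q}(z_t)$, and by uniqueness the two coincide --- the claim. The stepsize $\alpha_t$ appears in $z_t$ precisely so that this matches the proximal update of Algorithm~\ref{alg:subgradient}, which is what makes the identity useful later (it lets one compare $x_{t+1}$ with $\bar x_t$ through the nonexpansiveness of Lemma~\ref{mc}(ii)).

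The one genuinely delicate step is the subdifferential calculus in the first paragraph: one must justify that the Fr\'echet subdifferential of $f+h+\tfrac{\bar\rho}{2}\|Q^{1/2}(\cdot-x_t)\|^2$ at $\bar x_t$ equals $\partial f(\bar x_t)+\bar\rho Q(\bar x_t-x_t)+\partial h(\bar x_t)$, even though $f$ is only $(\rho,Q)$-weakly convex and possibly extended-valued. I would handle this by using $\bar\rho>\rho$ to regroup the objective as (a convex function built from $f$ via Lemma~\ref{lem:weak:hyp}) $+$ (a smooth quadratic) $+\,h$, so that the Fr\'echet subdifferential coincides with the convex subdifferential and the convex sum rule applies ($h$ being finite on the open neighborhood of $\dom h$ from Assumption~\ref{it2}); combined with the elementary fact that adding a smooth term shifts the subdifferential by its gradient, this yields the inclusion. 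After that, the rest is purely algebraic manipulation of optimality conditions, with no further estimates required.
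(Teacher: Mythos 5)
Your proposal is correct and follows essentially the same route as the paper: both derive the first-order optimality inclusion $\bar\rho Q(x_t-\bar x_t)\in\partial h(\bar x_t)+\bar\gamma_t$ from the definition of $\bar x_t$ as the minimizer of $f+h+\tfrac{\bar\rho}{2}\|Q^{1/2}(\cdot-x_t)\|^2$, and then rearrange it algebraically into the optimality condition characterizing $\textnormal{prox}_{\alpha_t h,Q}$ at the shifted point. Your extra care about the subdifferential sum rule and about which element of $\partial f(\bar x_t)$ the inclusion furnishes is a welcome refinement of a step the paper leaves implicit, but it does not change the argument.
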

\begin{proof}
The proof is similar to \cite[Lemma~3.2]{davis2019stochastic}.
By the definition of the scaled proximal map in \eqref{prox}, we have
\begin{equation*}\label{zaz}
  \bar{x}_t=\arg \min_{y}\{f(y)+h(y)+ \frac{\bar \rho\|{Q^{1/2}}(y-x_t) \|^2} {2} \}.
\end{equation*}
Now, by the optimality condition of the above minimization problem, we have
$$\bar \rho Q(x_t - \bar x_t)  \in \partial h(\bar x_t) + \bar{\gamma}_t,$$
which implies that
\begin{eqnarray*}
&& \alpha_t \bar \rho Q(x_t - \bar x_t)  \in \alpha_t\partial h(\bar x_t) + \alpha_t \bar{\gamma}_t
\\
&\iff& \alpha_t \bar \rho x_t - \alpha_t Q^{-1} \bar{\gamma}_t+ (1-\alpha_t \bar \rho)\bar x_t  \in \bar x_t+\alpha_t Q^{-1} \partial h(\bar x_t) \\
	&\iff& \bar x_t = \textnormal{prox}_{\alpha_t h,Q}\big(\alpha_t \bar \rho x_t - \alpha_t Q^{-1}  \bar{\gamma}_t + (1-\alpha_t \bar \rho)\bar x_t\big).
	\end{eqnarray*}
\end{proof}

\begin{thm}[\textbf{proximal \textsc{Fema}}]\label{Sthm:stochastic_sub2}
Suppose Assumptions~\ref{it1}--\ref{it2} hold, $\|G(x,\cdot)\|_{\infty} \leq G_{\infty}$ and $\|x-y\|_{\infty} \leq D_{\infty}$ for all $x, y\in \dom h$. Further, for all $ t \in (T]$, let
\begin{align*}
&& 0 \leq \beta_{1,t}\leq\beta_1, ~~~\{\beta_i\}_{i=1}^3  \in [0,1),   ~~~\tau = \frac{\beta_1}{\sqrt{\beta_2}}<1, ~~~ \bar \rho \in \big(\rho,2\rho\big],  \quad \textnormal{and} \quad 0<\alpha_t \leq \frac{1}{\bar \rho}.
\end{align*}
Then, for $x_{t^*}$ returned by Algorithm~\ref{alg:subgradient}, we have
\begin{eqnarray}\label{fd}
\nonumber \EE \left[\|\nabla \psi_{1/\bar \rho,\widehat{V}_{t^*}^{1/2}}(x_{t^*})\|^2\right]
\leq \frac{ \bar \rho \Delta_{\psi}+ \bar \rho^2 ( \sum_{t=0}^{T} \alpha_t^2 C_{1,t}+  C_{2,T} )}{(\bar \rho-\rho)\sum_{t=0}^T \alpha_t },
\end{eqnarray}
where $\Delta_{\psi}=\psi_{1/\bar \rho,\widehat{V}_{-1}^{1/2}}(x_{0}) - \psi^*$,
\begin{align}\label{prox:pj}
\nonumber
C_{1,t} &= \frac{2 \sum_{k=0}^t\tau^{t-k} \EE [\|g_{k}\|_1]}{(1-\beta_1)\sqrt{(1-\beta_2)(1-\beta_3)}} + \frac{dG_{\infty}}{\sqrt{(1-\beta_2)(1-\beta_3)}}+\frac{dG_\infty^2}{\lambda_{\min}(\widehat{V}_{t}^{1/2})} , \quad \textnormal{and}\\
C_{2,T} &= \frac{D_{\infty}^2}{2} (\EE[ \|\widehat{\upsilon}_{T}^{1/2} \|_1]+\sum_{t=0}^T\beta_{1,t}^2\EE[\|\widehat{\upsilon}_{t}^{1/2}\|_1 ]).
\end{align}
\end{thm}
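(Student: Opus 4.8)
The plan is to recycle the architecture of the proof of Theorem~\ref{Sthm:stochastic_sub}, but with the projection in line~\ref{m11} of Algorithm~\ref{alg:subgradient} reinterpreted as a scaled proximal step on $\alpha_t h$ and the trivial identity $\bar x_t=\Pi_{\cX,\widehat V_t^{1/2}}(\bar x_t)$ used there replaced by Lemma~\ref{slem:prox_ident}. I would fix $\bar x_t:=\textnormal{prox}_{\psi/\bar\rho,\widehat V_t^{1/2}}(x_t)$ and $\bar\gamma_t:=\EE_t[G(\bar x_t,\xi_t)]\in\partial f(\bar x_t)$, start from the envelope inequality $\psi_{1/\bar\rho,\widehat V_t^{1/2}}(x_{t+1})\le f(\bar x_t)+h(\bar x_t)+\tfrac{\bar\rho}{2}\|\widehat V_t^{1/4}(x_{t+1}-\bar x_t)\|^2$, and substitute the exact value $f(\bar x_t)+h(\bar x_t)=\psi_{1/\bar\rho,\widehat V_t^{1/2}}(x_t)-\tfrac{\bar\rho}{2}\|\widehat V_t^{1/4}(x_t-\bar x_t)\|^2$, which holds because $\bar x_t$ attains the envelope at $x_t$.

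The decisive step is the contraction estimate for $\|\widehat V_t^{1/4}(x_{t+1}-\bar x_t)\|^2$. Writing line~\ref{m11} as $x_{t+1}=\textnormal{prox}_{\alpha_t h,\widehat V_t^{1/2}}(x_t-\alpha_t\widehat V_t^{-1/2}m_t)$ and invoking Lemma~\ref{slem:prox_ident} with $Q=\widehat V_t^{1/2}$ to write $\bar x_t=\textnormal{prox}_{\alpha_t h,\widehat V_t^{1/2}}(\alpha_t\bar\rho x_t-\alpha_t\widehat V_t^{-1/2}\bar\gamma_t+(1-\alpha_t\bar\rho)\bar x_t)$, the nonexpansiveness of the scaled prox of the convex (hence $(0,\widehat V_t^{1/2})$-weakly convex) function $h$ furnished by Lemma~\ref{mc}(ii) yields
\[
\|\widehat V_t^{1/4}(x_{t+1}-\bar x_t)\|^2\le\big\|(1-\alpha_t\bar\rho)\,\widehat V_t^{1/4}(x_t-\bar x_t)-\alpha_t\widehat V_t^{-1/4}(m_t-\bar\gamma_t)\big\|^2,
\]
where the appearance of the contraction factor $1-\alpha_t\bar\rho$ and of the extra drift $\bar\gamma_t$ is exactly what distinguishes the composite case. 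Expanding, using $\langle\widehat V_t^{1/4}u,\widehat V_t^{-1/4}v\rangle=\langle u,v\rangle$ and $0<\alpha_t\bar\rho\le1$ (so that $(1-\alpha_t\bar\rho)^2-1\le-\alpha_t\bar\rho$), and combining with the previous paragraph, I obtain a one‑step bound of the form
\[
\psi_{1/\bar\rho,\widehat V_t^{1/2}}(x_{t+1})-\psi_{1/\bar\rho,\widehat V_t^{1/2}}(x_t)\le -\tfrac{\bar\rho^2\alpha_t}{2}\|\widehat V_t^{1/4}(x_t-\bar x_t)\|^2+\bar\rho\alpha_t(1-\alpha_t\bar\rho)\langle\bar x_t-x_t,m_t-\bar\gamma_t\rangle+\tfrac{\bar\rho\alpha_t^2}{2}\|\widehat V_t^{-1/4}(m_t-\bar\gamma_t)\|^2 .
\]

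Next I would take $\EE_t[\cdot]$, use $\EE_t[m_t]=\beta_{1,t}m_{t-1}+(1-\beta_{1,t})\gamma_t$ with $\gamma_t:=\EE_t[g_t]\in\partial f(x_t)$, and split the cross term into three pieces: (a) the momentum piece $\langle\bar x_t-x_t,m_{t-1}\rangle$, handled by the Young‑type device of \eqref{eq:thm12} (multiply and divide by $\alpha_t^{1/2}\widehat V_t^{-1/4}$, then use $(\widehat V_t)_{ii}\ge(\widehat V_{t-1})_{ii}$), which produces a $\beta_{1,t}^2$‑curvature term and an $\alpha_t^2\|\widehat V_{t-1}^{-1/4}m_{t-1}\|^2$ term; (b) the piece $\langle\bar x_t-x_t,\gamma_t-\bar\gamma_t\rangle$, bounded via the hypomonotonicity estimate of Lemma~\ref{lem:weak:hyp}(iii), which together with the weak‑convexity inequality (Lemma~\ref{lem:weak:hyp}(ii)) applied at $x_t$ and at $\bar x_t$ is what brings the quantity $f(x_t)-f(\bar x_t)-\tfrac{\rho}{2}\|\widehat V_t^{1/4}(x_t-\bar x_t)\|^2$ into the bound; and (c) the residual $\bar\gamma_t$‑terms, controlled by Young's inequality together with $\|\bar\gamma_t\|_\infty\le G_\infty$. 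For the last term in the one‑step bound I would use $\|\widehat V_t^{-1/4}(m_t-\bar\gamma_t)\|^2\le 2\|\widehat V_t^{-1/4}m_t\|^2+2\|\widehat V_t^{-1/4}\bar\gamma_t\|^2$, passing the first summand through Lemma~\ref{lem:spar} (which is why the first summand of $C_{1,t}$ in \eqref{prox:pj} carries the factor $2$) and estimating the second with the coordinatewise lower bounds $(\widehat\upsilon_t)_i\ge(1-\beta_2)(1-\beta_3)(g_t)_i^2$ and $\widehat\upsilon_t\ge q$ (so $\lambda_{\min}(\widehat V_t^{1/2})^{-1}\le\min_i q_i^{-1/2}$); these, with the $\bar\gamma_t$‑cross‑terms from (c), produce precisely the two extra summands $dG_\infty/\sqrt{(1-\beta_2)(1-\beta_3)}$ and $dG_\infty^2/\lambda_{\min}(\widehat V_t^{1/2})$ of $C_{1,t}$, while all the positive curvature generated along the way is soaked up by $-\tfrac{\bar\rho^2\alpha_t}{2}\|\widehat V_t^{1/4}(x_t-\bar x_t)\|^2$ — this is where the hypotheses $\bar\rho\in(\rho,2\rho]$ and $\alpha_t\le 1/\bar\rho$ enter.

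To finish, I would lower $\psi_{1/\bar\rho,\widehat V_t^{1/2}}(x_t)$ on the right to the smaller $\psi_{1/\bar\rho,\widehat V_{t-1}^{1/2}}(x_t)$ (admissible since $\bar x_t$ is feasible for the $\widehat V_{t-1}^{1/2}$‑envelope) at the price of a $\tfrac{\bar\rho}{2}\|(\widehat V_t^{1/4}-\widehat V_{t-1}^{1/4})(x_t-\bar x_t)\|^2$ term which, together with the $\beta_{1,t}^2$‑curvature from (a) and (c), telescopes exactly as in \eqref{pp} into $\bar\rho C_{2,T}$; then sum over $t\in(T]$, take total expectation, use $\psi^*\le\psi(x_{T+1})$ and the definition of $\Delta_\psi$, which leaves $\tfrac{1}{\sum_t\alpha_t}\sum_t\alpha_t\EE[f(x_t)-f(\bar x_t)-\tfrac{\rho}{2}\|\widehat V_t^{1/4}(x_t-\bar x_t)\|^2]$ on the left. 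Bounding this below using the $(\bar\rho-\rho)$‑strong convexity of $x\mapsto f(x)+\tfrac{\bar\rho}{2}\|\widehat V_{t^*}^{1/4}(x-x_{t^*})\|^2$ (minimizer $\bar x_{t^*}$), the gradient formula \eqref{eqn:grad_form}, and the sampling rule $\mathbb P(t^*=t)=\alpha_t/\sum_s\alpha_s$, exactly as in \eqref{klks}, yields \eqref{fd}. I expect the bookkeeping inside steps (b)–(c) to be the main obstacle: one must arrange the split of $\langle\bar x_t-x_t,m_t-\bar\gamma_t\rangle$ and the subsequent Young and weak‑convexity estimates so that the $\bar\gamma_t$‑contributions land \emph{exactly} in the two new summands of $C_{1,t}$ with no uncontrolled remainder, while the bonus term $-\tfrac{\bar\rho^2\alpha_t}{2}\|\widehat V_t^{1/4}(x_t-\bar x_t)\|^2$ stays large enough (given $\bar\rho\le 2\rho$) to dominate every positive curvature term that appears. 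The measurability point that $\bar x_t$ depends on $\xi_t$ through $\widehat V_t$ is handled just as in the proof of Theorem~\ref{Sthm:stochastic_sub}.
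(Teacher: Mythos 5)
Your overall architecture (Lemma~\ref{slem:prox_ident} plus nonexpansiveness of the scaled prox to get a contraction with factor $\delta_t=1-\alpha_t\bar\rho$, then the momentum/Young split, hypomonotonicity, Lemma~\ref{lem:spar}, and the telescoping into $C_{1,t},C_{2,T}$) is the paper's, but two of your steps, as stated, break down. First, the drift computation: you discard curvature too early via $(1-\alpha_t\bar\rho)^2-1\le-\alpha_t\bar\rho$, so the only negative term you retain is $-\tfrac{\bar\rho^2\alpha_t}{2}\|\widehat V_t^{1/4}(x_t-\bar x_t)\|^2$. But the hypomonotonicity bound for the piece $\langle\bar x_t-x_t,\gamma_t-\bar\gamma_t\rangle$ puts back $+\bar\rho\alpha_t(1-\alpha_t\bar\rho)\rho\,\|\widehat V_t^{1/4}(x_t-\bar x_t)\|^2$ (up to a $(1-\beta_{1,t})$ factor), and since $\bar\rho\le2\rho$ forces $\bar\rho\rho\ge\bar\rho^2/2$, the net coefficient is at best $-\tfrac{\bar\rho\alpha_t}{2}\bigl(\bar\rho-2\rho(1-\alpha_t\bar\rho)(1-\beta_{1,t})\bigr)$, which is only of order $\alpha_t^2+\alpha_t\beta_{1,t}$ at $\bar\rho=2\rho$ and can even be nonnegative when $\bar\rho<2\rho$ and $\alpha_t,\beta_{1,t}$ are small. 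So you never produce the drift $-\bar\rho(\bar\rho-\rho)\alpha_t\|\widehat V_t^{1/4}(x_t-\bar x_t)\|^2$ that yields the denominator $(\bar\rho-\rho)\sum_t\alpha_t$ in \eqref{fd}. The paper keeps $\delta_t^2$ exact and combines it with the hypomonotone term before discarding anything, $\delta_t^2+2\delta_t\alpha_t\rho=1-2\alpha_t(\bar\rho-\rho)-\alpha_t^2\bar\rho(2\rho-\bar\rho)$, and only then uses $\bar\rho\le2\rho$ to drop the nonpositive $O(\alpha_t^2)$ term (this is exactly \eqref{befzz}); the hypothesis $\bar\rho\le2\rho$ is there for that cancellation, not to let $-\tfrac{\bar\rho^2\alpha_t}{2}$ "soak up" the $\rho$-curvature.

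Second, your finish is not available in the composite case. The quantity $f(x_t)-f(\bar x_t)-\tfrac{\rho}{2}\|\widehat V_t^{1/4}(x_t-\bar x_t)\|^2$ cannot be extracted from $\langle\bar x_t-x_t,\gamma_t-\bar\gamma_t\rangle$: applying Lemma~\ref{lem:weak:hyp}(ii) at $x_t$ and at $\bar x_t$ and adding makes the function values cancel, and all that survives is the hypomonotonicity inequality, so the function-value gap you want on the left-hand side never appears. Moreover, the \eqref{klks}-type growth argument uses that $\bar x_{t^*}$ minimizes $y\mapsto f(y)+\tfrac{\bar\rho}{2}\|\widehat V_{t^*}^{1/4}(y-x_{t^*})\|^2$; here $\bar x_{t^*}=\textnormal{prox}_{\psi/\bar\rho,\widehat V_{t^*}^{1/2}}(x_{t^*})$ minimizes that quadratic perturbation of $\psi=f+h$, not of $f$, so the inequality is invalid unless you also carry $h(x_t)-h(\bar x_t)$ through the telescoping. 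The paper sidesteps both issues: once the per-step drift $-2\alpha_t(\bar\rho-\rho)\EE_t\|\widehat V_t^{1/4}(x_t-\bar x_t)\|^2$ is in hand, it telescopes the scaled distances themselves and concludes with $\|\nabla\psi_{1/\bar\rho,\widehat V_{t^*}^{1/2}}(x_{t^*})\|^2=\bar\rho^2\|\widehat V_{t^*}^{1/4}(x_{t^*}-\bar x_{t^*})\|^2$ from \eqref{eqn:grad_form}/\eqref{grad} together with the sampling rule; no strong-convexity step is needed, which is also why \eqref{fd}, unlike \eqref{Projected}, carries no $(1-\beta_1)$ in its denominator. To repair your proposal, keep the exact expansion of $\delta_t^2$ until after the hypomonotonicity term is merged in, and replace your last paragraph by this direct distance-based conclusion.
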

\begin{proof}
 Let $\delta_t=1-\alpha_t\bar  \rho$ and  $\bar{\gamma}_t =\EE_t[G(\bar{x}_t,\xi_t)]$. It follows from Lemma~\ref{slem:prox_ident} that
\begin{eqnarray}\label{jk}
\nonumber
 && \EE_t[  \|\widehat{V}_{t}^{1/4}(x_{t+1} - \bar x_t)\|^2]\\
\nonumber
&=& \EE_t[ \|\widehat{V}_t^{1/4}\big(x_{t+1}-\textnormal{prox}_{\alpha_t h,\widehat{V}_t^{1/2}}(\alpha_t \bar \rho x_t - \alpha_t\widehat{V}_t^{-1/2}\bar \gamma_t + (1-\alpha_t \bar \rho)\bar x_t)\big)\|^2]\\ \nonumber
	&\leq& \EE_t[\|\widehat{V}_{t}^{1/4}\big(x_t - \alpha_t \widehat{V}_t^{-1/2}m_t - (\alpha_t \bar \rho x_t - \alpha_t\widehat{V}_t^{-1/2}\bar \gamma_t + (1-\alpha_t \bar \rho)\bar x_t)\big)\|^2] \\\nonumber
	&=& \EE_t[\|\widehat{V}_{t}^{1/4}\delta_t(x_t - \bar x_t) - \alpha_t( \widehat{V}_{t}^{-1/4}m_t -\widehat{V}_{t}^{-1/4} \bar \gamma_t)\|^2]\\\nonumber
	&\leq& 2 \alpha_t^2\big( \EE_t[\| \widehat{V}_{t}^{-1/4} \bar \gamma_t\|^2] +   \EE_t[\| \widehat{V}_{t}^{-1/4}m_t\|^2]\big) \\
&+&  2 \delta_t\alpha_t \EE_t[\dotp{(\bar x_t-x_t ), m_t - \bar \gamma_t}] + \delta_t^2\EE_t[\|\widehat{V}_{t}^{1/4}(x_t - \bar x_t )\|^2] ,
	\end{eqnarray}
where the first inequality follows from Lemma~\ref{mc}(ii).

Now, by the definition of $m_t$ in Algorithm~\ref{alg:subgradient}, we have
\begin{align} \label{eq:thm0122}
\EE_t[ \dotp{\bar x_t-x_t , m_t- \bar \gamma_t}]
= \EE_t[\dotp{\bar x_t-x_t ,\beta_{1,t}(m_{t-1}-g_t)} ] + \EE_t[ \dotp{\bar x_t-x_t , g_t- \bar \gamma_t}].
\end{align}
Next, we multiple and divide the first term by $ \alpha_t^{1/2} \widehat{V}_{t}^{-1/4}$ and use the Cauchy-Schwarz inequality to obtain
\begin{subequations}
\begin{eqnarray}\label{eq:thm4.2.2}
\nonumber && \EE_t[\dotp{\bar x_t-x_t ,\beta_{1,t} (m_{t-1}-g_t)}]\\
\nonumber
&\leq& \frac{\alpha_t}{2}\EE_t[\|\widehat{V}_{t}^{-1/4}  (m_{t-1}-g_t)\|^2]+ \frac{\beta_{1,t}^2}{2\alpha_t} \EE_t[\|\widehat{V}_{t}^{1/4}(\bar{x}_t-x_t)\|^2]\\
&\leq & \alpha_t\EE_t[\|\widehat{V}_{t-1}^{-1/4}  m_{t-1}\|^2] +\alpha_t\EE_t[\|\widehat{V}_{t}^{-1/4} g_t\|^2]+ \frac{\beta_{1,t}^2}{2\alpha_t} \EE_t[\|\widehat{V}_{t}^{1/4}(\bar{x}_t-x_t)\|^2],
\end{eqnarray}
where the second inequality follows since $(\widehat{V}_{t})_{ii}\geq (\widehat{V}_{t-1})_{ii}$ for all $i\in [d]$.

Let $\gamma_t =\EE_t[G(x_t,\xi_t)]$. Assumption~\ref{it1} in conjunction with Lemma~\ref{lem:weak:hyp}(iii) yields
\begin{align}\label{eq:thm4.2.3}
-\EE_t[ \dotp{x_t-\bar x_t , g_t- \bar \gamma_t}]=-\dotp{x_t - \bar x_t,\gamma_t- \bar{\gamma}_t} \leq \rho\|\widehat{V}_{t}^{1/4}(x_t - \bar x_t)\|^2.
\end{align}
\end{subequations}
Substituting \eqref{eq:thm4.2.2} and \eqref{eq:thm4.2.3} into \eqref{jk} and using our assumption $ \delta_t \leq 1$, we obtain
\begin{eqnarray}\label{befzz}
\nonumber
 && \EE_t[  \|\widehat{V}_{t}^{1/4}(x_{t+1} - \bar x_t)\|^2]\\
\nonumber &\leq&  \|\widehat{V}_{t-1}^{1/4}(x_t-\bar{x}_t)\|^2 -\big(2\alpha_t(\bar \rho -\rho)+\alpha_t^2 \bar \rho (2\rho-\bar\rho)\big) \EE_t[\|\widehat{V}_{t}^{1/4}(x_t - \bar x_t )\|^2]
\\& +& \EE_t[\underbrace{\|(\widehat{V}_{t}^{1/4}-\widehat{V}_{t-1}^{1/4})( x_{t}-\bar x_t)\|^2+\beta_{1,t}^2 \|\widehat{V}_{t}^{1/4}(\bar{x}_t-x_t)\|^2}_{C_{1,2,t}}]  \nonumber
 \\\nonumber&+&\alpha_t^2 \big( \EE_t[ \underbrace{2\| \widehat{V}_{t}^{-1/4}m_t\|^2+2 \|\widehat{V}_{t-1}^{-1/4}  m_{t-1}\|^2+2 \| \widehat{V}_{t}^{-1/4} \bar \gamma_t\|^2 +2 \|\widehat{V}_{t}^{-1/4}  g_t\|^2}_{C_{1,1,t}}]
\big)
\\ \nonumber &\leq & \|\widehat{V}_{t-1}^{1/4}(\bar{x}_t-x_t)\|^2 - 2\alpha_t\big(\bar \rho-\rho\big)\EE_t[ \|\widehat{V}_{t}^{1/4}(x_t - \bar x_t) \|^2 ]
\\&+&\alpha_t^2 \EE_t[C_{1,1,t}]+ \EE_t [C_{1,2,t}],
\end{eqnarray}
where the first inequality uses \eqref{eq:thm0012} and the second inequality follows since  $\rho<\bar \rho \leq 2\rho$.
We combine \eqref{befzz} with \eqref{prox} to get
\begin{eqnarray*}\label{mn}
\nonumber
&&\EE_t[\psi_{1/\bar \rho,\widehat{V}_t^{1/2}}(x_{t+1})]
\\
&\leq & \EE_t[ \psi(\bar x_t) + \frac{\bar \rho}{2} \|\widehat{V}_{t}^{1/4}(x_{t+1} - \bar x_t)\|^2]  \\ \nonumber
&\leq&
\psi_{1/\bar \rho,\widehat{V}_{t-1}^{1/2}}(x_{t})
 + \frac{\bar \rho}{2}\Big(- 2\alpha_t\big(\bar \rho-\rho\big)\EE_t[ \|\widehat{V}_{t}^{1/4}(x_t - \bar x_t) \|^2 ] +\alpha_t^2 \EE_t[C_{1,1,t}]+ \EE_t [C_{1,2,t}]\Big).
\end{eqnarray*}
Taking expectations on both sides of the above inequality w.r.t. $\xi_0,\xi_1,\ldots,\xi_{t-1}$ and using the tower rule, we get
\begin{eqnarray}\label{pkpp1}
\nonumber
&& \bar \rho\alpha_t\big(\bar \rho-\rho\big) \sum_{t=0}^{T}\EE [\|\widehat{V}_{t}^{1/4}(x_t - \bar x_t) \|^2
] \\
&\leq &  \psi_{1/\bar \rho,\widehat{V}_{-1}^{1/2}}(x_{0}) - \EE [\psi_{1/\bar \rho,\widehat{V}_{T}^{1/2}}(x_{T+1})] +  \frac{\bar \rho }{2}\sum_{t=0}^{T} \Big(\alpha_t^2 \EE[C_{1,1,t}]+\EE[C_{1,2,t}]\Big).
\end{eqnarray}
Next, we upper bound each term on the R.H.S. of the above inequality.  To bound the third term, it follows from Lemma~\ref{lem:spar} that
\begin{eqnarray*}
\nonumber
&&\frac{\bar \rho }{2}\sum_{t=0}^{T}\alpha_t^2 \EE[C_{1,1,t}] \leq    \frac{\bar{\rho}}{2}  \sum_{t=0}^{T}  \frac{4   \alpha_t^2  \sum_{i=1}^d \sum_{k=0}^t{\tau}^{t-k} \EE[ |{g_{k,i}}|] }{{(1-\beta_1)\sqrt{(1-\beta_2)(1-\beta_3)}}} \\\nonumber&+&
\frac{\bar{\rho}}{2}  \sum_{t=0}^{T}  \frac{2   \alpha_t^2  \sum_{i=1}^d  \EE[ |{g_{t,i}}|] }{{\sqrt{(1-\beta_2)(1-\beta_3)}}} +
  \frac{\bar{\rho}}{2}  \sum_{t=0}^{T} 2\alpha_t^2 \EE[\| \widehat{V}_{t}^{-1/4} \bar \gamma_t\|^2] \\
\nonumber
 & \leq &  \bar{\rho}\sum_{t=0}^{T} \frac{ 2 \alpha_t^2  \sum_{k=0}^t\tau^{t-k} \EE [\|g_{k}\|_1]}{(1-\beta_1)\sqrt{(1-\beta_2)(1-\beta_3)}}
 +   \frac{ \bar{\rho}  \sum_{t=0}^{T} \alpha_t^2  d G_{\infty} }{{\sqrt{(1-\beta_2)(1-\beta_3)}}}+   \frac{\bar{\rho}  \sum_{t=0}^{T} \alpha_t^2  dG_\infty^2}{\lambda_{\min}(\widehat{V}_{t}^{1/2})} \\
 &= &  \bar{\rho}\sum_{t=0}^{T} \alpha_t^2  C_{1,t},
\end{eqnarray*}
where $C_{1,t}$ is defined \eqref{prox:pj}.

Further, by the same argument as in \eqref{pp}, we have
\begin{eqnarray}\label{fppz}
\nonumber
\frac{\bar \rho }{2}\sum_{t=0}^{T} \EE[C_{1,2,t}]
&\leq& \frac{\bar \rho D_{\infty}^2}{2} (\EE[\| \widehat{\upsilon}_{T}^{1/2}\|_1]+\sum_{t=0}^T\beta_{1,t}^2\EE[\|\widehat{\upsilon}_{t}^{1/2}\|_1 ]) = \bar{\rho} C_{2,T},
\end{eqnarray}
where $C_{2,T}$ is defined \eqref{prox:pj}.

We combine \eqref{grad} with \eqref{pkpp1} to obtain
\begin{align*}
&\frac{1}{\bar \rho^2}\nonumber \|\nabla \psi_{1/\bar \rho,\widehat{V}_{t^*}^{1/2}}(x_{t^*})\|^2
 =\EE [\|\widehat V_{t^*}^{1/4}(x_{t^*} - \bar x_{t^*})\|^2]\\
&=\frac{1}{\sum_{t=0}^T \alpha_t}\sum_{t=0}^T \alpha_t \EE [\|\widehat V_{t}^{1/4}(x_{t} - \bar x_{t})\|^2]\\
\nonumber
&\leq \frac{1}{\bar \rho\big(\bar \rho-\rho\big)\sum_{t=0}^{T}\alpha_t}\Big(\psi_{1/\bar \rho,\widehat{V}_{-1}^{1/2}}(x_{0}) - \psi^*  + \bar \rho  \sum_{t=0}^{T} \alpha_t^2 C_{1,t} + \bar{\rho} C_{2,T}\Big),
\end{align*}
where the last inequality follows since $ \psi_{1/\bar \rho,\widehat{V}_{T}^{1/2}}(x_{T+1})\geq \psi^*$ and  $\psi^*$ is finite.

\end{proof}

The following corollary, analogous to the Corollary \ref{Scor:complexity}, shows how to choose $\alpha_t$  and  $\bar \rho$  appropriately in the proximal setting.

\begin{cor}
Under the same conditions as in Theorem~\ref{Sthm:stochastic_sub2}, for all $t \in (T]$, let the parameters be set to
\begin{align*}
\beta_{1,t}&=\beta_1\pi^{t-1},~~~ \pi\in (0,1), ~~~\bar \rho=2\rho, ~~~ \alpha_t =\frac{\alpha}{\sqrt{T+1}},  \quad \textnormal{and} \quad \alpha \in (0,\frac{1}{2\rho}].
\end{align*}
Then, for $x_{t^*}$ returned by Algorithm~\ref{alg:subgradient}, we have
\begin{align*}
\EE \left[\|\nabla \psi_{1/(2\rho),\widehat{V}_{t^*}^{1/2}}(x_{t^*})\|^2\right]
\leq O(\frac{d}{\sqrt{T}}).
\end{align*}	
\end{cor}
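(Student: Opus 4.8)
The plan is to follow the same three-step template used in the proof of Corollary~\ref{Scor:complexity}: first bound the quantities $C_{1,t}$ and $C_{2,T}$ appearing in \eqref{fd} uniformly in $t$ by expressions that are $O(d)$ and independent of $T$, then substitute the prescribed choices $\alpha_t=\alpha/\sqrt{T+1}$ and $\bar\rho=2\rho$ into the bound of Theorem~\ref{Sthm:stochastic_sub2}, and finally pick the free scalar $\alpha\in(0,1/(2\rho)]$. The only genuinely new ingredient relative to Corollary~\ref{Scor:complexity} is the treatment of the two additional summands in the proximal form of $C_{1,t}$ in \eqref{prox:pj}, and in particular of the term $dG_\infty^2/\lambda_{\min}(\widehat V_t^{1/2})$.

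For $C_{2,T}$ and for the first summand of $C_{1,t}$ I would repeat the computation of Corollary~\ref{Scor:complexity} essentially verbatim: plugging $\beta_{1,t}=\beta_1\pi^{t-1}$ into $C_{2,T}$ and invoking Lemma~\ref{lm:infty} (which gives $\EE[\|\widehat\upsilon_t^{1/2}\|_1]\le dG_\infty$) yields $C_{2,T}\le C_2$ with $C_2=O(d)$ exactly as in \eqref{eqn:bc23}, and using $\EE[\|g_k\|_1]\le dG_\infty$ together with $\sum_{j\ge 0}\tau^{j}\le 1/(1-\tau)$ for $\tau=\beta_1/\sqrt{\beta_2}<1$ bounds the first summand of $C_{1,t}$ by twice the constant appearing in \eqref{eqn:bc12}, again $O(d)$. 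The second summand $dG_\infty/\sqrt{(1-\beta_2)(1-\beta_3)}$ is already a fixed $O(d)$ constant.

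The new point, which I expect to be the crux, is a uniform-in-$t$ bound on $dG_\infty^2/\lambda_{\min}(\widehat V_t^{1/2})$, which requires a strictly positive lower bound on $\lambda_{\min}(\widehat V_t^{1/2})=\min_{i\in[d]}(\widehat\upsilon_t)_i^{1/2}$. I would derive it from the monotonicity of the sequence $\{\widehat\upsilon_t\}$: since $\max(\widehat\upsilon_{t-1},\upsilon_t)$ dominates $\widehat\upsilon_{t-1}$ coordinatewise, the update $\widehat\upsilon_t=\beta_3\widehat\upsilon_{t-1}+(1-\beta_3)\max(\widehat\upsilon_{t-1},\upsilon_t)$ gives $(\widehat\upsilon_t)_i\ge(\widehat\upsilon_{t-1})_i\ge\cdots\ge(\widehat\upsilon_{-1})_i=q_i$ for every $i\in[d]$, so that $\lambda_{\min}(\widehat V_t^{1/2})\ge\min_{i}q_i^{1/2}=\sqrt{\lambda_{\min}(Q)}>0$ by Assumption~\ref{it1}. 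Hence the third summand is at most $dG_\infty^2/\sqrt{\lambda_{\min}(Q)}$, and combining the three estimates gives $C_{1,t}\le C_1'$ for a constant $C_1'=O(d)$ independent of both $t$ and $T$.

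With these uniform bounds in place, I substitute $\alpha_t=\alpha/\sqrt{T+1}$ (so $\sum_{t=0}^T\alpha_t=\alpha\sqrt{T+1}$ and $\sum_{t=0}^T\alpha_t^2=\alpha^2$), $\bar\rho=2\rho$ and $\bar\rho-\rho=\rho$ into \eqref{fd} to obtain
\[
\EE\!\left[\|\nabla\psi_{1/(2\rho),\widehat V_{t^*}^{1/2}}(x_{t^*})\|^2\right]\;\le\;\frac{2\rho\,\Delta_\psi+4\rho^2\bigl(\alpha^2 C_1'+C_2\bigr)}{\rho\,\alpha\,\sqrt{T+1}}.
\]
Since $C_1',C_2=O(d)$ and $\Delta_\psi=\psi_{1/(2\rho),\widehat V_{-1}^{1/2}}(x_0)-\psi^*$ is a fixed finite quantity (Assumption~\ref{it1}), the right-hand side is $O(d/\sqrt{T})$ for every admissible $\alpha\in(0,1/(2\rho)]$; taking $\alpha$ to minimize the right-hand side over $(0,1/(2\rho)]$ — the constraint $\alpha\le 1/(2\rho)$ being needed only so that Theorem~\ref{Sthm:stochastic_sub2} applies and not affecting the $T$-dependence — yields the claimed $O(d/\sqrt{T})$ rate. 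Thus the only real obstacle is the lower bound on $\lambda_{\min}(\widehat V_t^{1/2})$; everything else is a transcription of the projected case.
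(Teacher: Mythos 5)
Your proposal is correct and follows essentially the same route as the paper's proof: bound $C_{2,T}$ and $C_{1,t}$ by $T$-independent $O(d)$ constants via Lemma~\ref{lm:infty}, the geometric-series bound for $\tau=\beta_1/\sqrt{\beta_2}$, and the coordinatewise monotonicity $\widehat\upsilon_t\geq\widehat\upsilon_{-1}=q$ to control $\lambda_{\min}(\widehat V_t^{1/2})$, then substitute $\alpha_t=\alpha/\sqrt{T+1}$ and $\bar\rho=2\rho$ into \eqref{fd}. You are in fact slightly more careful than the paper on two points it glosses over: the correct lower bound is $\sqrt{\lambda_{\min}(Q)}$ rather than $\lambda_{\min}(Q)$, and the minimization over $\alpha$ must respect the constraint $\alpha\leq 1/(2\rho)$, which as you note does not affect the $O(d/\sqrt{T})$ rate.
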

\begin{proof}
Substituting $\beta_{1,t}=\beta_1\pi^{t-1}$ in  $C_{2,T}$ defined in \eqref{prox:pj} and using Lemma~\ref{lm:infty}, we obtain
\begin{eqnarray*}
\nonumber
C_{2,T} &=&\frac{D_{\infty}^2 }{2} (\EE[\|\widehat{\upsilon}_{T}^{1/2}\|_1]+\sum_{t=0}^T\beta_{1,t}^2\EE[\|\widehat{\upsilon}_{t}^{1/2}\|_1])
\\\nonumber&\leq&  \frac{d D_{\infty}^2 G_{\infty}}{2}(\sum_{t=0}^T\beta_1^2\pi^{2(t-1)}+1)\\
&\leq&\frac{d D_{\infty}^2 G_{\infty}}{2}(\frac{\beta_1^2}{1-2\pi}+1)=: C_2.
\end{eqnarray*}	
Also, for the term $C_{1,t}$ defined in \eqref{prox:pj}, we have
\begin{eqnarray*}
\nonumber
 C_{1,t} &=&  (\frac{ 2\sum_{k= 0}^t \tau^{t-k} \EE [\|{g_{k}}\|_1]}{(1-\beta_1)  }+dG_{\infty}) \frac{1}{\sqrt{(1-\beta_2)(1-\beta_3)}} + \frac{ dG_\infty^2}{\lambda_{\min}(\widehat{V}_t^{1/2})} \\
\nonumber
&\leq&  (\frac{ 2 \sum_{k= 0}^t \tau^{t-k} }{(1-\beta_1) }+1) \frac{dG_{\infty}}{\sqrt{(1-\beta_2)(1-\beta_3)}} +\frac{ dG_\infty^2}{\lambda_{\min}(\widehat{V}_t^{1/2})}\\
 &\leq &  (\frac{ 2 }{(1-\tau) (1-\beta_1) }+1)\frac{dG_{\infty}}{\sqrt{(1-\beta_2)(1-\beta_3)}} +\frac{   dG_\infty^2}{\lambda_{\min}(Q)} =:C_1,
\end{eqnarray*}
where the first inequality is obtained from Lemma~\ref{lm:infty} and the last inequality uses  $$\sum_{t=0}^{T} \tau^t \leq \frac{1}{1-\tau}, \qquad \text{where} \quad  \tau = \frac{\beta_1}{\sqrt{\beta_2}}<1. $$
The reminder of proof is similar to that of Corollary~\ref{Scor:complexity}.
\end{proof}
\subsection{ Zeroth-order EMA-type method (\textsc{Zema})}

Our goal in this section is to specialize the EMA-type method to deal with the situation when only stochastic zeroth-order information of the  function $f(x)$ is available. Henceforth, we consider the following stochastic optimization problem
\begin{equation} \label{eqn:zproblem_class}
\min_{x \in \R^d} \psi(x)= f(x) + h(x), \quad \textnormal{where} \quad  \EE_{\xi}[F(x,\xi)] = f(x).
\end{equation}
Here,  $f$, $h$, and $\psi$ satisfy Assumption~\ref{it1}.
%
%

In addition, similar to the first order setting, we let $G(x,\xi) \in \partial_x F(x,\xi)$ satisfy Assumption \ref{it2}.
Under these assumptions, Algorithm~\ref{alg:subgradientzero} depicts a variant of zeroth-order algorithm where at each iteration $t$, the subgradient estimator $g_{\mu,t}$ is constructed as follows
$$
g_{\mu,t}:=G_\mu(x_t,\xi_{t},u_{t}),
$$
where
\begin{equation}\label{ppl}
  G_\mu(x_t,\xi_{t},u_{t})=\frac{d}{\mu}\big[F(x_t+\mu u_{t},\xi_{t})-F(x_t,\xi_{t})\big]u_{t}.
\end{equation}
Here, $\mu > 0$ is a smoothing parameter and $\{u_{t}\}_{t=0}^T$ are  $d$-dimensional random vectors uniformly distributed over the unit Euclidean ball \cite{shalev2012online}.

\begin{algorithm}[t]
\caption{Zeroth-order EMA-type method (\textsc{Zema})}\label{alg:subgradientzero}
\SetKwInput{Input}{input~}
\SetKwInput{Output}{output~}
\SetKwInput{optionone}{option~I~}
\SetKwInput{optiontwo}{option~II~}
\Input{Initial point $x_0 \in \dom h$, number of iterations $T$,  stepsizes $\{\alpha_t\}_{t=0}^T\subset\R_+$, decay parameters $\{\beta_{1,t}\}_{t=0}^T, \beta_2, \beta_3 \in [0,1)$, smoothing parameter $\mu >0$ and a vector $q \in \mathbb{R}_+^d$ satisfying Assumption~\ref{it5}\;}
\BlankLine
Initialize $m_{\mu,-1}= \upsilon_{\mu,-1}=0$ and $\widehat{\upsilon}_{\mu,-1}= q $\;
\For{ $0\leftarrow t : T$}{ \label{forins}
Compute $G_\mu(x_t,\xi_{t},u_{t})$ by \eqref{ppl}\;
$g_{\mu,t}=G_\mu(x_t,\xi_{t},u_{t})$\;
$m_{\mu,t}= \beta_{1,t} m_{\mu,t-1}+(1-\beta_{1,t})g_{\mu,t}$\; 
$\upsilon_{\mu,t}= \beta_2\upsilon_{\mu,t-1}+(1-\beta_2) g_{\mu,t}^2$\;
 $\widehat{\upsilon}_{\mu,t}= \beta_{3}\widehat{\upsilon}_{\mu,t-1}+(1-\beta_{3}) \max (\widehat{\upsilon}_{\mu,t-1},\upsilon_{\mu,t})$ and $\widehat{V}_{\mu,t}= \text{diag}(\widehat{\upsilon}_{\mu,t})$\;
 $ x_{t+1}=\textnormal{prox}_{\alpha_th,\widehat{V}_{\mu,t}^{1/2}}\big(x_{t} - \alpha_t\widehat{V}_{\mu,t}^{-1/2} m_{\mu,t}\big)$\label{m11} \;
}
\BlankLine
\Output{Choose $x_{t^*}$ from $\{x_t\}_{t=0}^{T}$ with probability $\mathbb{P}(t^*=t)=\alpha_t/\sum_{t=0}^T \alpha_t$.}
\end{algorithm}

Compared with \textsc{Fema} algorithm, we can see at the $t^{th}$ iteration, the $\textsc{Zema}$ algorithm simply replaces the stochastic subgradient $ G(x_t,\xi_{t})$ by the approximated stochastic gradient $G_\mu(x_t,\xi_{t},u_{t})$. We note that unlike the first-order stochastic gradient estimate, the zeroth order gradient estimate \eqref{ppl} is a biased approximation to the true subgradient of $f$. Instead,  it becomes unbiased to the gradient of the randomized smoothing function $f_\mu$ \cite{gao2018information}:
\begin{equation} \label{rand_smooth_func}
f_{\mu}(x) = \frac{1}{\mathcal{V}(d)} \int_{u \in B} f(x+\mu u)du =\bbe_{\{u\sim U_b\}}\left[f(x+\mu u)\right],
\end{equation}
where $U_b$ is the uniform distribution over the unit Euclidean ball; $B$ is the unit ball; and $\mathcal{V}(d)$ is the volume of the unit ball in $\Bbb{R}^d$.

This relation implies that we can estimate gradient of $f_\mu$ by only using evaluations of $f$ as formulated in \eqref{ppl} since
\begin{equation}\label{eqn:ex_G_mu}
 \gamma_{\mu}= \EE_{\xi, u}[G_{\mu}(x, \xi, u)] = \EE_u \left[ \EE_{\xi}[G_{\mu}(x, \xi, u)|u] \right] = \nabla f_{\mu}(x).
\end{equation}

Next, we discuss some important properties of this subgradient estimator for $(\rho,Q)$-weakly convex functions.

\begin{lem}\label{M2}
If $f$ is $(\rho,Q)$-weakly convex, then $f_{\mu}$ is  also $(\rho,Q)$-weakly convex.
\end{lem}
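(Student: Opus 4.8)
The plan is to reduce the claim to two elementary facts: convexity is preserved under composition with an affine map, and under taking expectations; the only genuine computation is to eliminate a cross term using the symmetry of the uniform distribution on the ball.

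First I would set $g(x) := f(x) + \tfrac{\rho}{2}\|Q^{1/2}x\|^2$, which is convex by the hypothesis that $f$ is $(\rho,Q)$-weakly convex (Definition~\ref{def:weak}). For each fixed $u$ in the unit ball, the map $x \mapsto g(x+\mu u)$ is then convex, being the composition of $g$ with the affine map $x \mapsto x + \mu u$; hence $x \mapsto \bbe_{u\sim U_b}[g(x+\mu u)]$ is convex as an expectation (integral) of convex functions.

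Next I would relate this expectation to $f_\mu(x) + \tfrac{\rho}{2}\|Q^{1/2}x\|^2$. Expanding the quadratic and using that $u$ is uniformly distributed on the unit Euclidean ball, so $\bbe_{u\sim U_b}[u] = 0$ by central symmetry, gives
\[
\bbe_{u\sim U_b}\!\left[\tfrac{\rho}{2}\|Q^{1/2}(x+\mu u)\|^2\right] = \tfrac{\rho}{2}\|Q^{1/2}x\|^2 + \tfrac{\rho\mu^2}{2}\,\bbe_{u\sim U_b}\!\left[\|Q^{1/2}u\|^2\right],
\]
where the last term is a finite constant independent of $x$. Writing $f(x+\mu u) = g(x+\mu u) - \tfrac{\rho}{2}\|Q^{1/2}(x+\mu u)\|^2$, taking expectations over $u$, and invoking the definition \eqref{rand_smooth_func} of $f_\mu$ then yields
\[
f_\mu(x) + \tfrac{\rho}{2}\|Q^{1/2}x\|^2 = \bbe_{u\sim U_b}[g(x+\mu u)] - \tfrac{\rho\mu^2}{2}\,\bbe_{u\sim U_b}\!\left[\|Q^{1/2}u\|^2\right],
\]
which is a convex function minus a constant, hence convex. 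By Definition~\ref{def:weak} this shows $f_\mu$ is $(\rho,Q)$-weakly convex.

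I do not expect a serious obstacle here. The only points requiring a little care are the vanishing of the cross term $\bbe_u[\langle Q^{1/2}x, \mu Q^{1/2}u\rangle]$, which follows from the symmetry of $U_b$, and the tacit integrability of $f$ near each point (guaranteed in the zeroth-order setting, where $f$ arises from the finite-valued evaluations $F(\cdot,\xi)$), so that $f_\mu$ and the expectations above are well defined and finite.
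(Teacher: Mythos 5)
Your proposal is correct and follows essentially the same route as the paper's proof: both identify $f_\mu(x)+\tfrac{\rho}{2}\|Q^{1/2}x\|^2$ (up to the additive constant $\tfrac{\rho\mu^2}{2}\EE_u[\|Q^{1/2}u\|^2]$) with the average over $u$ of the convex functions $x\mapsto f(x+\mu u)+\tfrac{\rho}{2}\|Q^{1/2}(x+\mu u)\|^2$, using the symmetry of the ball to dispose of the cross term. Your write-up is in fact slightly more explicit than the paper's about why the cross term vanishes and about integrability, but there is no substantive difference in the argument.
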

\begin{proof}
It follows from the definition of $f_{\mu}(x)$ in \eqref{rand_smooth_func} that
\begin{align}\label{fff0}
  \nonumber f_{\mu}(x)&+\frac{\rho \mu^2}{2}\EE_{u}\Big[\|Q^{1/2}u\|^2\Big]+\frac{\rho}{2}\|Q^{1/2}x\|^2 \\
  \nonumber &= \frac{1}{\mathcal{V}(d)}\int_{B} f(x+\mu u) \,du\\\nonumber
  &+\frac{\rho \mu^2}{2{\mathcal{V}(d)}} \int_{B}\|Q^{1/2}u\|^2du+ \frac{\rho}{2}\|Q^{1/2}x\|^2 \\ \nonumber&= \frac{1}{\mathcal{V}(d)}\int_{B} f(x+\mu u) \,du\\
 \nonumber
&+\frac{\rho}{2{\mathcal{V}(d)}}\Big( \int_{B} \big(\|Q^{1/2}x\|^2+2\mu Q \langle x,u \rangle+\mu^2\|Q^{1/2}u\|^2\big) du\Big)  \\
   &= \frac{1}{\mathcal{V}(d)}\int_{B} \Big(f(x+\mu u)+\frac{\rho}{2}\|Q^{1/2}(x+\mu u)\|^2\Big) du,
\end{align}
where the first equality follows from \eqref{rand_smooth_func}.

We note that the function on the R.H.S. of \eqref{fff0} is convex because $f$ is $(\rho,Q)$-weakly convex. Thus,
$$x\mapsto f_{\mu}(x)+\frac{\rho \mu^2}{2}\EE_{u}\Big[\|Q^{1/2}u\|^2\Big]+\frac{\rho}{2}\|Q^{1/2}x\|^2$$ is
convex. Now, by Definition~\ref{def:weak}, the functions $f_{\mu}+\frac{\rho \mu^2}{2}\EE_{u}\Big[\|Q^{1/2}u\|^2\Big]$ and $f_{\mu}$  are $(\rho,Q)$-weakly convex.
\end{proof}

%

We next look at the consequences of our results in the zeroth-order setting where we only have access to noisy function values. Similar to Theorem~\ref{Sthm:stochastic_sub}, we first present a simplified argument for $h$ being the indicator function of a closed convex set $\cX$.  The following theorem establishes the convergence of stochastic zeroth order methods including \textsc{Zema} for solving constrained problem \eqref{eqn:zproblem_class}.

\begin{thm}[\textbf{projected \textsc{Zema}}]\label{thm:stochastic_subzero}
Suppose Assumptions~\ref{it1}--\ref{it2} hold and $\|x-y\|_{\infty} \leq D_{\infty}$ for all $x, y\in \cX$. Further, for all $t \in (T]$, let
\begin{align*}
 0 \leq \beta_{1,t}\leq\beta_1, ~~~\{\beta_i\}_{i=1}^3  \in [0,1), ~~~\tau = \frac{\beta_1}{\sqrt{\beta_2}}<1,~~~\bar{\rho} >\rho, \quad \textnormal{and} \quad  \mu>0.
\end{align*}
Then, for $x_{t^*}$ generated by Algorithm~\ref{alg:subgradientzero}, we have
\begin{eqnarray}\label{thm}
\nonumber
\EE \left[\|\nabla \psi_{1/\bar \rho,\widehat{V}_{\mu,t^*}^{1/2}}(x_{t^*})\|^2\right] &\leq&
 \frac{ \bar \rho \Delta_{\psi}+ \bar \rho^2 ( \sum_{t=0}^{T} \alpha_t^2 C_{1,\mu,t}+  C_{2,\mu,T}) }{(\bar \rho-\rho)(1-\beta_1)\sum_{t=0}^T \alpha_t } \\
 &+& \frac{\bar{\rho}^2 C_{3,\mu,T}}{\bar{\rho}-\rho}.
\end{eqnarray}
Here, $\Delta_{\psi}=\psi_{1/\bar \rho,\widehat{V}_{\mu,-1}^{1/2}}(x_{0}) - \psi^*$,
\begin{eqnarray}\label{eqn:c2}
\nonumber
C_{1,\mu,t} &=&\frac{  \sum_{k=0}^t\tau^{t-k} \EE [\|g_{\mu,k}\|_1]}{(1-\beta_1)\sqrt{(1-\beta_2)(1-\beta_3)}},\\
\nonumber
C_{2,\mu,T} &=& \frac{ D_{\infty}^2}{2} \big(\sum_{t=0}^T \beta_{1,t}^2 \EE[ \|\widehat{\upsilon}_{\mu,t}^{1/2} \|_1] +\EE[ \|\widehat{\upsilon}_{\mu,T}^{1/2} \|_1]\big),\\
C_{3,\mu,T} &=&  \EE \big[  \max_{t \in (T]}|f(\bar{x}_t)-f_{\mu}(\bar{x}_t)| \big]+\EE \big[  \max_{t \in (T]}|f(x_t)-f_{\mu}(x_t)| \big].
\end{eqnarray}
\end{thm}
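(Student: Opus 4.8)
The plan is to reuse the proof of Theorem~\ref{Sthm:stochastic_sub} almost verbatim, since \textsc{Zema} is precisely \textsc{Fema} with the oracle output $g_t=G(x_t,\xi_t)$ replaced by the smoothed estimator $g_{\mu,t}=G_\mu(x_t,\xi_t,u_t)$. The one structural change is that, by \eqref{eqn:ex_G_mu}, $\gamma_{\mu,t}:=\EE_t[g_{\mu,t}]=\nabla f_\mu(x_t)$ belongs to $\partial f_\mu(x_t)$ rather than to $\partial f(x_t)$; so the subgradient inequality must be applied to $f_\mu$ and then transferred back to $f$. First I would set $\bar x_t=\textnormal{prox}_{\psi/\bar\rho,\widehat V_{\mu,t}^{1/2}}(x_t)$ (still the prox of the original $\psi=f+\iota_{\cX}$) and repeat the chain \eqref{myeqn:keydef}: bound $\EE_t[\psi_{1/\bar\rho,\widehat V_{\mu,t}^{1/2}}(x_{t+1})]\le\EE_t[f(\bar x_t)+\tfrac{\bar\rho}{2}\|\widehat V_{\mu,t}^{1/4}(x_{t+1}-\bar x_t)\|^2]$ via the definition \eqref{prox}, expand $x_{t+1}=\Pi_{\cX,\widehat V_{\mu,t}^{1/2}}(x_t-\alpha_t\widehat V_{\mu,t}^{-1/2}m_{\mu,t})$, use the nonexpansiveness of the scaled prox (Lemma~\ref{mc}(ii)), and apply the same Cauchy--Schwarz splitting of $\langle\bar x_t-x_t,m_{\mu,t}\rangle$ through $m_{\mu,t}=\beta_{1,t}m_{\mu,t-1}+(1-\beta_{1,t})g_{\mu,t}$. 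None of these steps sees which function the oracle estimates, so they go through unchanged with $g\mapsto g_\mu$, $m\mapsto m_\mu$, $\widehat\upsilon\mapsto\widehat\upsilon_\mu$; Lemma~\ref{lem:spar} and the telescoping argument of \eqref{pp} then reproduce the sums $\sum_t\alpha_t^2 C_{1,\mu,t}$ and $C_{2,\mu,T}$ exactly as in \eqref{eq:fcccthmi1}--\eqref{pp}, and taking total expectations gives the analogue of \eqref{eqn:total:low}.

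The only new estimate is the bound on $\EE_t[\langle\bar x_t-x_t,\gamma_{\mu,t}\rangle]$. By Lemma~\ref{M2}, $f_\mu$ is $(\rho,Q)$-weakly convex, and $\gamma_{\mu,t}=\nabla f_\mu(x_t)\in\partial f_\mu(x_t)$, so Lemma~\ref{lem:weak:hyp}(ii) applied to $f_\mu$ yields $\langle\gamma_{\mu,t},\bar x_t-x_t\rangle\le f_\mu(\bar x_t)-f_\mu(x_t)+\tfrac{\rho}{2}\|\widehat V_{\mu,t}^{1/4}(x_t-\bar x_t)\|^2$. I would then transfer back to $f$ via $f_\mu(\bar x_t)-f_\mu(x_t)\le f(\bar x_t)-f(x_t)+|f(\bar x_t)-f_\mu(\bar x_t)|+|f(x_t)-f_\mu(x_t)|$, so that after summing against the weights $(1-\beta_{1,t})\alpha_t\le\alpha_t$ and replacing the two error terms by their maxima over $t\in(T]$, the extra contribution to the right-hand side of \eqref{eqn:total:low} is at most $\bar\rho\bigl(\sum_{t=0}^T\alpha_t\bigr)C_{3,\mu,T}$. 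Substituting this, using $\psi^*\le\psi_{1/\bar\rho,\widehat V_{\mu,T}^{1/2}}(x_{T+1})$ with $\Delta_\psi=\psi_{1/\bar\rho,\widehat V_{\mu,-1}^{1/2}}(x_0)-\psi^*$, and dividing by $\bar\rho\sum_t\alpha_t$ produces the analogue of \eqref{myeqn:main_ineqs} carrying an additive $C_{3,\mu,T}$.

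Finally I would close exactly as in \eqref{klks}: since $f$ is $(\rho,Q)$-weakly convex, $y\mapsto f(y)+\tfrac{\bar\rho}{2}\|\widehat V_{\mu,t^*}^{1/4}(y-x_{t^*})\|^2$ is strongly convex with modulus $\bar\rho-\rho$ and minimized at $\bar x_{t^*}$, which gives $(1-\beta_{1,t^*})\bigl(f(x_{t^*})-f(\bar x_{t^*})-\tfrac{\rho}{2}\|\widehat V_{\mu,t^*}^{1/4}(x_{t^*}-\bar x_{t^*})\|^2\bigr)\ge\tfrac{(1-\beta_1)(\bar\rho-\rho)}{\bar\rho^2}\|\nabla\psi_{1/\bar\rho,\widehat V_{\mu,t^*}^{1/2}}(x_{t^*})\|^2$ via \eqref{eqn:grad_form}; taking expectation over the random index $t^*$ — which turns the left side into $\tfrac{1}{\sum_t\alpha_t}\sum_t(1-\beta_{1,t})\alpha_t(\cdots)$ — and rearranging yields \eqref{thm}. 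The genuinely new ingredient is the transfer step, and the one point that requires care is that it is legitimate to keep the \emph{original} $\psi$ (hence the original prox point $\bar x_t$ and the original stationarity measure $\nabla\psi_{1/\bar\rho,\widehat V_{\mu,t^*}^{1/2}}$) everywhere except in that single subgradient inequality; this is exactly why the smoothing bias surfaces only through the additive term $C_{3,\mu,T}$ and nowhere else.
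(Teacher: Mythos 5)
Your proposal is correct and follows essentially the same route as the paper's own proof: the verbatim adaptation of the \textsc{Fema} argument with $g\mapsto g_\mu$, the use of Lemma~\ref{M2} to apply the weak-convexity inequality to $f_\mu$ at $\gamma_{\mu,t}=\nabla f_\mu(x_t)$, and the transfer back to $f$ via the triangle inequality so that the smoothing bias appears only through the additive term $C_{3,\mu,T}$ are exactly the steps in \eqref{fmu}--\eqref{eqn:main_ineqsz}. The only cosmetic remark is that carrying the weights $(1-\beta_{1,t})\alpha_t$ through the final rearrangement yields the $C_{3,\mu,T}$ term with an extra factor $1/(1-\beta_1)$ relative to the displayed bound \eqref{thm}, a harmless discrepancy already present in the paper's own derivation.
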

\begin{proof}
Let
\begin{eqnarray*}
 \S_t  \equiv  (\xi_t,u_t), \quad \text{and} \quad \bar x_t  = \textnormal{prox}_{\psi/\bar \rho,\widehat{V}_{\mu,t}^{1/2}}(x_t),\quad \textnormal{for\,\,all} \quad  t \in (T].
\end{eqnarray*}
Also, let $\EE_{\S_t }[\cdot]$ to denote the expectation conditioned on all the realizations $\S_0, \S_1, \ldots , \S_{t-1}$. For the sequence $\{x_t\}_{t=0}^T$ generated by Algorithm~\ref{alg:subgradientzero}, we have
\begin{eqnarray*}
\nonumber
&& \EE_{\S_t}[ \psi_{1/\bar \rho,\widehat{V}_{\mu,t}^{1/2}}(x_{t+1})] \leq \EE_{\S_t}[ f(\bar x_t) + \frac{\bar \rho}{2} \|\widehat{V}_{\mu,t}^{1/4}(x_{t+1} - \bar x_t)\|^2]\notag\\
&=& f(\bar x_t) + \frac{\bar \rho}{2}\EE_{\S_t}[ \|\widehat{V}_{\mu,t}^{1/4}[\Pi_{\cX,\widehat{V}_{\mu,t}^{1/2}}(x_{t} - \alpha_t \widehat{V}_{\mu,t}^{-1/2}m_{\mu,t}) -  \Pi_{\cX,\widehat{V}_{\mu,t}^{1/2}}(\bar x_t)]\|^2] \nonumber\\
&\leq & f(\bar x_t) + \frac{\bar \rho}{2}\EE_{\S_t}[ \|\widehat{V}_{\mu,t}^{1/4}(x_{t}  - \bar x_t- \alpha_t \widehat{V}_{\mu,t}^{-1/2}m_{\mu,t})\|^2] \nonumber\\
\nonumber
&\leq & f(\bar x_t) + \frac{\bar \rho}{2}  \EE_{\S_t}[\|\widehat{V}_{\mu,t}^{1/4}(x_{t} - \bar x_t)\|^2]
\\&+& \bar \rho\alpha_t\EE_{\S_t} [\dotp{\bar x_t-x_t , m_{\mu,t}}]
+  \frac{\alpha_t^2\bar \rho}{2}\EE_{\S_t}[ \|\widehat{V}_{\mu,t}^{-1/4}m_{\mu,t}\|^2],
\end{eqnarray*}
where the first inequality follows from \eqref{prox} and the second inequality is obtained from Lemma~\ref{mc}(ii).

Similar to what is done in the proof of Theorem~\ref{Sthm:stochastic_sub} and by the same argument as in \eqref{eq:thm012}-\eqref{eq:thm0012} we have
\begin{eqnarray*}
\nonumber
&&\EE_{\S_t}[ \psi_{1/\bar \rho,\widehat{V}_{\mu,t}^{1/2}}(x_{t+1})]  \leq
\psi_{1/\bar \rho,\widehat{V}_{\mu,t-1}^{1/2}}(x_{t})+\bar \rho(1-\beta_{1,t}) \alpha_t \EE_{\S_t}[ \dotp{\bar x_t-x_t ,g_{\mu,t}}]
\\
\nonumber&+& \frac{ \bar \rho}{2}  \EE_{\S_t}  [\|(\widehat{V}_{\mu,t}^{1/4}-\widehat{V}_{\mu,t-1}^{1/4})(x_{t} - \bar x_t)\|^2]+\frac{\beta_{1,t}^2 \bar \rho}{2}  \EE_{\S_t}  [\|\widehat{V}_{\mu,t}^{1/4}(x_{t} - \bar x_t)\|^2]
\\ &+&
 \frac{ \bar \rho\alpha_t^2}{2}  \big( \EE_{\S_t}[ \|\widehat{V}_{\mu,t}^{-1/4}m_{\mu,t}\|^2] +  \|\widehat{V}_{\mu,t-1}^{-1/4} m_{\mu,t-1}\|^2 \big)  .
\end{eqnarray*}
Now, let  $ \gamma_{\mu,t} := \EE_{\S_t}[G_{\mu}(x_t, \xi_t,u_t)]$ for all $t\in (T]$.
Taking expectations on both sides of the above inequality w.r.t. $\S_0,\S_1,\ldots,\S_{t-1}$, and using the law of total expectation, we obtain
\begin{eqnarray}\label{cx}
\nonumber
&& \EE[ \psi_{1/\bar \rho,\widehat{V}_{\mu,T}^{1/2}}(x_{T+1})] -\psi_{1/\bar \rho,\widehat{V}_{\mu,-1}^{1/2}}(x_{0})
\leq \bar \rho\sum_{t = 0}^T(1-\beta_{1,t})\alpha_t\EE [  \dotp{\bar x_t - x_t , \gamma_{\mu,t} }]
\\\nonumber
&+&\frac{\bar \rho}{2} \sum_{t = 0}^T \EE[\|(\widehat{V}_{\mu,t}^{1/4}-\widehat{V}_{\mu,t-1}^{1/4})(x_{t} - \bar x_t)\|^2]   + \frac{\bar \rho}{2}  \sum_{t=0}^{T}\beta_{1,t}^2\EE[\|\widehat{V}_{\mu,t}^{1/4}(x_{t} - \bar x_t)\|^2]\\
&+&\frac{\bar \rho}{2} \sum_{t=0}^{T}\alpha_t^2 \big( \EE[ \|\widehat{V}_{\mu,t}^{-1/4}m_{\mu,t}\|^2] +  \EE[ \|\widehat{V}_{\mu,t-1}^{-1/4} m_{\mu,t-1}\|^2] \big).
\end{eqnarray}
Next, we upper bound each term on the R.H.S. of the above inequality. By the same argument as in \eqref{eq:fcccthmi1}-\eqref{pp}, we have
\begin{eqnarray}\label{ppz}
\nonumber
&& \frac{\bar{\rho}}{2} \sum_{t=0}^T \EE[ \|(\widehat{V}_{\mu,t}^{1/4}-\widehat{V}_{\mu,t-1}^{1/4})(x_{t} - \bar x_t)\|^2] +\frac{\bar{\rho}}{2} \sum_{t=0}^T  \beta_{1,t}^2 \EE[ \|\widehat{V}_{\mu,t}^{1/4}(x_{t} - \bar x_t)\|^2] \\
\nonumber
&\leq & \bar{\rho} C_{2,\mu,T},\\
\nonumber
&& \frac{\bar{\rho}}{2}  \sum_{t=0}^{T}  \alpha_t^2   \big( \EE[ \|\widehat{V}_{\mu,t}^{-1/4}m_{\mu,t}\|^2] +  \EE[ \|\widehat{V}_{\mu,t-1}^{-1/4} m_{\mu,t-1}\|^2 ] \big)\\
& \leq & \bar{\rho} \sum_{t=0}^{T} \alpha_t^2 C_{1,\mu,t},
\end{eqnarray}
where $C_{1,\mu ,t}$ and $C_{2,\mu,T}$ are defined in \eqref{eqn:c2}.

Further, by Lemma~\ref{M2}, the function $f_{\mu}$ is weakly convex. Thus, \eqref{eqn:ex_G_mu} in conjunction with Lemma~\ref{lem:weak:hyp}(ii) yields
 \begin{eqnarray}\label{fmu}
\nonumber
 \EE [  \dotp{\bar x_t - x_t , \gamma_{\mu,t} }]  &=& \dotp{x_t-\bar x_t  , \nabla f_{\mu}(x_t) }
\\ \nonumber &\geq& f_{\mu}(x_t)-f_{\mu}(\bar x_t)- \frac{\rho}{2}\|\widehat{V}_{\mu,t}^{1/4}(x_t - \bar x_t)\|^2 \\
\nonumber
   & =&  ( f(\bar{x}_t)- f_{\mu}(\bar{x}_t))  - (f(x_t)-f_{\mu}({x}_t)) \\\nonumber
   &+& f(x_t) - f(\bar x_t) - \frac{\rho}{2}\|\widehat{V}_{\mu,t}^{1/4}(x_t - \bar x_t)\|^2\\
\nonumber
   & \geq& - | f(\bar{x}_t)- f_{\mu}(\bar{x}_t)|  - |f(x_t)-f_{\mu}({x}_t)| \\
   &+& f(x_t) - f(\bar x_t) - \frac{\rho}{2}\|\widehat{V}_{\mu,t}^{1/4}(x_t - \bar x_t)\|^2.
\end{eqnarray}

Substituting \eqref{ppz}--\eqref{fmu} into \eqref{cx}, utilizing the inequality $\psi^*\leq \psi(x_{T+1})$ and rearranging the terms, we obtain
\begin{eqnarray}\label{eqn:main_ineqsz}
&& \frac{1}{ \sum_{t=0}^{T}\alpha_t}\sum_{t=0}^T (1-\beta_{1,t})\alpha_t \EE[f(x_t) - f(\bar x_t) - \frac{\rho}{2}\|\widehat{V}_{\mu,t}^{1/4}(x_t - \bar x_t)\|^2] \notag\\
& \leq &  \frac{1}{\bar \rho \sum_{t=0}^{T}\alpha_t}\Big(\psi_{1/\bar \rho,\widehat{V}_{\mu,-1}^{1/2}}(x_{0}) - \psi^*
+ \bar \rho  \sum_{t=0}^{T} \alpha_t^2 C_{1,\mu,t} +{\bar \rho}C_{2,\mu,T} \Big) +  C_{3,\mu,T},
\end{eqnarray}
where $ C_{3,\mu,T}$ is defined in \eqref{eqn:c2}.

Now, by the same argument as in \eqref{klks}, we get
\begin{eqnarray}\label{zerooklks}
\nonumber
&&(1-\beta_{1,t^*})\left(f(x_{t^*}) - f(\bar x_{t^*}) - \frac{\rho}{2}\|\widehat{V}_{\mu,t^*}^{1/4}(x_{t^*} - \bar x_{t^*})\|^2\right) \\
&\geq &  \frac{ (1-\beta_{1,t^*}) (\bar \rho-\rho)}{\bar \rho^2} \|\nabla \psi_{1/\bar \rho,\widehat{V}_{\mu,t^*}^{1/2}}(x_{t^*})\|^2.
\end{eqnarray}
Substituting \eqref{zerooklks} into the left hand side of \eqref{eqn:main_ineqsz}, we arrive at the desired result as stated in the theorem.
\end{proof}

Compared to the convergence rate of \textsc{Fema}, Theorem~\ref{thm:stochastic_subzero} exhibits
additional error $C_{3,\mu,T}$ due to the use of zeroth order  gradient estimates. Roughly speaking, if we choose the smoothing parameter $\mu$ reasonably small, then the error $C_{3,\mu,T}$ would reduce, leading to non-dominant effect on the convergence rate of projected \textsc{Zema}. For other terms containing $C_{1,\mu,t}$ and $C_{2,\mu,T}$, the zeroth order gradient estimates are more involved, relying on $\|g_{\mu,k}\|_1$ and $ \|\widehat{\upsilon}_{\mu,t}^{1/2} \|_1$. To bound these terms, we introduce the following assumption on the the function  $F(\cdot,\xi)$.
\begin{assum}\label{it5}
$F(x, \xi)$ is $L_F$-Lipschitz continuous, i.e., for all $x,y \in \dom h$,
$$
|F(x,\xi) - F(y,\xi) | \leq L_F \|x-y\|, \qquad \forall \xi \in U.
$$
\end{assum}

\begin{lem}\label{lem16}
Let $f_{\mu}(x)$ be the smoothing function  defined in \eqref{rand_smooth_func}. Then, under Assumption~\ref{it5}, we have
\begin{eqnarray*}
   |f(x)-f_{\mu}(x) |&\leq & \mu L_F. \label{ww} 
\end{eqnarray*}
\end{lem}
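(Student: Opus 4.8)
The plan is to first upgrade Assumption~\ref{it5}, which asserts Lipschitz continuity only of the random function $F(\cdot,\xi)$, to Lipschitz continuity of its expectation $f(\cdot)=\EE_{\xi}[F(\cdot,\xi)]$. This follows at once: for any $x,y\in\dom h$,
\[
|f(x)-f(y)| = \bigl|\EE_{\xi}[F(x,\xi)-F(y,\xi)]\bigr|
\le \EE_{\xi}\bigl[|F(x,\xi)-F(y,\xi)|\bigr]
\le L_F\|x-y\|,
\]
where the first inequality is Jensen's inequality (equivalently, the triangle inequality for integrals) and the second applies Assumption~\ref{it5} pointwise in $\xi$. Hence $f$ is itself $L_F$-Lipschitz on $\dom h$.

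Next I would unfold the definition of the randomized smoothing in \eqref{rand_smooth_func}. Writing $f(x)=\EE_{u\sim U_b}[f(x)]$ and using linearity of expectation,
\[
|f(x)-f_{\mu}(x)| = \bigl|\EE_{u\sim U_b}[f(x)-f(x+\mu u)]\bigr|
\le \EE_{u\sim U_b}\bigl[|f(x)-f(x+\mu u)|\bigr].
\]
Applying the Lipschitz bound just established with $y=x+\mu u$ gives $|f(x)-f(x+\mu u)|\le L_F\|\mu u\| = \mu L_F\|u\|$, and since $u$ ranges over the closed unit Euclidean ball $B$ we have $\|u\|\le 1$ surely. Taking expectations yields $|f(x)-f_{\mu}(x)|\le \mu L_F\,\EE_{u\sim U_b}[\|u\|]\le \mu L_F$, which is the asserted bound.

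There is no genuine technical obstacle here; the argument is two applications of Jensen's inequality together with the elementary observation that the smoothing perturbation $\mu u$ has norm at most $\mu$. The only point that must not be glossed over is that the hypothesis is stated for $F$ rather than for $f$, so the intermediate step establishing that $f$ inherits the Lipschitz constant $L_F$ is needed before the main estimate can proceed.
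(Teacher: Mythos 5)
Your proof is correct and follows essentially the same route as the paper: both expand $f_\mu$ as an expectation, apply Jensen's inequality, and invoke the Lipschitz bound with $\|\mu u\|\le\mu$; the only cosmetic difference is that you take the $\xi$- and $u$-expectations in two stages while the paper takes them jointly over $(\xi,u)$. Your explicit intermediate step showing that $f$ inherits the Lipschitz constant from $F$ is a harmless (and slightly more careful) elaboration of what the paper leaves implicit.
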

\begin{proof}
Let  $\S  \equiv  (\xi,u)$. Then, we have 
\begin{equation}
  |f(x)-f_{\mu}(x) | \leq  \EE_{\S} \big[|F(x,\xi) - F(x+\mu  u,\xi) |\big] \leq \mu L_F . 
\end{equation}
\end{proof}

We now provide convergence rate of projected \textsc{Zema}.

\begin{cor}\label{cor:complexity}
Under Assumption~\ref{it5} and the same conditions as in Theorem~\ref{thm:stochastic_subzero}, for all $t\in (T]$ let the parameters
be set to
\begin{align*}
\beta_{1,t}&=\beta_1\pi^{t-1},~~~ \pi\in (0,1), ~~~ \bar \rho=2\rho, ~~~\mu = \frac{d}{\sqrt{T+1}}, \quad \textnormal{and} \quad \alpha_t =\frac{\alpha}{\sqrt{T+1}},
\end{align*}
for some $\alpha>0$. Then, for $x_{t^*}$ generated by Algorithm~\ref{alg:subgradientzero}, we have
\begin{equation*}
\EE \left[\|\nabla \psi_{1/(2\rho),{\widehat{V}}_{\mu,t^*}^{1/2}}(x_{t^*})\|^2\right] \leq O\Big(\frac{d^2}{\sqrt{T}}\Big).
\end{equation*}
\end{cor}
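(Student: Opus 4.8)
The plan is to start from the bound \eqref{thm} of Theorem~\ref{thm:stochastic_subzero} and simply substitute the prescribed parameters $\beta_{1,t}=\beta_1\pi^{t-1}$, $\bar\rho=2\rho$, $\mu=d/\sqrt{T+1}$, $\alpha_t=\alpha/\sqrt{T+1}$, after bounding each of $C_{1,\mu,t}$, $C_{2,\mu,T}$, $C_{3,\mu,T}$ in terms of $d$ and $T$. The first step is to control the zeroth-order estimate. Under Assumption~\ref{it5}, for any $\xi_t,u_t$ we have $|F(x_t+\mu u_t,\xi_t)-F(x_t,\xi_t)|\le L_F\mu\|u_t\|\le L_F\mu$, so from \eqref{ppl} it follows that $\|g_{\mu,t}\|\le (d/\mu)L_F\mu\|u_t\|^2\le dL_F$; hence $\|g_{\mu,t}\|_\infty\le dL_F$ and $\|g_{\mu,t}\|_1\le\sqrt{d}\,dL_F=d^{3/2}L_F$. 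Since $\widehat\upsilon_{\mu,t}$ is formed from convex combinations and componentwise maxima of past $g_{\mu,k}^2$, each coordinate obeys $(\widehat\upsilon_{\mu,t})_i\le (dL_F)^2$; this is exactly the zeroth-order analogue of Lemma~\ref{lm:infty} with $G_\infty$ replaced by $dL_F$, and in particular $\EE[\|\widehat\upsilon_{\mu,t}^{1/2}\|_1]\le d^2L_F$.

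The second step bounds the three constants with $\beta_{1,t}=\beta_1\pi^{t-1}$ and $\bar\rho=2\rho$, mirroring the proof of Corollary~\ref{Scor:complexity}. Using $\tau=\beta_1/\sqrt{\beta_2}<1$ and $\sum_{k=0}^t\tau^{t-k}\le 1/(1-\tau)$ together with $\|g_{\mu,k}\|_1\le d^{3/2}L_F$ gives, from \eqref{eqn:c2},
$$
C_{1,\mu,t}\le C_1:=\frac{d^{3/2}L_F}{(1-\tau)(1-\beta_1)\sqrt{(1-\beta_2)(1-\beta_3)}}=O(d^{3/2}).
$$
Likewise $\sum_{t=0}^T\beta_1^2\pi^{2(t-1)}$ is bounded by a constant and $\EE[\|\widehat\upsilon_{\mu,t}^{1/2}\|_1]\le d^2L_F$, so $C_{2,\mu,T}\le C_2=O(d^2)$. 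Finally, by Lemma~\ref{lem16} each of the two maxima in $C_{3,\mu,T}$ is at most $\mu L_F$, whence $C_{3,\mu,T}\le 2\mu L_F=2dL_F/\sqrt{T+1}=O(d/\sqrt{T})$.

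The third step inserts these bounds and $\alpha_t=\alpha/\sqrt{T+1}$ into \eqref{thm}. Since $\sum_{t=0}^T\alpha_t=\alpha\sqrt{T+1}$ and $\sum_{t=0}^T\alpha_t^2 C_{1,\mu,t}\le\alpha^2C_1$, the first fraction in \eqref{thm} becomes $\dfrac{2\rho\Delta_\psi+4\rho^2(\alpha^2C_1+C_2)}{\rho(1-\beta_1)\alpha\sqrt{T+1}}=O\!\left(\dfrac{d^2}{\sqrt{T}}\right)$, where the $d^2$ comes from $C_2$ dominating $C_1$, while the remaining term equals $\dfrac{4\rho^2C_{3,\mu,T}}{\rho}=8\rho d L_F/\sqrt{T+1}=O(d/\sqrt{T})$. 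Adding the two contributions yields the claimed $O(d^2/\sqrt{T})$ rate; one may additionally optimize the leading constant over $\alpha$ exactly as in Corollary~\ref{Scor:complexity} without changing the order. The only genuinely delicate point is tracking the dimension dependence: the factor $d$ in the finite-difference formula \eqref{ppl} forces $\|g_{\mu,t}\|_\infty\le dL_F$ rather than $L_F$, which propagates to $\EE[\|\widehat\upsilon_{\mu,t}^{1/2}\|_1]=O(d^2)$ and hence to the dominant $O(d^2/\sqrt{T})$ term; one must also check that the bias term $C_{3,\mu,T}$ stays at the lower order $O(d/\sqrt{T})$ under the choice $\mu=d/\sqrt{T+1}$ (it does, and this $\mu$ satisfies the mild requirement $\mu>0$ of Theorem~\ref{thm:stochastic_subzero} for every $T$).
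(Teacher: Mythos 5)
Your proposal is correct and follows essentially the same route as the paper: bound $C_{3,\mu,T}\le 2\mu L_F$ via Lemma~\ref{lem16}, derive $\|g_{\mu,t}\|_\infty\le dL_F$ from \eqref{ppl} and Assumption~\ref{it5}, and substitute the resulting bounds on $C_{1,\mu,t}$ and $C_{2,\mu,T}$ together with $\alpha_t=\alpha/\sqrt{T+1}$, $\bar\rho=2\rho$ into \eqref{thm}. The only (immaterial) deviations are that you bound $\|g_{\mu,k}\|_1$ through the $\ell_2$-norm, obtaining $C_1=O(d^{3/2})$ where the paper uses $\|g_{\mu,k}\|_1\le d\tilde G_\infty=O(d^2)$, and that you keep $\alpha$ as a fixed dimension-independent constant instead of optimizing it as in Corollary~\ref{Scor:complexity}; neither changes the $O(d^2/\sqrt{T})$ order.
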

\begin{proof}
We combine Lemma~\ref{lem16} with $C_{3,\mu,T}$ defined in \eqref{eqn:c2} to obtain
\begin{eqnarray}\label{fff6}
 \nonumber C_{3,\mu,T} &=&\EE \big[  \max_{t \in (T]}|f(\bar{x}_t)-f_{\mu}(\bar{x}_t)| \big]+\EE \big[  \max_{t \in (T]}|f(x_t)-f_{\mu}(x_t)| \big]\\
 \nonumber
  &\leq& 2 \mu L_F\\
  &=& \frac{2d L_F}{\sqrt{T+1}},
\end{eqnarray}
where the last equality follows since $ \mu = \frac{d}{\sqrt{T+1}}$.

By Assumption~\ref{it5} and \eqref{ppl}, we have
\begin{eqnarray}\label{pffy}
\|G_\mu(x,\xi,u) \|_{\infty} &=\frac{d}{\mu}\|[F(x,\xi)-F(x+\mu u,\xi)]u  \|_{\infty} \leq \tilde{G}_{\infty},
\end{eqnarray}
where $\tilde{G}_{\infty} =  d L_F $.\\
Now, substituting $\beta_{1,t}=\beta_1\lambda^{t-1}$ in  $C_{2,\mu,T}$ defined in \eqref{eqn:c2}, we obtain
\begin{eqnarray} \label{eqn:bc2}
\nonumber
C_{2,\mu,T} &=& \frac{D_{\infty}^2}{2} \big(\sum_{t=0}^T  \beta_1^2 \pi^{2(t-1)} \EE[\|\widehat{\upsilon}_{\mu,t}^{1/2}\|_1] +  \EE[\|\widehat{\upsilon}_{\mu,T}^{1/2}\|_1]\big)\\
\nonumber
&\leq& \frac{dD_{\infty}^2 \tilde{G}_{\infty}}{2} \big(\sum_{t=0}^T  \beta_1^2 \pi^{2(t-1)}  + 1 \big)\\
&\leq& \frac{dD_{\infty}^2 \tilde{G}_{\infty}}{2} \big(\frac{\beta_1^2}{1-2\pi} +1 \big)=: C_2,
\end{eqnarray}	
where the first inequality follows from \eqref{pffy}.

Further, for $C_{1,\mu,t}$ defined in \eqref{eqn:c2} and using \eqref{pffy}, we have that
\begin{eqnarray}\label{eqn:bc1}
\nonumber
  C_{1,\mu,t} &= & \frac{  \sum_{k= 0}^t \tau^{t-k} \EE [\|{g_{\mu,k}}\|_1]}{ (1-\beta_1)\sqrt{(1-\beta_2)(1-\beta_3)} } \\ \nonumber
&\leq&   \frac{  d  \tilde{G}_{\infty} \sum_{k= 0}^t \tau^{t-k} }{(1-\beta_1)\sqrt{(1-\beta_2)(1-\beta_3)} }
\\
&\leq& \frac{   d  \tilde{G}_{\infty} }{(1-\tau) (1-\beta_1)\sqrt{(1-\beta_2)(1-\beta_3)} }=:C_1,
\end{eqnarray}
where  the last inequality uses $$\sum_{t=0}^{T} \tau^t \leq \frac{1}{1-\tau}.$$
Now, substituting \eqref{fff6}, \eqref{eqn:bc2} and \eqref{eqn:bc1} into \eqref{thm}, using $\alpha_t=\frac{\alpha}{\sqrt{T+1}}$, and $\bar \rho=2\rho$, we get
\begin{eqnarray}\label{Projec}
\nonumber
\EE \left[\|\nabla \psi_{1/(2 \rho),\widehat{V}_{\mu,t^*}^{1/2}}(x_{t^*})\|^2\right] &\leq &  \frac{ 2\rho \Delta_\psi+4\rho^2 (\alpha^2 C_1+C_2) }{\rho (1-\beta_1)\sqrt{T+1}\alpha}\\
&+&\frac{2 d L_F }{\sqrt{T+1}}.
\end{eqnarray}
Minimizing the right-hand side of \eqref{Projec}
w.r.t. $\alpha$ yields the choice $\alpha=\sqrt{\frac{\Delta_\psi +2 \rho C_2}{2\rho C_1}}$. Thus,
\begin{eqnarray*}
\nonumber\EE \left[\|\nabla \psi_{1/(2 \rho),\widehat{V}_{\mu,t^*}^{1/2}}(x_{t^*})\|^2\right] & \leq  &  \frac{ 4\sqrt{2 \rho C_1}\sqrt{ \Delta_\psi+2 \rho C_2}}{ (1-\beta_1)\sqrt{T+1}} + \frac{2 d L_F }{\sqrt{T+1}}\\
&= & O\Big(\frac{d^2}{\sqrt{T}} \Big).
\end{eqnarray*}
This completes the proof.
\end{proof}

\begin{rem}
The convergence of \textsc{Zema} achieves the similar rate to \cite{ghadimi2016mini}, which also implies zeroth order projected SGD but ours use adaptive learning rates and momentum techniques. In addition, our theoretical analysis does not necessary need batching and/or smoothness of the function $f$.
\end{rem}

%

\begin{thm}[\textbf{proximal \textsc{Zema}}]\label{thm:stochastic_sub2zero}
Suppose Assumptions~\ref{it1}--\ref{it2} hold and $\|x-y\|_{\infty} \leq D_{\infty}$ for all $x,y \in \dom h$. Further, for all $ t \in (T]$, let
\begin{align*}
& 0 \leq \beta_{1,t}\leq\beta_1, && \{\beta_i\}_{i=1}^3  \in [0,1),   &\tau = \frac{\beta_1}{\sqrt{\beta_2}}<1, \\
& \bar \rho \in \big(\rho,2\rho\big],  && \mu>0, & 0<\alpha_t \leq \frac{1}{\bar \rho}.
\end{align*}
Then, for $x_{t^*}$ generated by Algorithm~\ref{alg:subgradientzero}, we have
\begin{eqnarray}\label{cc}
\nonumber
\EE \left[\|\nabla \psi_{1/\bar \rho,\widehat{V}_{\mu,t^*}^{1/2}}(x_{t^*})\|^2\right] &\leq&  \frac{ \bar \rho \Delta_{\psi} + \bar \rho^2 \big( \sum_{t=0}^{T} \alpha_t^2 C_{1,\mu,t}+  C_{2,\mu,T} \big)}{(\bar \rho-\rho)\sum_{t=0}^T \alpha_t }\\
&+&\frac{\bar{\rho}C_{3,\mu,T}}{\bar{\rho}-\rho}.
\end{eqnarray}
Here, $\Delta_{\psi}= \psi_{1/\bar \rho,\widehat{V}_{\mu,-1}^{1/2}}(x_{0}) - \psi^*$,
\begin{align}\label{zprox:pj}
\nonumber
C_{1,\mu,t} &= \frac{2 \sum_{k=0}^t\tau^{t-k}\EE [\|g_{\mu,k}\|_1]}{(1-\beta_1)\sqrt{(1-\beta_2)(1-\beta_3)}} +\frac{d\tilde{G}_{\infty}}{\sqrt{(1-\beta_2)(1-\beta_3)}}+ \frac{d \tilde{G}_{\infty}^2}{\lambda_{\min}(\widehat{V}_{\mu,t}^{1/2})}, \\\nonumber
C_{2,\mu,T} &= \frac{D_\infty^2}{2} (\EE[ \|\widehat{\upsilon}_{\mu,T}^{1/2} \|_1]+\sum_{t=0}^T\beta_{1,t}^2\EE[ \|\widehat{\upsilon}_{\mu,t}^{1/2} \|_1]),\\
C_{3,\mu,T} &=\EE \big[  \max_{t \in (T]}|f(x_t)-f_{\mu}(x_t)| \big]+\EE \big[  \max_{t \in (T]}|f_{\mu}(\bar{x}_t)-f(\bar{x}_t)| \big].
\end{align}
\end{thm}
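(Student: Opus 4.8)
The plan is to run the proof of Theorem~\ref{Sthm:stochastic_sub2} almost verbatim, replacing the first-order stochastic subgradient $g_t$ by the smoothed estimator $g_{\mu,t}$ and using two facts about randomized smoothing: $g_{\mu,t}$ is an unbiased estimator of $\nabla f_\mu$ (relation~\eqref{eqn:ex_G_mu}), and $f_\mu$ is $(\rho,Q)$-weakly convex (Lemma~\ref{M2}). The only genuinely new feature is that the weak-convexity inequality will be phrased in terms of $f_\mu$, while the target Moreau envelope is built from $f$; the resulting mismatch is precisely what creates $C_{3,\mu,T}$. I will use the conditioning $\EE_{\S_t}[\cdot]$ on $\S_0,\dots,\S_{t-1}$ with $\S_t\equiv(\xi_t,u_t)$ from the proof of Theorem~\ref{thm:stochastic_subzero}, and set $\bar x_t=\textnormal{prox}_{\psi/\bar\rho,\widehat V_{\mu,t}^{1/2}}(x_t)$, $\delta_t=1-\alpha_t\bar\rho\in[0,1)$, $\gamma_{\mu,t}=\EE_{\S_t}[G_\mu(x_t,\xi_t,u_t)]=\nabla f_\mu(x_t)$, and $\bar\gamma_t=\EE_{\S_t}[G(\bar x_t,\xi_t)]\in\partial f(\bar x_t)$, the subgradient supplied by the oracle of Assumption~\ref{it2}; under Assumption~\ref{it5} (implicitly in force, since $\tilde G_\infty$ appears in $C_{1,\mu,t}$) one has $\|g_{\mu,t}\|_\infty\le\tilde G_\infty$ almost surely by~\eqref{pffy}, while $f=\EE_\xi[F(\cdot,\xi)]$ is $L_F$-Lipschitz, so $\|\bar\gamma_t\|^2\le d\,\tilde G_\infty^2$. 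Applying Lemma~\ref{slem:prox_ident} with $Q=\widehat V_{\mu,t}^{1/2}$ to write $\bar x_t=\textnormal{prox}_{\alpha_t h,\widehat V_{\mu,t}^{1/2}}\!\big(\alpha_t\bar\rho x_t-\alpha_t\widehat V_{\mu,t}^{-1/2}\bar\gamma_t+\delta_t\bar x_t\big)$ and then using nonexpansiveness of the scaled proximal map (Lemma~\ref{mc}(ii)) reproduces, after expanding the square, the chain leading to~\eqref{jk}, namely $\EE_{\S_t}[\|\widehat V_{\mu,t}^{1/4}(x_{t+1}-\bar x_t)\|^2]\le\delta_t^2\,\EE_{\S_t}[\|\widehat V_{\mu,t}^{1/4}(x_t-\bar x_t)\|^2]+2\alpha_t^2\big(\EE_{\S_t}[\|\widehat V_{\mu,t}^{-1/4}\bar\gamma_t\|^2]+\EE_{\S_t}[\|\widehat V_{\mu,t}^{-1/4}m_{\mu,t}\|^2]\big)+2\delta_t\alpha_t\,\EE_{\S_t}[\dotp{\bar x_t-x_t,\,m_{\mu,t}-\bar\gamma_t}]$.

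The step requiring care is the cross term. Write $m_{\mu,t}-\bar\gamma_t=\beta_{1,t}(m_{\mu,t-1}-g_{\mu,t})+(g_{\mu,t}-\bar\gamma_t)$; the first piece is handled by the Cauchy--Schwarz splitting of~\eqref{eq:thm4.2.2} (using $(\widehat V_{\mu,t})_{ii}\ge(\widehat V_{\mu,t-1})_{ii}$), contributing the $m_{\mu,t-1},g_{\mu,t}$ norm terms of $C_{1,\mu,t}$ along with a $\beta_{1,t}^2\|\widehat V_{\mu,t}^{1/4}(\bar x_t-x_t)\|^2$ term. For the second piece one takes conditional expectations to obtain $\EE_{\S_t}[\dotp{\bar x_t-x_t,g_{\mu,t}-\bar\gamma_t}]=\dotp{\bar x_t-x_t,\nabla f_\mu(x_t)-\bar\gamma_t}$; here hypomonotonicity (as in~\eqref{eq:thm4.2.3}) is unavailable because $\nabla f_\mu(x_t)$ and $\bar\gamma_t$ are subgradients of \emph{different} functions, so I would instead apply Lemma~\ref{lem:weak:hyp}(ii) twice: to $f_\mu$ at $x_t$ (legitimate by Lemma~\ref{M2}), giving $\dotp{\bar x_t-x_t,\nabla f_\mu(x_t)}\le f_\mu(\bar x_t)-f_\mu(x_t)+\tfrac{\rho}{2}\|\widehat V_{\mu,t}^{1/4}(x_t-\bar x_t)\|^2$, and to $f$ at $\bar x_t$ with subgradient $\bar\gamma_t$, giving $\dotp{x_t-\bar x_t,\bar\gamma_t}\le f(x_t)-f(\bar x_t)+\tfrac{\rho}{2}\|\widehat V_{\mu,t}^{1/4}(x_t-\bar x_t)\|^2$. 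Adding the two, the $f$-values no longer cancel the $f_\mu$-values; what survives is $\big(f_\mu(\bar x_t)-f(\bar x_t)\big)+\big(f(x_t)-f_\mu(x_t)\big)\le|f(x_t)-f_\mu(x_t)|+|f(\bar x_t)-f_\mu(\bar x_t)|$ besides $\rho\|\widehat V_{\mu,t}^{1/4}(x_t-\bar x_t)\|^2$. Since $\delta_t\le1$, substituting this back and then following the passage from~\eqref{jk} through~\eqref{eq:thm0012} to~\eqref{befzz} -- where $\rho<\bar\rho\le2\rho$ reduces the coefficient of $\|\widehat V_{\mu,t}^{1/4}(x_t-\bar x_t)\|^2$ to $-2\alpha_t(\bar\rho-\rho)$ -- gives
\begin{equation*}
\EE_{\S_t}\!\big[\|\widehat V_{\mu,t}^{1/4}(x_{t+1}-\bar x_t)\|^2\big]\le\|\widehat V_{\mu,t-1}^{1/4}(x_t-\bar x_t)\|^2-2\alpha_t(\bar\rho-\rho)\,\EE_{\S_t}\!\big[\|\widehat V_{\mu,t}^{1/4}(x_t-\bar x_t)\|^2\big]+\alpha_t^2\,\EE_{\S_t}[C_{1,1,\mu,t}]+\EE_{\S_t}[C_{1,2,\mu,t}]+2\alpha_t\big(|f(x_t)-f_\mu(x_t)|+|f(\bar x_t)-f_\mu(\bar x_t)|\big),
\end{equation*}
with $C_{1,1,\mu,t},C_{1,2,\mu,t}$ the zeroth-order analogues of the quantities $C_{1,1,t},C_{1,2,t}$ appearing in~\eqref{befzz}.

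The rest is bookkeeping identical to Theorem~\ref{Sthm:stochastic_sub2}. Combining the last display with~\eqref{prox}, taking total expectations, summing over $t=0,\dots,T$, telescoping the envelope terms and using $\psi^*\le\psi_{1/\bar\rho,\widehat V_{\mu,T}^{1/2}}(x_{T+1})$ yields, as in~\eqref{pkpp1},
\begin{equation*}
\bar\rho(\bar\rho-\rho)\sum_{t=0}^{T}\alpha_t\,\EE\!\big[\|\widehat V_{\mu,t}^{1/4}(x_t-\bar x_t)\|^2\big]\le\psi_{1/\bar\rho,\widehat V_{\mu,-1}^{1/2}}(x_0)-\psi^*+\tfrac{\bar\rho}{2}\sum_{t=0}^{T}\big(\alpha_t^2\,\EE[C_{1,1,\mu,t}]+\EE[C_{1,2,\mu,t}]\big)+\bar\rho\Big(\sum_{t=0}^{T}\alpha_t\Big)C_{3,\mu,T},
\end{equation*}
the last term obtained by replacing each $|f(x_t)-f_\mu(x_t)|$ and $|f(\bar x_t)-f_\mu(\bar x_t)|$ by its maximum over $t\in(T]$ before taking expectations. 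One then bounds $\tfrac{\bar\rho}{2}\sum\alpha_t^2\EE[C_{1,1,\mu,t}]\le\bar\rho\sum\alpha_t^2C_{1,\mu,t}$ via Lemma~\ref{lem:spar} together with $(\widehat\upsilon_{\mu,t})_i\ge(1-\beta_2)(1-\beta_3)g_{\mu,t,i}^2$, $\|g_{\mu,t}\|_\infty\le\tilde G_\infty$, $\|\bar\gamma_t\|^2\le d\tilde G_\infty^2$ and $\widehat V_{\mu,t}^{-1/2}\preceq\lambda_{\min}(\widehat V_{\mu,t}^{1/2})^{-1}I$, and $\tfrac{\bar\rho}{2}\sum\EE[C_{1,2,\mu,t}]\le\bar\rho C_{2,\mu,T}$ by the H\"older/bounded-domain/telescoping argument of~\eqref{pp}--\eqref{fppz}. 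Dividing by $\bar\rho(\bar\rho-\rho)\sum_{t=0}^T\alpha_t$, rewriting the left side as $\EE[\|\widehat V_{\mu,t^*}^{1/4}(x_{t^*}-\bar x_{t^*})\|^2]$ through the sampling rule $\mathbb{P}(t^*=t)=\alpha_t/\sum_t\alpha_t$, and using $\EE[\|\nabla\psi_{1/\bar\rho,\widehat V_{\mu,t^*}^{1/2}}(x_{t^*})\|^2]=\bar\rho^2\,\EE[\|\widehat V_{\mu,t^*}^{1/4}(x_{t^*}-\bar x_{t^*})\|^2]$ (as at the end of the proof of Theorem~\ref{Sthm:stochastic_sub2}, via~\eqref{grad} and~\eqref{eqn:grad_form}) produces exactly~\eqref{cc}. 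The one place a careful argument is needed is the cross-term step: recognizing that $\nabla f_\mu(x_t)$ and $\bar\gamma_t$ must be compared through two separate weak-convexity inequalities rather than a single monotonicity inequality, so that the irreducible gap between $f$ and $f_\mu$ at $x_t$ and $\bar x_t$ accumulates into $C_{3,\mu,T}$ instead of cancelling as in the first-order proof; everything else is a mechanical transcription of the proof of Theorem~\ref{Sthm:stochastic_sub2} with $(g_t,G_\infty,\widehat V_t)$ replaced by $(g_{\mu,t},\tilde G_\infty,\widehat V_{\mu,t})$.
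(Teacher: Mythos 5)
Your proposal follows essentially the same route as the paper's own proof: the same reduction via Lemma~\ref{slem:prox_ident} and Lemma~\ref{mc}(ii) to the recursion \eqref{jkz}, the same splitting of the cross term, and—crucially—the same key step of applying Lemma~\ref{lem:weak:hyp}(ii) separately to $f_\mu$ at $x_t$ and to $f$ at $\bar x_t$ (the paper's \eqref{wqq}--\eqref{fdg}), so that the uncancelled $f$-versus-$f_\mu$ gaps accumulate into $C_{3,\mu,T}$, followed by identical bookkeeping. The argument is correct and matches the paper, including the observation that Assumption~\ref{it5} is implicitly needed to bound $\|g_{\mu,t}\|_\infty$ and $\bar\gamma_t$ by $\tilde G_\infty$.
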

\begin{proof}
Let $\delta_t=1-\alpha_t\bar  \rho$ and  $\bar \gamma_{t} = \EE_{t}[G(\bar{x}_t,\xi_t)]$. By the same argument as in \eqref{jk}, we have
\begin{eqnarray}\label{jkz}
\nonumber
 && \EE_{\S_t}[  \|\widehat{V}_{\mu,t}^{1/4}(x_{t+1} - \bar x_t)\|^2]\\
\nonumber
	&\leq& 2\alpha_t^2\big( \EE_{\S_t}[\| \widehat{V}_{\mu,t}^{-1/4} \bar \gamma_{t}\|^2] +  \EE_{\S_t}[\| \widehat{V}_{\mu,t}^{-1/4}m_{\mu,t}\|^2]\big) \\	
&+&  2 \delta_t \alpha_t \EE_{\S_t}[\dotp{(\bar x_t-x_t ), m_{\mu,t} - \bar \gamma_{t}}] + \delta_t^2\EE_{\S_t}[\|\widehat{V}_{\mu,t}^{1/4}(x_t - \bar x_t )\|^2].
	\end{eqnarray}
By definition of $m_{\mu,t}$ in Algorithm~\ref{alg:subgradientzero}, we have
\begin{eqnarray*}
\nonumber
&&\EE_{\S_t}[ \dotp{\bar x_t-x_t , m_{\mu,t}- \bar \gamma_{t}}]\\
\nonumber
&=& \EE_{\S_t}[ \dotp{\bar x_t-x_t ,\beta_{1,t} (m_{\mu,t-1}- g_{\mu,t})}] + \EE_{\S_t}[ \dotp{\bar x_t-x_t , g_{\mu,t}- \bar \gamma_{t}}].
\end{eqnarray*}
We multiple and divide the first term by $ \alpha_t^{1/2} \widehat{V}_{\mu,t}^{-1/4}$ and use the Cauchy-Schwarz inequality to obtain
\begin{eqnarray}\label{eq:thm4.2.2z}
\nonumber
&&
\EE_{\S_t}[\dotp{\bar x_t-x_t ,\beta_{1,t} (m_{\mu,t-1}-g_{\mu,t})}]\\
\nonumber
&\leq & \frac{\alpha_t}{2}\EE_{\S_t}[\|\widehat{V}_{\mu,t}^{-1/4}  (m_{\mu,t-1}-g_{\mu,t})\|^2]+ \frac{\beta_{1,t}^2}{2\alpha_t} \EE_{\S_t}[\|\widehat{V}_{\mu,t}^{1/4}(\bar{x}_t-x_t)\|^2] \\
&\leq & \alpha_t\|\widehat{V}_{\mu,t-1}^{-1/4} m_{\mu,t-1}\|^2+\alpha_t\EE_{\S_t}[\|\widehat{V}_{\mu,t}^{-1/4}  g_{\mu,t}\|^2]+ \frac{\beta_{1,t}^2}{2\alpha_t} \EE_{\S_t}[\|\widehat{V}_{\mu,t}^{1/4}(\bar{x}_t-x_t)\|^2],
\end{eqnarray}
where the second inequality follows since $(\widehat{V}_{\mu,t})_{ii}\geq (\widehat{V}_{\mu,t-1})_{ii}$ for all $i\in [d]$.

Note that by Lemmas \ref{M2} and \ref{lem:weak:hyp}(ii), for $x_t$, $\bar x_t\in\Bbb{R}^d$ with $\varpi\in \nabla f_{\mu}(x_t)$ and $\omega\in \partial f(\bar x_t)$, we have
\begin{eqnarray*}\label{dds}
  f_{\mu}(\bar x_t) &\geq & f_{\mu}(x_t)+\langle \varpi,\bar x_t-x_t \rangle-\frac{\rho}{2}\|\widehat{V}_{\mu,t}^{1/4}(\bar x_t-x_t) \|^2, \quad \text{and}\\
  f(x_t) &\geq & f(\bar x_t)+\langle \omega,x_t-\bar x_t \rangle-\frac{\rho}{2}\|\widehat{V}_{\mu,t}^{1/4}(x_t-\bar x_t) \|^2.
\end{eqnarray*}
Thus,
\begin{align}\label{wqq}
\nonumber\langle \varpi-\omega ,\bar x_t-x_t \rangle &\leq f_{\mu}(\bar x_t)-f_{\mu}(x_t)+f(x_t)-f(\bar x_t)+\rho\|\widehat{V}_{\mu,t}^{1/4}(x_t-\bar x_t) \|^2\\
&\leq    |f(x_t)-f_{\mu}(x_t)|+|f_{\mu}(\bar x_t)-f(\bar x_t)| +\rho\|\widehat{V}_{\mu,t}^{1/4}(x_t-\bar x_t) \|^2.
\end{align}
Then, \eqref{wqq} yields
\begin{align}\label{fdg}
\nonumber &\EE_{\S_t}[ \dotp{\bar x_t-x_t , g_{\mu,t}- \bar \gamma_{t}}]=\dotp{\bar x_t-x_t , \nabla f_{\mu}(x_t)- p}
\\&\leq    |f(x_t)-f_{\mu}(x_t)|+|f_{\mu}(\bar{x}_t)-f(\bar{x}_t)| +\rho\|\widehat{V}_{\mu,t}^{1/4}(x_t-\bar x_t) \|^2,
\end{align}
where $p\in \partial f(\bar{x}_t)$ and the first equality is by \eqref{eqn:ex_G_mu}.

Now, substituting \eqref{eq:thm4.2.2z} and \eqref{fdg} into \eqref{jkz} and using our assumption $\delta_t\leq 1$, we obtain
\begin{eqnarray}\label{befzz2}
\nonumber
 && \EE_{\S_t}[\|\widehat{V}_{\mu,t}^{1/4}(x_{t+1} - \bar x_t)\|^2]\\
\nonumber &\leq&  \|\widehat{V}_{\mu,t-1}^{1/4}(\bar{x}_t-x_t)\|^2
- \big(2\alpha_t(\bar \rho -\rho)+\alpha_t^2 \bar \rho (2\rho-\bar\rho)\big) \EE_{\S_t}[\|\widehat{V}_{\mu,t}^{1/4}(x_t - \bar x_t )\|^2]
\\  \nonumber
 &+&\EE_{\S_t}[ \underbrace{ \|(\widehat{V}_{\mu,t}^{1/4}-\widehat{V}_{\mu,t-1}^{1/4})( \bar x_t-x_{t})\|^2+\beta_{1,t}^2\|\widehat{V}_{\mu,t}^{1/4}( \bar x_t-x_{t})\|^2}_{C_{1,2,\mu,t}}]\\
&+&
\alpha_t^2 \big( \EE_{\S_t}[ \underbrace{2\| \widehat{V}_{\mu,t}^{-1/4}m_{\mu,t}\|^2+2
\|\widehat{V}_{\mu,t-1}^{-1/4}  m_{\mu,t-1}\|^2+2
\| \widehat{V}_{\mu,t}^{-1/4} \bar \gamma_{\mu,t}\|^2 + 2 \|\widehat{V}_{\mu,t}^{-1/4}  g_{\mu,t}\|^2}_{C_{1,1,\mu,t}}]
\big) \nonumber \\
\nonumber &+&  2\alpha_t \EE_{\S_t}[ |f(x_t)-f_{\mu}(x_t)| ]+2\alpha_t \EE_{\S_t}[ |f_{\mu}(\bar{x}_t)-f(\bar{x}_t)| ]  \\\nonumber
 &\leq&   \|\widehat{V}_{\mu,t-1}^{1/4}(\bar{x}_t-x_t)\|^2
- 2\alpha_t\big(\bar \rho-\rho\big) \EE_{\S_t}[ \|\widehat{V}_{\mu,t}^{1/4}(x_t - \bar x_t) \|^2 ]
 +\alpha_t^2 \EE_{\S_t}[C_{1,1,\mu,t}]
 \\&+& \EE_{\S_t}[C_{1,2,\mu,t}]+2\alpha_t \EE_{\S_t}[ |f(x_t)-f_{\mu}(x_t)| ]+2\alpha_t \EE_{\S_t}[ |f_{\mu}(\bar{x}_t)-f(\bar{x}_t)| ] ,
\end{eqnarray}
where the first inequality uses \eqref{eq:thm0012} and the second inequality follows since $\rho<\bar \rho \leq 2\rho$.

Now, using Assumption~\ref{it1}, for the sequence $x_t$ generated by Algorithm~\ref{alg:subgradientzero}, we have
\begin{eqnarray*}
\nonumber
&&\EE_{\S_t}[ \psi_{1/\bar \rho,\widehat{V}_{\mu,t}^{1/2}}(x_{t+1})] \leq  \EE_{\S_t}[ \psi(\bar x_t) + \frac{\bar \rho}{2} \|\widehat{V}_{\mu,t}^{1/4}(x_{t+1} - \bar x_t)\|^2]  \\
\nonumber
&\leq & \psi_{1/\bar \rho,\widehat{V}_{\mu,t-1}^{1/2}}(x_{t})
 + \frac{\bar \rho}{2}\Big( - 2\alpha_t\big(\bar \rho-\rho\big) \EE_{\S_t}[ \|\widehat{V}_{\mu,t}^{1/4}(x_t - \bar x_t) \|^2 ] +\alpha_t^2  \EE_{\S_t}[C_{1,1,\mu,t}]
 \\
 &+&  \EE_{\S_t} [C_{1,2,\mu,t}]+ 2\alpha_t \EE_{\S_t}[ |f(x_t)-f_{\mu}(x_t)| ]+ 2\alpha_t \EE_{\S_t}[ |f_{\mu}(\bar{x}_t)-f(\bar{x}_t)| ] \Big),
\end{eqnarray*}
where the second inequality is by \eqref{befzz2}. Taking expectations on both sides of the above inequality w.r.t. $\S_0,\S_1,\ldots,\S_{t-1}$, and using the law of total expectation, we obtain
\begin{eqnarray}\label{pkpp}
\nonumber
&& \bar \rho\alpha_t\big(\bar \rho-\rho\big) \sum_{t=0}^{T}\EE [\|\widehat{V}_{\mu,t}^{1/4}(x_t - \bar x_t) \|^2
]
\leq   \psi_{1/\bar \rho,\widehat{V}_{\mu,-1}^{1/2}}(x_{0}) - \EE [\psi_{1/\bar \rho,\widehat{V}_{\mu,T}^{1/2}}(x_{T+1})] \\&+&  \frac{\bar \rho }{2}\sum_{t=0}^{T}\alpha_t^2 \EE[C_{1,1,\mu,t}]+\frac{\bar \rho }{2}\sum_{t=0}^{T}\EE[C_{1,2,\mu,t}]+\bar{\rho} C_{3,\mu,T} \sum_{t=0}^T\alpha_t ),
\end{eqnarray}
where $C_{3,\mu,T}$ is defined in \eqref{zprox:pj}. Next, we upper bound each term on the R.H.S. of the above inequality.

Observe that
\begin{eqnarray*}
\nonumber
\frac{\bar \rho }{2}\sum_{t=0}^{T}\alpha_t^2 \EE[C_{1,1,\mu,t}] &\leq &   \frac{\bar{\rho}}{2}  \sum_{t=0}^{T}   \frac{  4 \alpha_t^2 \sum_{k=0}^t{\tau}^{t-k} \EE[ \|{g_{\mu,k}}\|_1] }{{(1-\beta_1)\sqrt{(1-\beta_2)(1-\beta_3)}}}
+ \frac{\bar{\rho}}{2}  \sum_{t=0}^{T}   \frac{ 2 \alpha_t^2  \EE[ \|{g_{\mu,t}}\|_1] }{{\sqrt{(1-\beta_2)(1-\beta_3)}}}
\\\nonumber&+&\frac{\bar{\rho}}{2}\sum_{t=0}^T2\alpha_t^2 \EE[\| \widehat{V}_{\mu,t}^{-1/4} \bar \gamma_{\mu,t}\|^2]\\ \nonumber
 &\leq &   \bar{\rho}  \sum_{t=0}^{T}   \frac{  2 \alpha_t^2 \sum_{k=0}^t\tau^{t-k} \EE[ \|{g_{\mu,k}}\|_1] }{(1-\beta_1){\sqrt{(1-\beta_2)(1-\beta_3)}}}+\frac{\bar{\rho}\sum_{t=0}^T\alpha_t^2d\tilde{G}_{\infty}}{\sqrt{(1-\beta_2)(1-\beta_3)}} \\&+&  \frac{\bar{\rho}\sum_{t=0}^{T}\alpha_t^2d \tilde{G}^2_{\infty}}{\lambda_{\min}(\widehat{V}_{\mu,t}^{1/2})}
=  \bar{\rho} \sum_{t=0}^{T} \alpha_t^2 C_{1,\mu,t},
\end{eqnarray*}
where the first inequality follows from Lemma~\ref{lem:spar}; the second inequality uses \eqref{pffy}; and the last equality is obtained from \eqref{zprox:pj}.

Further,  by the same argument as in \eqref{pp}, we have
\begin{eqnarray*}
\nonumber
C_{1,2,\mu,t} & \leq & \frac{\bar \rho}{2}  \sum_{t=0}^T  \big(\EE [\|\widehat{\upsilon}_{\mu,t}^{1/4}-\widehat{\upsilon}_{\mu,t-1}^{1/4} \|_1^2]+\beta^2_{1,t}\EE[\| \widehat{\upsilon}_{\mu,t}^{1/2}\|_1] \big) \|x_t - \bar{x}_t\|_{\infty}^2 \\
& \leq & \frac{\bar \rho D^2_{\infty}}{2} (\EE[\| \widehat{\upsilon}_{\mu,T}^{1/2}\|_1]+\sum_{t=0}^T\beta_{1,t}^2\EE[\| \widehat{\upsilon}_{\mu,t}^{1/2}\|_1])=   \bar{\rho} C_{2,\mu,T},
\end{eqnarray*}
where  $C_{2,\mu,T}$ is defined in \eqref{zprox:pj}.

It follows from Assumption~\ref{it1} that the function $\psi$ is weakly convex. This together with \eqref{grad} implies that
\begin{eqnarray*}
&& \frac{1}{\bar \rho^2}\nonumber \|\nabla \psi_{1/\bar \rho,\widehat{V}_{\mu,t^*}^{1/2}}(x_{t^*})\|^2
 =\EE [\|\widehat V_{\mu,t^*}^{1/4}(x_{t^*} - \bar x_{t^*})\|^2]\\
&=&\frac{1}{\sum_{t=0}^T \alpha_t}\sum_{t=0}^T \alpha_t \EE [\|\widehat V_{\mu,t}^{1/4}(x_{t} - \bar x_{t})\|^2]
\leq  \frac{1}{\bar \rho\big(\bar \rho-\rho\big)\sum_{t=0}^{T}\alpha_t}\\
\nonumber&&\cdot\Big(\psi_{1/\bar \rho,\widehat{V}_{\mu,-1}^{1/2}}(x_{0}) - \psi^*  + \bar{\rho}\sum_{t=0}^{T} \alpha_t^2 C_{1,\mu,t}+ \bar{\rho} C_{2,\mu,T}+\bar{\rho} C_{3,\mu,T}\sum_{t=0}^T \alpha_t \Big),
\end{eqnarray*}
where the last inequality follows since by our assumption $\psi^*$ is finite which implies that $ \psi_{1/\bar \rho,\widehat{V}_{\mu,T}^{1/2}}(x_{T+1})\geq \psi^*$.
\end{proof}

The following corollary shows how to choose $\alpha_t$,  $\bar \rho$ and the smoothing parameter $\mu$ appropriately in the zeroth-order proximal setting.

\begin{cor}\label{cor:da}
Under Assumption \ref{it5} and the same conditions as in Theorem~\ref{thm:stochastic_sub2zero}, for all $t \in (T]$ let the parameters be set to
\begin{align*}
&\beta_{1,t}=\beta_1\pi^{t-1}, && \pi\in (0,1), & \bar \rho=2\rho, \\
&\mu=\frac{d}{\sqrt{T+1}}, && \alpha_t =\frac{\alpha}{\sqrt{T+1}},  & \alpha \in (0,\frac{1}{2\rho}].
\end{align*}
Then, for $x_{t^*}$ returned by Algorithm~\ref{alg:subgradientzero}, we have
\begin{equation*}
\EE \left[\|\nabla \psi_{1/(2\rho),{\widehat{V}}_{\mu,t^*}^{1/2}}(x_{t^*})\|^2\right] \leq O\Big(\frac{d^2}{\sqrt{T}} \Big).
\end{equation*}

\end{cor}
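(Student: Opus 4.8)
The plan is to instantiate Theorem~\ref{thm:stochastic_sub2zero} at the stated parameters and then carry out the same one‑variable optimization as in the proof of Corollary~\ref{cor:complexity} and of the corollary following Theorem~\ref{Sthm:stochastic_sub2}. First I would verify the hypotheses of Theorem~\ref{thm:stochastic_sub2zero}: $\bar\rho=2\rho\in(\rho,2\rho]$, and $\alpha_t=\alpha/\sqrt{T+1}\le 1/(2\rho)=1/\bar\rho$ since $\alpha\le 1/(2\rho)$ and $\sqrt{T+1}\ge 1$. With $\alpha_t\equiv\alpha/\sqrt{T+1}$ we get $\sum_{t=0}^{T}\alpha_t=\alpha\sqrt{T+1}$, $\sum_{t=0}^{T}\alpha_t^2=\alpha^2$, and $\bar\rho-\rho=\rho$, so after replacing $C_{1,\mu,t}\le C_1$ and $C_{2,\mu,T}\le C_2$ by $t$-independent upper bounds, \eqref{cc} becomes
\[
\EE\!\left[\|\nabla\psi_{1/(2\rho),\widehat V_{\mu,t^*}^{1/2}}(x_{t^*})\|^2\right]\le\frac{2\rho\Delta_\psi+4\rho^2(\alpha^2 C_1+C_2)}{\rho\,\alpha\sqrt{T+1}}+2\,C_{3,\mu,T}.
\]

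The next step is to produce those bounds exactly as in Corollary~\ref{cor:complexity}. From \eqref{pffy} we have $\|G_\mu(x,\xi,u)\|_\infty\le\tilde G_\infty=dL_F$, hence by Lemma~\ref{lm:infty}, $\EE[\|g_{\mu,k}\|_1]\le d\tilde G_\infty$ and $\EE[\|\widehat\upsilon_{\mu,t}^{1/2}\|_1]\le d\tilde G_\infty$ for all $t,k$; moreover $\beta_{1,t}=\beta_1\pi^{t-1}$ makes $\sum_{t=0}^{T}\beta_{1,t}^2$ a finite constant, and $\sum_{k=0}^{t}\tau^{t-k}\le 1/(1-\tau)$. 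The only ingredient not already present in the first-order proximal corollary is the factor $\lambda_{\min}(\widehat V_{\mu,t}^{1/2})^{-1}$ inside $C_{1,\mu,t}$: the recursion $\widehat\upsilon_{\mu,t}=\beta_3\widehat\upsilon_{\mu,t-1}+(1-\beta_3)\max(\widehat\upsilon_{\mu,t-1},\upsilon_{\mu,t})\ge\widehat\upsilon_{\mu,t-1}\ge\cdots\ge\widehat\upsilon_{\mu,-1}=q$ forces $\widehat V_{\mu,t}^{1/2}\succeq Q^{1/2}$, so $\lambda_{\min}(\widehat V_{\mu,t}^{1/2})$ is bounded below by the positive constant $\lambda_{\min}(Q^{1/2})$, turning that term into the constant $d\tilde G_\infty^2/\lambda_{\min}(Q^{1/2})$. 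This yields explicit $C_1$ and $C_2$ of the same shape as in the proof of Corollary~\ref{cor:complexity}. For the smoothing error, Lemma~\ref{lem16} gives $C_{3,\mu,T}\le 2\mu L_F=2dL_F/\sqrt{T+1}$ after inserting $\mu=d/\sqrt{T+1}$.

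Finally, substituting $C_1$, $C_2$, and $C_{3,\mu,T}\le 2dL_F/\sqrt{T+1}$ into the displayed inequality and minimizing $\alpha\mapsto(2\rho\Delta_\psi+4\rho^2 C_2)/(\rho\alpha)+4\rho\alpha C_1$ over $\alpha>0$ gives $\alpha^\star=\sqrt{(\Delta_\psi+2\rho C_2)/(2\rho C_1)}$ and
\[
\EE\!\left[\|\nabla\psi_{1/(2\rho),\widehat V_{\mu,t^*}^{1/2}}(x_{t^*})\|^2\right]\le\frac{4\sqrt{2\rho C_1}\sqrt{\Delta_\psi+2\rho C_2}}{\sqrt{T+1}}+\frac{4dL_F}{\sqrt{T+1}}.
\]
One should check that $\alpha^\star$ lies in $(0,1/(2\rho)]$ (otherwise clip to the endpoint, which only affects the constant), which is automatic for $T$ large. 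Since $\tilde G_\infty=dL_F$ makes $C_1$ and $C_2$ polynomial in $d$, the first term dominates and the whole bound is $O(d^2/\sqrt T)$, matching Corollary~\ref{cor:complexity}.

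I do not anticipate a real obstacle: Theorem~\ref{thm:stochastic_sub2zero} already contains all the estimation work, so the corollary is essentially a substitution followed by a one-variable minimization. The only two points requiring care are the uniform positive lower bound on the adaptive preconditioner $\widehat V_{\mu,t}^{1/2}$ (needed to neutralize the $\lambda_{\min}(\widehat V_{\mu,t}^{1/2})^{-1}$ term, and supplied by the initialization $\widehat\upsilon_{\mu,-1}=q$ together with the monotonicity of $\{\widehat\upsilon_{\mu,t}\}$), and the balancing of the smoothing radius against the stepsize so that the bias $C_{3,\mu,T}$ is of the same order $O(1/\sqrt T)$ as the optimization error — which is exactly why $\mu=d/\sqrt{T+1}$ is taken.
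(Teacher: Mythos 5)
Your proposal is correct and follows essentially the same route as the paper's proof: instantiate Theorem~\ref{thm:stochastic_sub2zero}, bound $C_{1,\mu,t}$ and $C_{2,\mu,T}$ by $T$-independent constants via \eqref{pffy} and Lemma~\ref{lm:infty} (using $\lambda_{\min}(\widehat V_{\mu,t}^{1/2})\geq\lambda_{\min}(Q^{1/2})$ from the initialization $\widehat\upsilon_{\mu,-1}=q$ and monotonicity), bound $C_{3,\mu,T}\leq 2\mu L_F$ via Lemma~\ref{lem16}, and minimize the resulting bound over $\alpha$. You are in fact slightly more careful than the paper on two immaterial points: carrying the factor $\bar\rho/(\bar\rho-\rho)=2$ on the smoothing-bias term, and noting that the optimized $\alpha^\star$ must be clipped to $(0,1/(2\rho)]$.
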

\begin{proof}
Combine Lemma~\ref{lem16} with $C_{3,\mu,T}$ defined in  \eqref{zprox:pj} to obtain
\begin{eqnarray}\label{fff}
\nonumber C_{3,\mu,T}&=&\EE \big[  \max_{t \in (T]}|f(x_t)-f_{\mu}(x_t)| \big]+\EE \big[  \max_{t \in (T]}|f_{\mu}(\bar{x}_t)-f(\bar{x}_t)| \big]\\
\nonumber
& \leq& 2 \mu L_F  \\
&=& \frac{2 d L_F}{\sqrt{T+1}}.
\end{eqnarray}
Similarly,
substituting $\beta_{1,t}=\beta_1\pi^{t-1}$ in  $C_{2,\mu,T}$ defined in \eqref{zprox:pj} and using Lemma~\ref{lm:infty} and \eqref{pffy}, we obtain
\begin{eqnarray} \label{eqn:zproxbc23}
\nonumber
C_{2,\mu,T} &=& \frac{D_\infty^2}{2} (\EE[ \|\widehat{\upsilon}_{\mu,T}^{1/2} \|_1]+\sum_{t=0}^T\beta_{1,t}^2\EE[ \|\widehat{\upsilon}_{\mu,t}^{1/2} \|_1])
 \\\nonumber&\leq& \frac{d D_{\infty}^2 \tilde{G}_{\infty}}{2}(\sum_{t=0}^T\beta_1^2 \pi^{2(t-1)}+1)
 \\&\leq& \frac{d D_{\infty}^2 \tilde{G}_{\infty}}{2}(\frac{\beta_1^2}{1-2\pi}+1)=: C_2,
\end{eqnarray}	
where $\tilde{G}_{\infty}$ is defined in \eqref{pffy}.

Also, for the term $C_{1,\mu,t}$ defined in \eqref{zprox:pj}, we have
\begin{eqnarray}\label{eqn:zproxbc12}
\nonumber
C_{1,\mu,t} &=&  \frac{ 2\sum_{k=0}^t\tau^{t-k} \EE [\|g_{\mu,k}\|_1]}{(1-\beta_1)\sqrt{(1-\beta_2)(1-\beta_3)}}+\frac{ d\tilde{G}_{\infty}}{\sqrt{(1-\beta_2)(1-\beta_3)}} + \frac{ d\tilde{G}_\infty^2}{\lambda_{\min}(\widehat{V}_{\mu,t}^{1/2})} \\
\nonumber
&\leq&  (\frac{2 \sum_{k= 0}^t \tau^{t-k}  }{(1-\beta_1) }+1)\frac{ d\tilde{G}_{\infty}}{\sqrt{(1-\beta_2)(1-\beta_3)}}  +\frac{ d \tilde{G}_\infty^2}{\lambda_{\min}(\widehat{V}_{\mu,t}^{1/2})}\\
 &\leq &  (\frac{2 }{ (1-\tau)(1-\beta_1)  }+1)\frac{ d\tilde{G}_{\infty}}{\sqrt{(1-\beta_2)(1-\beta_3)}}  +\frac{ d\tilde{G}_\infty^2}{\lambda_{\min}(Q)} =:C_1,
\end{eqnarray}
where the first inequality is obtained from Lemma~\ref{lm:infty} and \eqref{pffy}; the last inequality uses  $$\sum_{t=0}^{T} \tau^t \leq \frac{1}{1-\tau}, \qquad \text{where} \quad  \tau = \beta_1/\sqrt{\beta_2}<1. $$
Now, substituting \eqref{fff}-\eqref{eqn:zproxbc12} into \eqref{cc} and using $\alpha_t=\frac{\alpha}{\sqrt{T+1}}$, and $\bar \rho=2\rho$, we get
\begin{equation*}
\EE \left[\|\nabla \psi_{1/(2 \rho),\widehat{V}_{\mu,t^*}^{1/2}}(x_{t^*})\|^2\right] \leq  \frac{ 2\rho \Delta_\psi+4\rho^2 (\alpha^2 C_1+C_2) }{\rho \sqrt{T+1}\alpha} + \frac{2d L_F}{\sqrt{T+1}}.
\end{equation*}
Minimizing the right-hand side of above inequality
w.r.t. $\alpha$ yields the choice $\alpha=\sqrt{\frac{ \Delta_\psi +2\rho C_2}{2\rho C_1}}$. Thus,
\begin{eqnarray*}
\nonumber\EE \left[\|\nabla \psi_{1/(2 \rho),\widehat{V}_{\mu,t^*}^{1/2}}(x_{t^*})\|^2\right] & \leq  &  \frac{ 4\sqrt{2 \rho C_1}\sqrt{ \Delta_\psi+2\rho C_2}}{ \sqrt{T+1}} + \frac{2 d L_F}{\sqrt{T+1}}
\\
&= & O\Big(\frac{d^2}{\sqrt{T}} \Big).
\end{eqnarray*}
\end{proof}

\section{Experiments}\label{Experiments}

Based on the insights gained from our theorems, we now present empirical results on both synthetic and real-world data sets. For our experiments, we study the problem of robust phase retrieval \cite{shechtman2015phase} and neural networks with Exponentiated Linear Units (ELUs) activation functions \cite{clevert2015fast}, representing weakly convex settings. The performance of the proposed EMA-type algorithms, i.e., \textsc{Fema} and \textsc{Zema} are compared with the model-based \textsc{Sgd} \cite{duchi2018stochastic,davis2019stochastic} and its zeroth order variant denoted by \textsc{Z-Sgd}.

All algorithms have been implemented and run on a Mac machine equipped with a 1.8 GHz Intel Core i5 processor and 8 GB 1600 MHz DDR3 memory.

\subsection{Robust phase retrieval}

Given a set of tuples $\{(a_i,b_i)\}_{i=1}^n\subset\RR^d\times \RR$, we wish to find a vector $x\in \RR^d$ satisfying
\begin{equation*}
(a_i^\top x)^2=b_i, \quad \text{for \quad $i\in[n]$}.
\end{equation*}
This problem naturally arises in a number of real-world situations, including phase retrieval \cite{fienup1982phase,fienup1987reconstruction} and is a combinatorial problem that is, in the worst case, NP-hard \cite{fickus2014phase}. However, when the set of measurements $\{b_i\}_{i=1}^n$ is corrupted by some gross outliers, one may consider the following \textit{robust phase retrieval} objective \cite{shechtman2015phase}:
\begin{align}  \label{eqn:robust-pr}
 \min_x ~ f(x) = \frac{1}{n}\sum_{i=1}^n \underbrace{|\dotp{a_i, x}^2 - b_i|}_{:=f_{i}(x)},
\end{align}
where $f_{i}$ is a $2 \|a_i\|_2^2$-weakly convex and nonsmooth function.

In our implementation, all the datasets are generated according to the following procedure: (i) We generate standard Gaussian measurements $a_i\sim N(0,I_{d\times d})$,  for $i\in[n]$; (ii)  generate the target signal $x^*$ and initial point $x_0$ uniformly on the unit sphere; and (iii) set $b_i = \langle a_i, x^*\rangle^2$ for each $i\in[n]$. The parameters of \textsc{Fema} and \textsc{Zema} are set to: \textsc{Fema1} ($\beta_{1,t} = 0.9$, $\beta_2=\beta_3=0$); \textsc{Fema2} ($\beta_{1,t} = 0.9$, $\beta_2= 0.999$, $\beta_3 =0$); and \textsc{Fema3} ($\beta_{1,t} =\beta_3 = 0.9$, $\beta_2= 0.999$). The smoothing parameter for \textsc{Zema} is set to $\mu =10/\sqrt{ (T+1)}$.

\begin{figure}[t]
	\centering 
\begin{tabular}{c}
\includegraphics[scale=0.14]{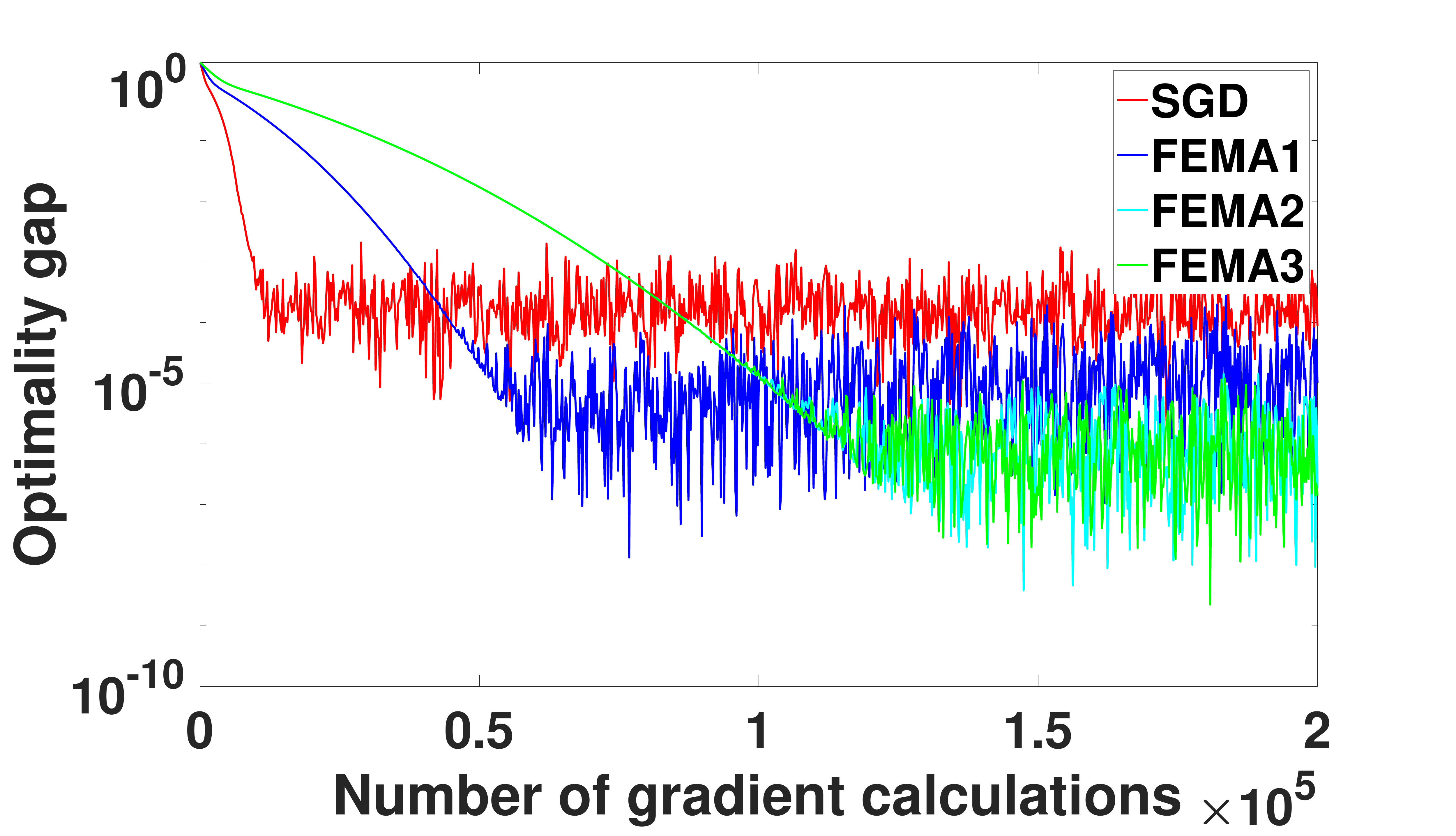}
\includegraphics[scale=0.14]{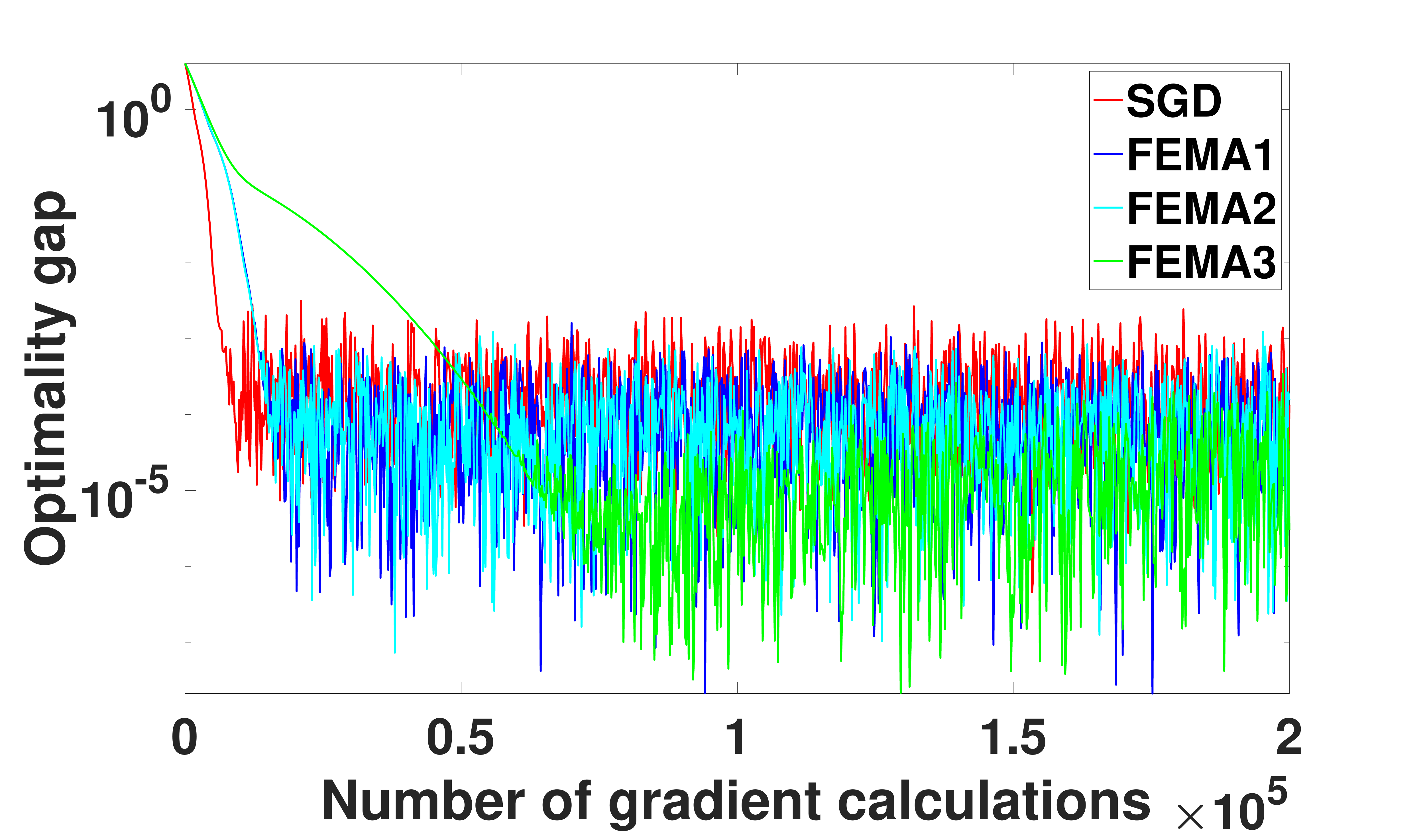}
\\
(a) Stochastic first-order projected (left) and  proximal (right) methods.
\\
\includegraphics[scale=0.14]{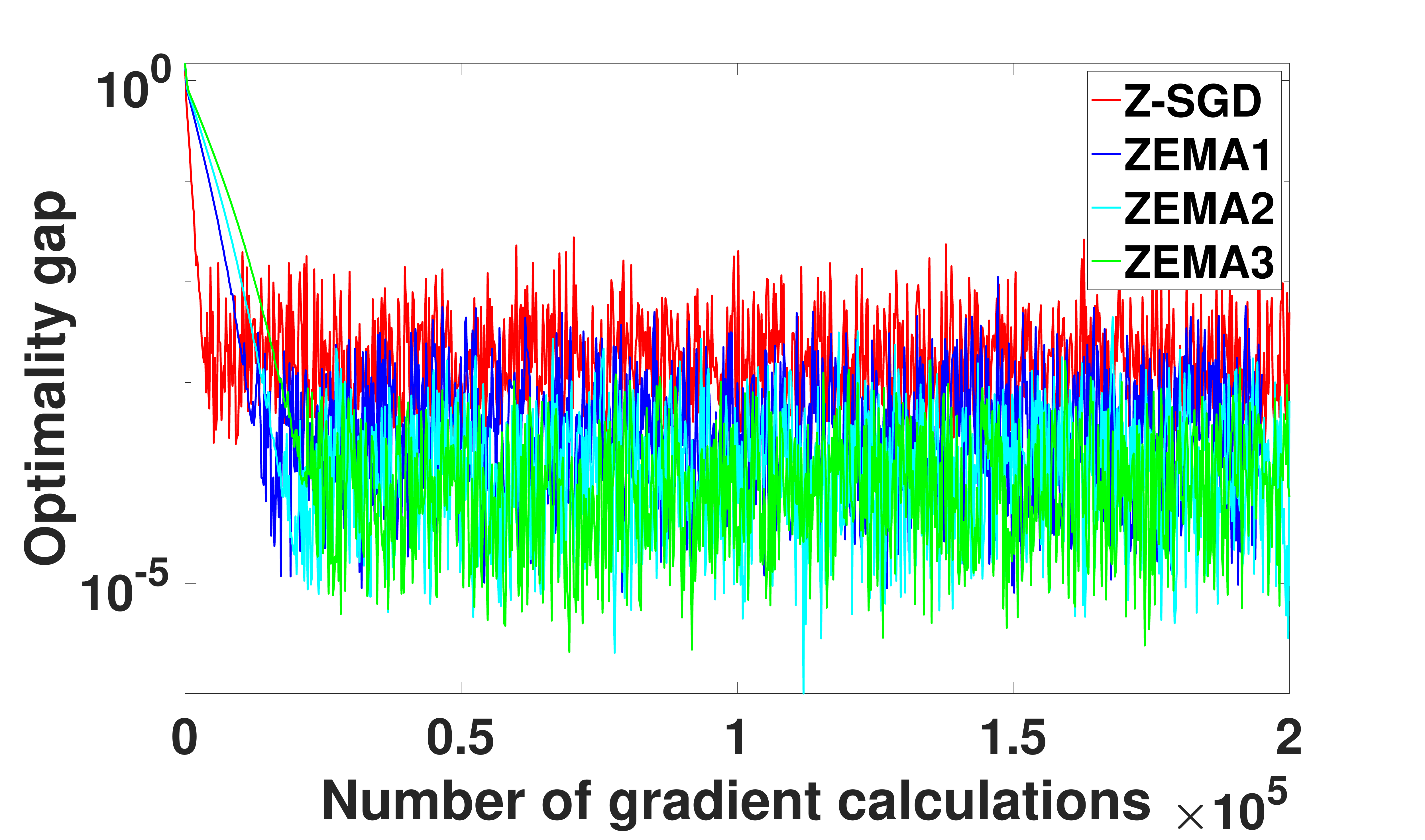}
\includegraphics[scale=0.14]{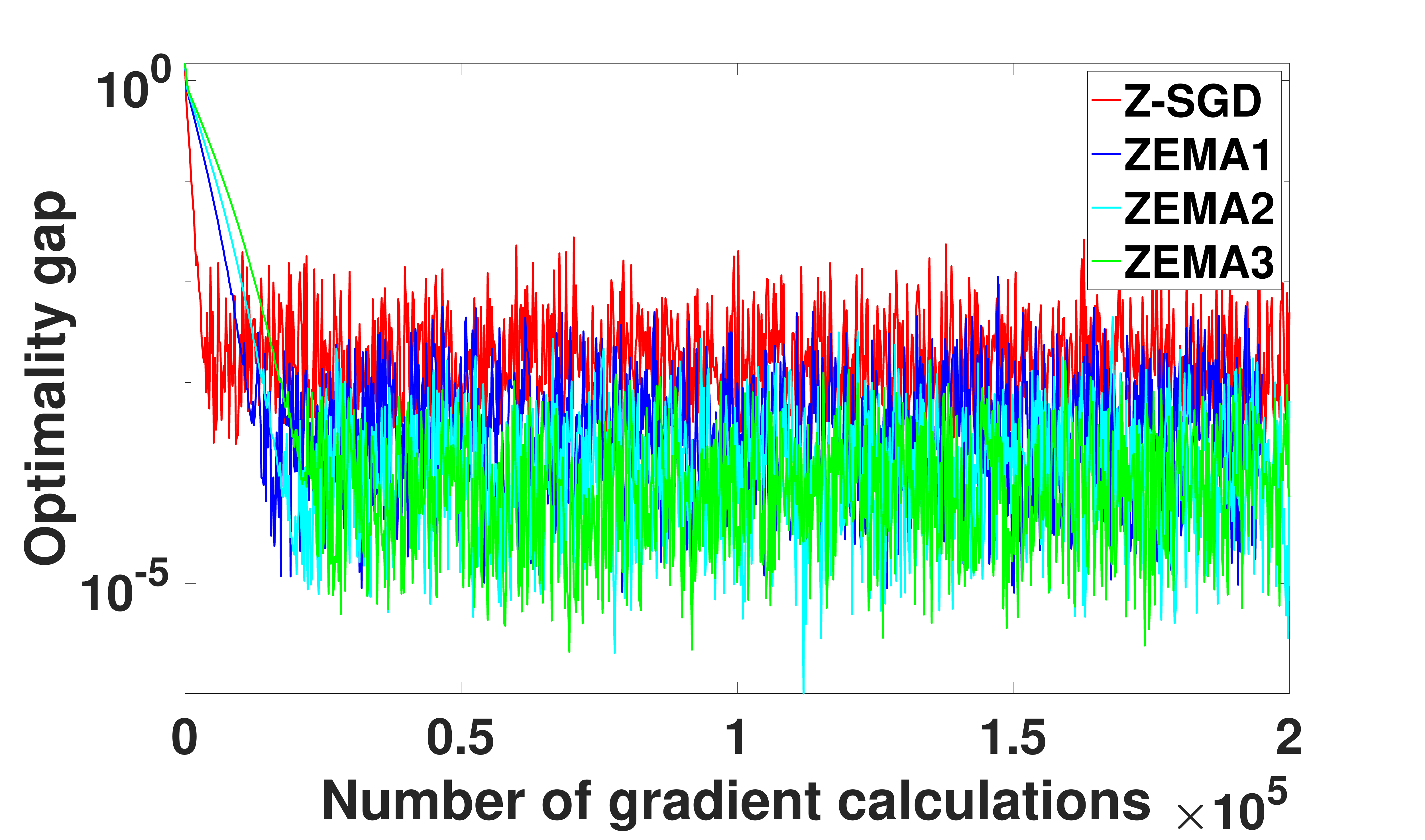}
\\
(b) Stochastic zeroth-order projected (left) and  proximal (right) methods.
 \\
\end{tabular}
\caption{Convergence of different stochastic algorithms over 500 epochs for the robust phase retrieval problem.}\label{fig:phase}
\end{figure}

We illustrate on the stochastic projected and proximal subgradient methods. We examine both cases with $d=10$ and $n= 1000$ and record the result in Figure~\ref{fig:phase}. In each set of experiments, we use $10$ equally spaced stepsize parameters $\alpha$ between $0.0001$ and $0.1$. The experiment is repeated ten times and the average accuracy and performance are depicted in Figure~\ref{fig:phase}. It is clear from the panels in Figure \ref{fig:phase} that \textsc{Fema} and \textsc{Zema} outperform \textsc{Sgd} and \textsc{Z-Sgd} algorithms in terms of the optimality gap $\Delta_{\psi}=\psi_{1/\bar \rho,\widehat{V}_{-1}^{1/2}}(x_{0}) - \psi^*$.

\subsection{Neural networks}\label{section:NN}

We present empirical results showcasing the important aspect of our framework--adaptiveness of learning rates. We test the performance of our models for training neural networks for classification tasks using the CIFAR10 and CIFAR100 datasets (60,000 $32 \times 32$ images, 10 and 100 classes, respectively) \cite{krizhevsky2009learning}. For our experiment, we use \textsc{Fema} to train ResNets110 \cite{he2016deep}, which is very popular architecture, producing state-of-the-art results across many computer vision tasks. We adopt a standard data augmentation scheme (mirroring/shifting) that is widely used.

\begin{table}[!htb]
    \begin{minipage}{.5\linewidth}
    \small{
      \centering
\begin{tabular}{lcccccc}
    \hline
Method   &  $\alpha$ & CIFAR-10  &CIFAR-100\\
    \hline
        \hline
\textsc{Sgd}   & 1e-3 & 6.89 (6.11)   & 26.81 (25.31)   \\
\textsc{Fema1}  & 1e-3 & 5.67 (5.41) & 26.59 (25.08) \\
\textsc{Fema2}  & 1e-3  &5.61(5.36)&  25.83 (25.50)\\
\textsc{Fema3}   & 1e-3 &\textbf{5.51 (5.34)} & \textbf{25.02 (24.56})\\
\hline \\
\end{tabular}
}    
    \end{minipage}%
    \begin{minipage}{.5\linewidth}
      \small{     
      \centering
\begin{tabular}{cccc}
    \hline
Method   &  $\alpha$  & CIFAR-100& CIFAR-100  \\
    \hline
        \hline
\textsc{Fema3}   &  0.0001 & 6.51 (6.00)  &   26.33 (25.49)  \\
      &  0.001    & \textbf{5.51 (5.34)} & \textbf{25.02 (24.56}) \\
    &  0.005   &  5.73 (5.60)  &   25.68 (25.17) \\
   &  0.01 & 6.74 (6.12)  &   26.20 (25.24) \\
\hline \\
\end{tabular}
      }
    \end{minipage}
        \caption{ResNets110 test error (in \%) on CIFAR-10  and CIFAR-100 datasets. All our experiments were run for 500 epochs with batch-size of 256. We report the mean error over the last 10 epochs and minimum error over all epochs (inside parenthesis) of the median error over five tries.}\label{tab:NN}
\end{table}

For our experiment, we use ELUs activation function \cite{clevert2015fast} so that the loss is weakly convex.\footnote{The loss is of composite form $f = h \circ c$, with $h$ convex and $c$ smooth.}. As seen from Table~\ref{tab:NN}, without any tuning, our default parameter setting achieves state-of-the-art results for this network. In particular, we see \textsc{Fema3} consistently outperforming other algorithms, especially on the CIFAR-100 dataset. Further, as shown in Table~\ref{tab:NN} (right), our proposed parameterization $\alpha_t=\alpha=1e-3$ obtained the best accuracy.

\section{Conclusion}\label{conclusion}
In this paper, we examined first and zeroth order adaptive methods for nonconvex \& nonsmooth optimization. We provided some mild sufficient conditions to ensure convergence of a class of adaptive algorithms, which include \textsc{Adam} and  \textsc{RMSprop} as
special cases. To the best of our knowledge, the convergence of adaptive algorithms for nonconvex and nonsmooth problems was unknown before. We also showed empirically on selected settings how adaptive algorithms can perform better than \textsc{Sgd} and its zeroth-order variants.

%
%
\bibliographystyle{abbrv}

\bibliography{ref}

\vspace{4cm}
\begin{center}
{\Large
\textsc{Appendix}
}
\end{center}



\subsubsection*{Proof of Lemma \ref{lem:weak:hyp}}
\begin{proof}
Th proof is similar to that of \cite[Theorem 3.1]{daniilidis2005filling}.  (i) $ \Rightarrow $ (ii). Since $\psi$ is $(\rho,Q)$-weakly convex, it follows from Definition~\ref{def:weak} that the function $\psi_{\rho,Q}(x):=\psi(x) + \frac{\rho}{2}\|Q^{1/2}x\|^2$ is convex. This implies that the subgradient $\partial \psi_{\rho,Q}(x)$ exists and can be computed as follows
$$\partial \psi_{\rho,Q}(x) \in \partial \psi(x)+\rho Qx.$$
Now, let $\varpi\in \partial \psi(x)$. By the convexity of $\psi_{\rho,Q}(x)$, we have
\begin{equation} \label{eqn:wcon}
 \psi_{\rho,Q}(y)\geq \psi_{\rho,Q}(x)+\langle \varpi+\rho Q x,y-x \rangle, \qquad \textnormal{for\, all\,} ~~ x,y\in \Bbb{R}^d.
\end{equation}
By \eqref{eqn:wcon}, we further have
\begin{align*}
\nonumber \psi(y) &\geq \psi(x)+\langle \varpi,y-x \rangle+\frac{\rho}{2}\|Q^{1/2}x\|^2-\frac{\rho}{2}\|Q^{1/2}y\|^2 + \langle \rho Q x,y \rangle - \langle \rho Q x,x \rangle\\
&= \psi(x)+\langle \varpi,y-x \rangle-\frac{\rho}{2}\|Q^{1/2}y\|^2-\frac{\rho}{2}\|Q^{1/2}x\|^2 + \langle \rho Q x,y \rangle \\
 &= \psi(x)+\langle \varpi,y-x \rangle-\frac{\rho}{2}\|Q^{1/2}(y-x)\|^2.
\end{align*}
(ii) $\Rightarrow$ (iii). For any $x,y\in\Bbb{R}^d$ with $\varpi\in \partial \psi(x)$ and $\omega\in \partial \psi(y)$, it follows from \eqref{eqn:stronger_ineq} that
\begin{eqnarray*}\label{dds}
  \psi(y) &\geq & \psi(x)+\langle \varpi,y-x \rangle-\frac{\rho}{2}\|Q^{1/2}(y-x) \|^2, \\
  \psi(x) &\geq & \psi(y)+\langle \omega,x-y \rangle-\frac{\rho}{2}\|Q^{1/2}(x-y) \|^2.
\end{eqnarray*}
Adding the above inequalities gives the desired result.
\\
(iii) $\Rightarrow$ (i). It follows from \eqref{eqn:stronger_ineq_hypo} that
$$\langle \omega-\varpi,x-y \rangle\geq -\rho \|Q^{1/2}(y-x)\|^2,$$
which yields
$$\langle \omega+\rho Q y - (\varpi+\rho Q x),y-x \rangle \geq 0. $$
As a result, the subdifferential of $\psi_{\rho,Q}(\cdot)$ is a globally monotone map.
Applying \cite[Theorem 12.17]{rockafellar2009variational}, we conclude that $ \psi_{\rho,Q}(\cdot)$ is convex.
\end{proof}

\begin{lem}\label{lem:spar}
\textnormal{\cite[Lemma~1]{nazari2019dadam}} Let $ 0 \leq \beta_{1,t}\leq \beta_1$, $\{\beta_i\}_{i=1}^3 \in [0,1)$ and $\tau = \beta_1/\sqrt{\beta_2}<1$. Then, for $m_{t}$ and $ \widehat{V}_t$ generated by Algorithm~\ref{alg:subgradient}, we have
\begin{align*}
 \|\widehat{V}_t^{-1/4} m_{t}\|^2 \leq   \frac{ \sum_{k=0}^t{\tau}^{t-k}  \|{g_{k}}\|_1 }{{(1-\beta_1)\sqrt{(1-\beta_2)(1-\beta_3)}}}.
\end{align*}
\end{lem}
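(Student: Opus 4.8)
The plan is to bound each coordinate's contribution to $\|\widehat{V}_t^{-1/4}m_t\|^2 = \sum_{i=1}^d (m_t)_i^2/(\widehat{\upsilon}_t)_i^{1/2}$ and then sum over $i\in[d]$. First I would unroll $m_t = \beta_{1,t}m_{t-1}+(1-\beta_{1,t})g_t$ (with $m_{-1}=0$) into $m_t = \sum_{k=0}^t\big(\prod_{j=k+1}^t\beta_{1,j}\big)(1-\beta_{1,k})g_k$; since $0\le\beta_{1,j}\le\beta_1$ and $1-\beta_{1,k}\le1$, this gives the elementwise bound $|(m_t)_i|\le\sum_{k=0}^t\beta_1^{t-k}|(g_k)_i|$. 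Applying the Cauchy--Schwarz inequality with weights $\beta_1^{t-k}$ together with $\sum_{k=0}^t\beta_1^{t-k}\le 1/(1-\beta_1)$ then yields $(m_t)_i^2\le\frac{1}{1-\beta_1}\sum_{k=0}^t\beta_1^{t-k}(g_k)_i^2$.

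Next I would produce a lower bound on $(\widehat{\upsilon}_t)_i$ that retains the geometric decay in $k$. From $\widehat{\upsilon}_t=\beta_3\widehat{\upsilon}_{t-1}+(1-\beta_3)\max(\widehat{\upsilon}_{t-1},\upsilon_t)$ one obtains $\widehat{\upsilon}_t\ge(1-\beta_3)\upsilon_t$ elementwise, and unrolling $\upsilon_t=\beta_2\upsilon_{t-1}+(1-\beta_2)g_t^2$ (with $\upsilon_{-1}=0$) gives $\upsilon_t=(1-\beta_2)\sum_{l=0}^t\beta_2^{t-l}g_l^2\ge(1-\beta_2)\beta_2^{t-k}g_k^2$ elementwise for every $k\le t$; hence $(\widehat{\upsilon}_t)_i^{1/2}\ge\sqrt{(1-\beta_2)(1-\beta_3)}\,\beta_2^{(t-k)/2}|(g_k)_i|$. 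Positivity of $(\widehat{\upsilon}_t)_i$ --- needed for the ratio to make sense --- follows from $\widehat{\upsilon}_{-1}=q\in\R^d_{++}$ and the monotonicity $\widehat{\upsilon}_t\ge\widehat{\upsilon}_{t-1}$. Combining the two estimates, for each $k$ with $(g_k)_i\ne0$ I divide the term $\beta_1^{t-k}(g_k)_i^2$ by $(\widehat{\upsilon}_t)_i^{1/2}$, using the lower bound above and $(g_k)_i^2/|(g_k)_i|=|(g_k)_i|$, to get $\beta_1^{t-k}(g_k)_i^2/(\widehat{\upsilon}_t)_i^{1/2}\le\tau^{t-k}|(g_k)_i|/\sqrt{(1-\beta_2)(1-\beta_3)}$ with $\tau=\beta_1/\sqrt{\beta_2}$ (terms with $(g_k)_i=0$ vanish on both sides). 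Summing over $k$, then over $i\in[d]$, and using $\sum_{i=1}^d|(g_k)_i|=\|g_k\|_1$, gives the stated bound.

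The computation is essentially routine; the only point that genuinely matters is the choice of lower bound on $\widehat{\upsilon}_t$. Bounding it merely by $(\widehat{\upsilon}_t)_i\ge(\widehat{\upsilon}_k)_i\ge(1-\beta_2)(1-\beta_3)(g_k)_i^2$ would leave a bare factor $\beta_1^{t-k}$ in place of $\tau^{t-k}$; it is essential to keep the weight $\beta_2^{t-k}$ arising from unrolling the second-moment accumulator $\upsilon_t$, which is precisely what pairs with $\beta_1^{t-k}$ to produce $\tau^{t-k}=(\beta_1/\sqrt{\beta_2})^{t-k}$. Note that the hypothesis $\tau<1$ is not used in this lemma itself --- it is needed only to sum the resulting geometric series in the corollaries that invoke it.
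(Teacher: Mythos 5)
Your proof is correct, and it is essentially the standard derivation of this bound (the paper itself does not prove the lemma here but imports it by citation from \cite[Lemma~1]{nazari2019dadam}, where the argument is the same): unroll $m_t$ to get $|(m_t)_i|\le\sum_{k=0}^t\beta_1^{t-k}|(g_k)_i|$, apply Cauchy--Schwarz with the geometric weights, and lower-bound $(\widehat{\upsilon}_t)_i$ by $(1-\beta_3)(1-\beta_2)\beta_2^{t-k}(g_k)_i^2$ so that the surviving ratio is $\tau^{t-k}|(g_k)_i|$. Your remarks on why one must keep the $\beta_2^{t-k}$ weight rather than a crude bound, and on positivity of $\widehat{\upsilon}_t$ via $\widehat{\upsilon}_{-1}=q\in\R^d_{++}$ and monotonicity, are exactly the right points of care; nothing is missing.
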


\begin{lem}\textnormal{\cite{nazari2019dadam}}\label{lm:infty}
Suppose Assumption~\ref{it2} holds and $\|G(x, \xi)\|_{\infty} \leq G_{\infty}$ for all $x \in \cX$ and $\xi \in \Omega$.  Then, for $g_t$,$m_t$, and $\widehat{\upsilon}_t$ defined in Algorithm \ref{alg:subgradient}, we have $\|g_t\|_\infty \leq G_\infty$, $\|\widehat{\upsilon}_t^{1/2}\|_\infty \leq G_\infty$ and $\|m_{t}\|_\infty \leq G_\infty$.
\end{lem}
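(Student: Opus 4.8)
The plan is to verify the three bounds one at a time, each by a short induction on $t$ that exploits the structure of the recursions in Algorithm~\ref{alg:subgradient}; since squaring, square roots and the $\max$ operator all act coordinatewise, it suffices to reason coordinate by coordinate throughout.

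The bound $\|g_t\|_\infty\le G_\infty$ is immediate, being just a restatement of the oracle hypothesis $\|G(x,\xi)\|_\infty\le G_\infty$ at the point $g_t=G(x_t,\xi_t)$. For $m_t$ I would induct on $t$: the base case $m_{-1}=0$ is trivial, and since $m_t=\beta_{1,t}m_{t-1}+(1-\beta_{1,t})g_t$ is a convex combination (as $\beta_{1,t}\in[0,1)$), the triangle inequality together with the inductive hypothesis and the previous bound gives $\|m_t\|_\infty\le\beta_{1,t}\|m_{t-1}\|_\infty+(1-\beta_{1,t})\|g_t\|_\infty\le G_\infty$. Equivalently, one may unroll the recursion into $m_t=\sum_{k=0}^t\big(\prod_{j=k+1}^t\beta_{1,j}\big)(1-\beta_{1,k})g_k$ and note that the coefficients are nonnegative and telescope to $1-\prod_{j=0}^t\beta_{1,j}\le 1$.

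For the remaining bound I would first establish $\|\upsilon_t\|_\infty\le G_\infty^2$ by the same convex-combination induction applied to $\upsilon_t=\beta_2\upsilon_{t-1}+(1-\beta_2)g_t^2$, starting from $\upsilon_{-1}=0$ and using $\|g_t^2\|_\infty=\|g_t\|_\infty^2\le G_\infty^2$. A second induction then handles $\widehat\upsilon_t$: if $\widehat\upsilon_{t-1}$ and $\upsilon_t$ are each at most $G_\infty^2$ in every coordinate, then so is $\max(\widehat\upsilon_{t-1},\upsilon_t)$, and hence so is $\widehat\upsilon_t=\beta_3\widehat\upsilon_{t-1}+(1-\beta_3)\max(\widehat\upsilon_{t-1},\upsilon_t)$; taking square roots coordinatewise yields $\|\widehat\upsilon_t^{1/2}\|_\infty\le G_\infty$.

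This argument is entirely elementary, so I do not expect a genuine obstacle; the only points needing attention are keeping the $\max$ operator consistent with the coordinatewise inequalities, and the base case of the last induction. Indeed, Algorithm~\ref{alg:subgradient} initializes $\widehat\upsilon_{-1}=q$ rather than at $0$, so to obtain the stated uniform bound $G_\infty$ one should record the normalization $\|q\|_\infty\le G_\infty^2$ (automatic for the standard choice $q=0$); without it the bound degrades to $\max\{G_\infty,\|q\|_\infty^{1/2}\}$.
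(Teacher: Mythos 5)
Your proof is correct, and it is the standard convex-combination induction that the paper itself does not reproduce (Lemma~\ref{lm:infty} is simply imported by citation from \cite{nazari2019dadam}). Your closing caveat is well taken and worth keeping: Algorithm~\ref{alg:subgradient} initializes $\widehat{\upsilon}_{-1}=q$ with $q\in\R^d_{++}$ rather than $0$, so the stated bound $\|\widehat{\upsilon}_t^{1/2}\|_\infty\le G_\infty$ genuinely requires $\|q\|_\infty\le G_\infty^2$ (or the weaker conclusion $\max\{G_\infty,\|q\|_\infty^{1/2}\}$), a hypothesis the paper leaves implicit.
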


\begin{lem}\label{as} \textnormal {\cite[Lemma 6.3.a]{gao2018information}}
Let $u$  be a $d$-dimensional random vector drawn uniformly from the sphere of a unit ball $B$ . Then,
\begin{equation*}
\frac{1}{\mathcal{V}(d)}\int_{u\in B}\|u\|^pdu=\frac{d}{d+p}, \quad \textnormal{and} \quad \int_{u\in B}u u^\top du=\frac{\mathcal{V}(d)}{d}I.
\end{equation*}
 Here, $\mathcal{V}(d)$ denotes the the volume of the unit ball in $\Bbb{R}^d$ and $I$ is the identity matrix in $\Bbb{R}^{d\times d}$.
\end{lem}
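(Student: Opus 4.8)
The plan is to reduce both identities to elementary one-variable integrals by passing to spherical coordinates, and then to use the orthogonal invariance of $B$ to dispatch the matrix-valued one.

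For the first identity, I would write $u=r\omega$ with $r=\|u\|\in[0,1]$ and $\omega$ ranging over the unit sphere $S^{d-1}$, so that Lebesgue measure factors as $du=r^{d-1}\,dr\,d\sigma(\omega)$, where $\sigma$ is the surface measure on $S^{d-1}$. Then
\[
\int_{u\in B}\|u\|^p\,du=\sigma(S^{d-1})\int_0^1 r^{p+d-1}\,dr=\frac{\sigma(S^{d-1})}{p+d}.
\]
Specializing to $p=0$ identifies $\mathcal{V}(d)=\sigma(S^{d-1})/d$, i.e.\ $\sigma(S^{d-1})=d\,\mathcal{V}(d)$; substituting this back and dividing by $\mathcal{V}(d)$ gives $\tfrac{1}{\mathcal{V}(d)}\int_{u\in B}\|u\|^p\,du=\tfrac{d}{d+p}$.

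For the matrix identity, the key point is that $B$ together with Lebesgue measure is invariant under every orthogonal transformation $O$, so the change of variables $u\mapsto Ou$ yields $\int_B uu^\top\,du=O\big(\int_B uu^\top\,du\big)O^\top$. A symmetric matrix that commutes with all orthogonal transformations is a scalar multiple $cI$ of the identity — apply this to coordinate reflections to annihilate the off-diagonal entries and to $90^\circ$ rotations in the coordinate planes to equalize the diagonal ones. Taking traces and invoking the first identity with $p=2$,
\[
c\,d=\operatorname{tr}\!\int_B uu^\top\,du=\int_B\|u\|^2\,du=\frac{\sigma(S^{d-1})}{d+2}=\frac{d\,\mathcal{V}(d)}{d+2},
\]
so $\int_B uu^\top\,du=\tfrac{\mathcal{V}(d)}{d+2}I$, which is the asserted identity up to the normalization convention for $B$ and $\mathcal{V}(d)$.

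Both steps are routine, so I do not expect a genuine obstacle; the only thing to watch is the bookkeeping of the constant multiplying $I$, which depends on whether $B$ is read as the solid unit ball (as the factor $\mathcal{V}(d)$ and the domain $\{u\in B\}$ suggest) or as the unit sphere, a convention inherited from \cite{gao2018information}. The spherical-coordinates computation and the symmetry argument are insensitive to this choice; only the final value of $c$ must be matched to it.
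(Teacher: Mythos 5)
The paper does not prove this lemma at all; it is imported verbatim from \cite{gao2018information}, so there is no in-paper argument to compare against. Your derivation is the standard one and both halves of it are mathematically sound: the polar-coordinates computation correctly gives $\tfrac{1}{\mathcal V(d)}\int_B\|u\|^p\,du=\tfrac{d}{d+p}$ for the \emph{solid} ball, and the orthogonal-invariance argument correctly forces $\int uu^\top$ to be a scalar multiple of $I$ with the scalar determined by the trace.

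The one point you should not leave as a loose end is the constant in the second identity, because your own computation shows the statement cannot be read with a single convention. Under the solid-ball reading (the only one for which the first identity is true), the trace is $\int_B\|u\|^2\,du=\tfrac{d\,\mathcal V(d)}{d+2}$ and you correctly obtain $\tfrac{\mathcal V(d)}{d+2}I$ --- which contradicts the asserted $\tfrac{\mathcal V(d)}{d}I$, so ``up to normalization'' is too generous: the displayed constant is simply wrong for that reading. The asserted constant is recovered only under the sphere reading suggested by the lemma's own wording (``drawn uniformly from the sphere''): if $\|u\|=1$ almost surely and the measure has total mass $\mathcal V(d)$, then $\operatorname{tr}\int uu^\top\,d\mu=\int\|u\|^2\,d\mu=\mathcal V(d)$ and the same symmetry argument gives $\tfrac{\mathcal V(d)}{d}I$. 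This is in fact how the lemma is used downstream (the smoothing $f_\mu$ in \eqref{rand_smooth_func} averages over the ball, while the estimator \eqref{ppl} samples directions for which the second-moment identity is a sphere statement), so the two identities genuinely live on different domains. Your proof would be complete and correct if you ran the trace argument once on the sphere with $\|u\|\equiv 1$ for the second identity, rather than on the ball, and stated explicitly that the first identity requires the ball.
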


\end{document}